\numberwithin{equation}{section} 
\setlist[enumerate,1]{label = (\alph*),leftmargin=*, topsep=1mm, itemsep=1mm}
\setlist[enumerate,2]{label = (\roman*),leftmargin=*, topsep=1mm, itemsep=1mm}
\setlist[itemize,1]{leftmargin=*, topsep=1mm, itemsep=1mm}
\renewcommand{\mathbb}{\mathds}
\tikzset{
	marrow/.style={decoration={markings,mark=at position 0.75 with {\arrow{#1}}}, postaction=decorate}
}
\tikzset{epi/.code={\pgfsetarrowsend{Computer Modern Rightarrow[width=5pt, length=3pt] Computer Modern Rightarrow[width=5pt, length=3pt]}}}
\tikzset{epi_mini/.code={\pgfsetarrowsend{Computer Modern Rightarrow[width=5pt, length=3pt, scale=0.85] Computer Modern Rightarrow[width=5pt, length=3pt, scale=0.85]}}}
\theoremstyle{plain}
\newtheorem{thm}{Theorem}[section]
\newtheorem{prp}[thm]{Proposition}
\newtheorem{cor}[thm]{Corollary}
\newtheorem{lem}[thm]{Lemma}
\newtheorem{thmA}{Theorem}
\theoremstyle{definition}
\newtheorem{dfn}[thm]{Definition}
\newtheorem{ntn}[thm]{Notation}
\newtheorem{con}[thm]{Construction}
\newtheorem{asn}[thm]{Assumption}
\theoremstyle{remark}
\newtheorem{rmk}[thm]{Remark}
\Crefname{thm}{Theorem}{Theorems}
\Crefname{thmA}{Theorem}{Theorems}
\Crefname{prp}{Proposition}{Propositions}
\Crefname{cor}{Corollary}{Corollaries}
\Crefname{lem}{Lemma}{Lemmas}
\Crefname{cnj}{Conjecture}{Conjectures}
\Crefname{dfn}{Definition}{Definitions}
\Crefname{ntn}{Notation}{Notations}
\Crefname{con}{Construction}{Constructions}
\Crefname{asn}{Assumption}{Assumptions}
\Crefname{cnv}{Convention}{Conventions}
\Crefname{rmk}{Remark}{Remarks}
\Crefname{exa}{Example}{Examples}
\newcommand{\NN}{\mathbb{N}}
\newcommand{\A}{\mathcal{A}}
\newcommand{\B}{\mathcal{B}}
\newcommand{\C}{\mathcal{C}}
\newcommand{\E}{\mathcal{E}}
\newcommand{\F}{\mathcal{F}}
\newcommand{\cI}{\mathcal{I}}
\renewcommand{\L}{\mathcal{L}}
\newcommand{\N}{\mathcal{N}}
\newcommand{\cS}{\mathcal{S}}
\newcommand{\T}{\mathcal{T}}
\newcommand{\mMor}{\mm\Mor}
\newcommand{\smMor}{\sm \Mor}
\newcommand{\ul}{\underline}
\newcommand{\ol}{\overline}
\newcommand{\sm}[1]{{#1}^\textup{sm}}
\newcommand{\mm}[1]{{#1}^\textup{m}}
\newcommand{\epic}[2]{\begin{tikzcd}[cramped, sep=small, ampersand replacement=\&] #1 \ar[r, two heads] \& #2 \end{tikzcd}}
\newcommand{\monic}[2]{\begin{tikzcd}[cramped, sep=small, ampersand replacement=\&] #1 \ar[r, tail] \& #2 \end{tikzcd}}
\newcommand{\inj}[2]{\begin{tikzcd}[cramped, sep=small, ampersand replacement=\&] #1 \ar[r, hook] \& #2 \end{tikzcd}}
\DeclareMathOperator{\id}{id}
\DeclareMathOperator{\cok}{cok}
\DeclareMathOperator{\Hom}{Hom}
\DeclareMathOperator{\Ext}{Ext}
\DeclareMathOperator{\Mor}{Mor}
\DeclareMathOperator{\Proj}{Proj}
\DeclareMathOperator{\Inj}{Inj}
\DeclareMathOperator{\gdim}{gdim}
\DeclareMathOperator{\Fac}{Fac}
\DeclareMathOperator{\MF}{MF}
\DeclareMathOperator{\MCM}{MCM}
\DeclareMathOperator{\GProj}{Gproj}
\title[Categorical matrix factorizations and monomorphism categories]{Categorical matrix factorizations\\and monomorphism categories}
\author[J.~Frank]{Jonas Frank}
\address{\linebreak
	Jonas Frank\\
	Department of Mathematics\\
	RPTU University Kaiserslautern-Landau\\ 
	67663 Kaiserslautern\\
	Germany
}
\email{\href{jfrank@rptu.de}{jfrank@rptu.de}}
\author[M.~Schulze]{Mathias Schulze}
\address{\linebreak
	Mathias Schulze\\
	Department of Mathematics\\
	RPTU University Kaiserslautern-Landau\\ 
	67663 Kaiserslautern\\
	Germany
}
\email{\href{mschulze@rptu.de}{mschulze@rptu.de}}
\subjclass[2020]{Primary 18G50, 18G65; Secondary 18G80, 16E65}
\keywords{Matrix factorization, monomorphism category, triangulated category, Cohen-Macaulay, Gorenstein}
\begin{document}
	

\begin{abstract}
	This article generalizes the correspondence between matrix factorizations and maximal Cohen-Macaulay modules over hypersurface rings due to Eisenbud and Yoshino. We consider factorizations with several factors in a purely categorical context, extending results of Sun and Zhang for Gorenstein projective module factorizations. Our formulation relies on a notion of hypersurface category and replaces Gorenstein projectives by objects of general Frobenius exact subcategories. We show that factorizations over such categories form again a Frobenius category. Our main result is then a triangle equivalence between the stable category of factorizations and that of chains of monomorphisms.
\end{abstract}

\maketitle

\tableofcontents

\section{Introduction}

Eisenbud {\cite{Eis80}} showed that any minimal free resolution over a hypersurface ring $R=S/\langle f \rangle$ becomes 2-periodic after $\dim(S)$ many steps. He establishes a correspondence between the isomorphism classes of such minimal 2-periodic resolutions, maximal Cohen-Macaulay (MCM) modules over $R$ without free summands, and equivalence classes of reduced matrix factorizations of $f$: A \emph{matrix factorization} of a non-zero divisor $f$ of a regular local ring $S$ is a pair $(\alpha, \beta)$ of homomorphisms $\alpha, \beta \colon S^m \to S^m$ of free $S$-modules of the same rank $m$ such that $\alpha \circ \beta = \beta \circ \alpha = f \cdot \id_{S^m}$. It gives rise to a 2-periodic free resolution \[\begin{tikzcd} \cdots \ar[r, "\ol \beta"] & R^m \ar[r, "\ol \alpha"] & R^m \ar[r, "\ol \beta"] & R^m \ar[r, "\ol \alpha"] & R^m \ar[r] & \cok(\alpha) \ar[r] & 0 \end{tikzcd}\]
of the cokernel of $\alpha$, which is an MCM $R$-module.\\
Yoshino {\cite{Yos90}} gave a first categorical formulation of Eisenbud's matrix factorization theorem: The cokernel functor from the category $\MF_S(f)$ of matrix factorizations induces respective equivalences \[\MF_S(f)/\langle (\id_S, f) \rangle \simeq \MCM(R), \; (\alpha, \beta) \mapsto \cok(\alpha),\] to the category of MCM $R$-modules and $\MF_S(f)/\langle (\id_S, f), (f, \id_S) \rangle \simeq \ul{\MCM}(R)$ to its stable category.\\
Several authors {\cite{BHS88,HUB91,Tri21,LT23}} studied matrix factorizations with $n \geq 2$ factors. Tribone {\cite{Tri21}} showed that the $n$-fold matrix factorizations of $f$ form a Frobenius category $\MF^n_S(f)$ and described the projective-injectives. As a result, its stable category $\ul{\MF}^n_S(f)$ is triangulated due to Happel {\cite{Hap88}}. Yoshino's second equivalence then becomes a triangle equivalence \[\ul{\MF}_S(f) \simeq \ul{\MCM}(R).\]
Recent generalizations of this equivalence rely on the fact that Gorenstein projective (aka totally reflexive) modules over Gorenstein rings are MCM modules, which, in turn, are free over regular local rings: Chen {\cite{Che25}} for $n=2$ and Sun and Zhang {\cite{SZ26}} for general $n$ consider a regular, normal element $\omega$ of a left Noetherian ring $A$. This gives rise to an autoequivalence $\tau$ of the category of left $A$-modules, defined by $\tau(a) \omega = \omega a$ for $a \in A$. They consider an instance ${}^0{\Fac}^{\GProj(A)}_{n+1}(\omega)$ of the generalized factorization category, introduced by Bergh and Jorgensen {\cite{BJ23}}, where $\GProj(A)$ is the category of Gorenstein projective left $A$-modules. It consists of sequences
	\[\begin{tikzcd}
	X^0 \ar[r, hook, "\alpha^0"] & X^1 \ar[r, hook, "\alpha^1"] & \cdots \ar[r, hook, "\alpha^{n-1}"] & X^n \ar[r, hook, "\alpha^n"] & {}^\tau{\left(X^0\right)}
\end{tikzcd}\]
of left $A$-module monomorphisms composing to $\alpha^n \cdots \alpha^0 = \omega$, where $X^k \in \GProj(A)$ for $k \in \{0, \dots, n-1\}$, $X^n \in \Proj(A)$, and ${}^\tau{\left(X^0\right)}$ has a twisted left $A$-module structure. The role of $\MCM(R)$ is played by the category $\mMor_{n-1}(\GProj(A/\langle \omega \rangle))$ of chains of $n-1$ monomorphism of Gorenstein projective left $A/\langle \omega \rangle$-modules. The authors establish a triangle equivalence 
\[{}^0{\ul{\Fac}}^{\GProj(A)}_{n+1}(\omega) \simeq \ul \mMor_{n-1}(\GProj(A/\langle \omega \rangle))\]
of stable categories, induced by a generalized \emph{cokernel functor}.

\smallskip

In this article, the Eisenbud-Yoshino theorem is phrased in purely categorical terms, isolating the essential hypotheses used for its proof. To this end, we replace the category of left $A$-modules by a general exact category $\A$. We introduce the \emph{hypersurface category} $\A/\omega$ with respect to a \textit{twisted homothety} $(\tau, \omega)$ on $\A$ to mimic the category of left $A/\langle \omega \rangle$-modules. It is given by an additive automorphism $\tau$ of $\A$, which preserves and reflects short exact sequences, and a natural transformation $\omega \colon \id_\A \to \tau$ such that $\omega \tau = \tau \omega$. The roles of $\GProj(A)$ and $\GProj(A/\langle \omega \rangle)$ are played by respective fully exact subcategories $\E=\tau \E$ of $\A$ and $\E_\omega$ of $\A/\omega$, subject to a list of conditions:

\begin{asn} \label{asn} \
	\begin{enumerate}
		\item \label{asn-coker} If \begin{tikzcd}[cramped, sep=small]
			X \ar[r, tail] & Y \ar[r, two heads] & Z
		\end{tikzcd} is any short exact sequence in $\A$, then $X,Y \in \E$ and $Z \in \A/\omega$ implies $Z \in \E_\omega$. 
		
		\item \label{asn-ker} If \begin{tikzcd}[cramped, sep=small]
			X \ar[r, tail] & Y \ar[r, two heads] & Z
		\end{tikzcd} is any short exact sequence in $\A$, then $X \in \E$ follows from $Y \in \E$ and $Z \in \E_\omega$.
		
		\item \label{asn-epic} Every $Z \in \E_\omega$ admits an admissible epic $\epic Y Z$ in $\A$ with $Y \in \E$.
	\end{enumerate}
\end{asn}

While \Cref{asn}.\ref{asn-epic} holds trivially in the module case, \ref{asn-coker} and \ref{asn-ker} represent a consequence of the \emph{change of rings formula} for Gorenstein dimensions. The \emph{Gorenstein dimension} $\gdim(M)$ of a module $M$ is the minimal length of a resolution of $M$ by Gorenstein projectives.

\begin{thm}[Change of Rings, {\cite[Thm.~2.2.8]{Chr00}}] \label{thm: CoR}
	Let $S$ be a commutative ring and $R = S/\langle f_1, \dots, f_k \rangle$, where $f_1, \dots, f_k$ is an $S$-regular sequence. Then $\gdim_S(M) = \gdim_{R}(M)+k$ for any finitely generated $R$-module $M$. 
\end{thm}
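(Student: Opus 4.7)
The plan is to reduce to the case $k = 1$ by induction on $k$ and then establish the identity $\gdim_S(M) = \gdim_R(M) + 1$ for a single regular element. For the reduction, if $f_1, \ldots, f_k$ is $S$-regular then the residues of $f_2, \ldots, f_k$ form an $(S/\langle f_1\rangle)$-regular sequence with $R = (S/\langle f_1\rangle)/\langle f_2, \ldots, f_k\rangle$, and applying the inductive hypothesis first over $S/\langle f_1\rangle$ and then the base case over $S$ yields the telescoping formula.

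For the base case $k=1$, let $f \in S$ be a non-zero-divisor. The sequence $0 \to S \xrightarrow{f} S \to R \to 0$ is a length-one projective $S$-resolution of $R$; lifting a free cover from $R$ to $S$ then produces, for any projective $R$-module $P$, a length-one projective $S$-resolution of $P$, so $\gdim_S(P) \leq 1$, with equality whenever $P \neq 0$ (an $S$-projective module annihilated by $f$ must vanish).

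The crucial step is to show that every non-zero Gorenstein projective $R$-module $G$ satisfies $\gdim_S(G) = 1$. Starting from a totally acyclic complex $P_\bullet$ of finitely generated projective $R$-modules with $G$ a chosen syzygy, I would lift each $P_i$ to a length-one projective $S$-resolution and, via a Horseshoe-type assembly, splice these into a totally acyclic complex of projective $S$-modules exhibiting $G$ as a syzygy over $S$. The Ext-vanishing $\Ext^{>0}_R(G, R) = 0$, combined with the change-of-rings isomorphism for $\Ext_S(-, S)$, ensures the required acyclicity of the $S$-dual complex.

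Granting this lemma, the inequality $\gdim_S(M) \leq \gdim_R(M) + 1$ follows by splicing a finite Gorenstein projective $R$-resolution of $M$ with the length-one $S$-resolutions of its terms. The reverse inequality is the main obstacle: one must argue that vanishing of $\Ext^{>n}_S(M, S)$ forces vanishing of $\Ext^{>n-1}_R(M, R)$, which, combined with the Auslander-Bridger criterion and the change-of-rings spectral sequence for Ext along $S \to R$, lets one construct an $R$-Gorenstein projective resolution one step shorter than the given $S$-resolution. Most of the technical bookkeeping lives in this step, and it is there that working with finitely generated modules (so that minimal resolutions and Auslander transposes are available) is genuinely required.
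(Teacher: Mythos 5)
The paper does not prove this statement: it is quoted with a citation to Christensen's monograph and used only as background motivation for \Cref{asn}. So there is no in-paper argument to compare against, and the sketch has to be judged on its own.

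The overall scaffold (induction on $k$, the computation $\gdim_S(P)=1$ for nonzero projective $R$-modules, and the splicing step for the inequality $\gdim_S(M)\le\gdim_R(M)+1$) is sound, but the central lemma as you describe it is internally inconsistent. You claim to lift a totally acyclic complex of projective $R$-modules to a totally acyclic complex of projective $S$-modules ``exhibiting $G$ as a syzygy over $S$.'' A syzygy of a totally acyclic complex of $S$-projectives is by definition Gorenstein projective over $S$, i.e.\ $\gdim_S(G)=0$; that contradicts the target $\gdim_S(G)=1$, and is in fact false whenever $G\ne 0$, since a nonzero totally reflexive $S$-module embeds in a projective $S$-module, on which the non-zero-divisor $f$ acts injectively, whereas $fG=0$. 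What the lemma actually requires is a short exact sequence $0\to G'\to Q\to G\to 0$ over $S$ with $Q\in\Proj(S)$ and $G'$ Gorenstein projective over $S$, and the ``Horseshoe-type assembly'' does not deliver it: the Horseshoe Lemma resolves the middle term of a short exact sequence from resolutions of its ends, and there is no direct way to splice length-one $S$-resolutions of the terms $P_i$ of an $R$-complex into a single acyclic $S$-complex (at best one gets a double complex whose squares commute only up to homotopy). Establishing that the kernel $G'$ is totally reflexive over $S$ is the genuine technical content here; it hinges on the Rees change-of-rings isomorphism $\Ext^i_S(M,S)\cong\Ext^{i-1}_R(M,R)$ for $i\ge 1$ together with the Auslander--Bridger reflexivity criterion, and your sketch gestures at these without carrying them through. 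The reverse inequality has the same issue: $\Ext$-vanishing alone does not bound Gorenstein dimension; one also needs reflexivity of a syzygy and $\Ext$-vanishing of its dual, so ``spectral sequence plus Auslander--Bridger'' has to be made precise before it counts as an argument.
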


To make the link with our assumptions, consider a short exact sequence
\begin{center}
	\begin{tikzcd}
		0 \ar[r] & K \ar[r] & M \ar[r] & N \ar[r] & 0
	\end{tikzcd}
\end{center}
of $S$-modules where $M$ is Gorenstein projective over $S$ and $N$ an $R$-module. Then $K$ is Gorenstein projective over $S$ if and only if $\gdim_S(N)=1$, see {\cite[Thm.~1.2.7]{Chr00}}. By \Cref{thm: CoR}, this latter condition is equivalent to $\gdim_R(N)=0$, or to $N$ being Gorenstein projective over $R$.\\
The \emph{monomorphism category} $\mMor_{l}(\A)$ consists of chains 
\[
\begin{tikzcd}
	X = (X, \alpha) \colon \; X^0 \ar[r, tail, "\alpha^0"] & X^1 \ar[r, tail, "\alpha^1"] & \cdots \ar[r, tail, "\alpha^{l-1}"] & X^l
\end{tikzcd}
\]
of $l$ monomorphisms in $\A$. In our setting, an \emph{$\E$-factorization} of $\omega$ with $l+1$ factors is an object $X \in \mMor_{l}(\A)$, where $X^0, \dots, X^l \in \E$ and $\omega_{X^l}$ factors through $\tau(\alpha^{l-1} \cdots \alpha^0)$:
\begin{center}
	\begin{tikzcd}[sep=large]
		&&&& X^l \ar[d, "\omega_{X_l}"] \ar[dllll, dashed, "\alpha^l"', bend right=3ex] \\
		\tau X^{0} \ar[r, tail, "\tau \alpha^{0}"] & \tau X^{1} \ar[r, tail, "\tau \alpha^{1}"] & \cdots \ar[r, tail, "\tau \alpha^{l-2}"] & \tau X^{l-1} \ar[r, tail, "\tau \alpha^{l-1}"] & \tau X^l.
	\end{tikzcd}
\end{center}
We denote by $\Fac^\E_{l+1}(\omega)$ the exact category of such factorizations, by ${}^0{\Fac}^\E_{l+1}(\omega)$ its fully exact subcategory of those $X$, where $X^l \in \Proj(\A)$, see \Cref{lem: Fact-fully-exact}.\\
We call $\omega$ \emph{regular} on $\E$ if $\omega_A$ is a monic for all $A \in\E$. In this case, there are \emph{trivial factorizations}
\begin{center}
	\begin{tikzcd}
		\nu^k(A)=(\nu^k(A), \alpha)\colon A \ar[r, equal] & \cdots \ar[r, equal] & A \ar[r, tail, "\omega_A"] & \tau A \ar[r, equal] & \cdots \ar[r, equal] & \tau A
	\end{tikzcd}
\end{center}
in $\Fac^\E_{l+1}(\omega)$, where $\alpha^{k} = \omega_A$, for all $A \in \E$ and $k \in \{0,\dots, l\}$. These relate closely to the projectives and injectives:

\begin{thmA} \label{thmA: Fac-Frobenius}
	Suppose that $\A$ is weakly idempotent complete and $(\tau, \omega)$ regular on $\E$.
	\begin{enumerate}
		\item Suppose that $\E$ has enough projectives. Then $\Fac^\E_{l+1}(\omega)$ has enough projectives. These are the direct summands of direct sums of objects of the form $\nu^k(P) \in \Fac^\E_{l+1}(\omega)$, where $P \in \Proj(\E)$ and $k \in \{0, \dots, l\}$. The same statements hold verbatim for injectives.
		
		\item Suppose that $\E$ is Frobenius. Then $\Fac^\E_{l+1}(\omega)$ is a Frobenius category. If $\E$ has enough $\A$-projectives, then ${}^0{\Fac}^\E_{l+1}(\omega)$ a sub-Frobenius category with the same projective-injectives.
		In particular, there is a fully faithful triangle functor ${}^0{\ul{\Fac}}^\E_{l+1}(\omega) \hookrightarrow \ul\Fac^\E_{l+1}(\omega)$.
	\end{enumerate}
\end{thmA}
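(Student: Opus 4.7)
My strategy is to establish, for each $k\in\{0,\dots,l\}$, two adjunctions for the trivial factorization functor $\nu^k$, and to exploit exactness of the adjoints to identify projectives and injectives of $\Fac:=\Fac^\E_{l+1}(\omega)$. Unpacking a morphism $f\colon X\to\nu^k(A)$ componentwise, the commutativity squares away from position $k$ force $f^i=f^{i+1}\alpha^i_X$, at position $k$ they read $\omega_A f^k=f^{k+1}\alpha^k_X$, and the wrap-around gives $f^l=\tau(f^0)\alpha^l_X$. Using naturality of $\omega$ together with the cyclic identity $\tau(\alpha^{k-1}\cdots\alpha^0)\,\alpha^l\alpha^{l-1}\cdots\alpha^k=\omega_{X^k}$ (an immediate consequence of the factorization condition), one sees that $f$ is determined by $f^k$ alone, yielding
\[
\Hom_\Fac(X,\nu^k(A))\;\cong\;\Hom_\A(X^k,A),
\]
i.e.\ $\mathrm{ev}^k\dashv\nu^k$. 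A symmetric unpacking of $f\colon\nu^k(A)\to X$, treating the wrap-around $\alpha^l_X\colon X^l\to\tau X^0$ as a structural map at a cyclic extra position ($X^{l+1}:=\tau X^0$), identifies $f$ with a single map $\tau A\to X^{k+1}$, giving $\nu^k\dashv R^k$ for $R^k(X):=\tau^{-1}(X^{k+1})$. Both $\mathrm{ev}^k$ and $R^k$ are exact, since admissible epics and monics in $\Fac$ are componentwise and $\tau$ preserves short exact sequences; hence $\nu^k$ preserves both projectives and injectives.

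To prove enough projectives in $\Fac^\E$, I would, for $X\in\Fac^\E$ and each $k$, choose an admissible epic $P_k\twoheadrightarrow R^k(X)$ with $P_k\in\Proj(\E)$, and transpose under $\nu^k\dashv R^k$ to obtain $\nu^k(P_k)\to X$. At position $k+1$ (read cyclically) this map is $\tau$ applied to the chosen epic $P_k\twoheadrightarrow\tau^{-1}X^{k+1}$, hence is the admissible epic $\tau P_k\twoheadrightarrow X^{k+1}$. Letting $k=i-1$ for each $i$ (cyclically) shows that $\bigoplus_{k=0}^l\nu^k(P_k)\to X$ is componentwise admissible epic, and weak idempotent completeness of $\A$ promotes this to an admissible epic in $\Fac$. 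The dual construction using $\mathrm{ev}^k\dashv\nu^k$ and injectives of $\E$ provides enough injectives. A standard splitting argument then identifies the projectives (resp.\ injectives) of $\Fac^\E$ with the direct summands of direct sums of $\nu^k(P)$ (resp.\ $\nu^k(I)$); since $\tau$ preserves $\Proj(\E)$ and $\Inj(\E)$, all these objects lie in ${}^0\Fac^\E$, completing part~(a).

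For part~(b), when $\E$ is Frobenius, $\Proj(\E)=\Inj(\E)$, so the classes of projectives and injectives in $\Fac^\E$ from part~(a) coincide and $\Fac^\E_{l+1}(\omega)$ is Frobenius. All $\nu^k(P)$ with $P$ projective-injective already lie in ${}^0\Fac^\E$, which therefore inherits enough projective-injectives; the adjunctions above restrict to show they remain projective-injective, so ${}^0\Fac^\E$ is a sub-Frobenius category with the same projective-injectives as $\Fac^\E$. The inclusion ${}^0\Fac^\E\hookrightarrow\Fac^\E$ thus descends to a fully faithful functor on stable categories, since Hom-sets and the ideal of maps factoring through projective-injectives agree; it is triangulated because Happel's triangulation depends only on the class of projective-injectives.

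The main obstacle I foresee is the delicate componentwise analysis underlying the two adjunctions — pinning down exactly how the wrap-around $\alpha^l_X$ and the naturality of $\omega$ conspire to render a priori redundant compatibility constraints automatic — and then tracking the counits precisely enough to see that the assembled map $\bigoplus_k\nu^k(P_k)\to X$ is componentwise epic. Once this bookkeeping is in place, the Frobenius and triangulated conclusions follow essentially formally.
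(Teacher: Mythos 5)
Your proof takes essentially the same route as the paper: the two adjunctions you identify are exactly the paper's $\pi^k\dashv\nu^k$ and $\nu^k\dashv\tau^{-1}\pi^{k+1}$ (Lemma~\ref{lem: nu-adjunction}), and your assembly of $\bigoplus_k\nu^k(P_k)\twoheadrightarrow X$ from the counits is the paper's Lemma~\ref{lem: nu-admissible} combined with Construction~\ref{con: mMor-enough}. The one place where you compress the real work is the claim that weak idempotent completeness ``promotes'' the componentwise epic to an admissible epic of $\Fac^\E_{l+1}(\omega)$: what actually must be checked is that the kernel's transition maps are admissible monics (so the kernel lives in $\Mor^{\textup{m}_\A}_l(\E)$, whence in $\Fac^\E_{l+1}(\omega)$ by Lemma~\ref{lem: Fact-fully-exact}.\ref{lem: Fact-fully-exact-co-kernels}) — your intended argument, composing with the admissible monic inclusion into $\tilde X^{j+1}$ and applying Proposition~\ref{prp: WIC}.\ref{prp: WIC-comp-monic}, does work and is a cleaner substitute for the paper's explicit matrix computation, but it should be spelled out rather than asserted.
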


In the module case, \Cref{thmA: Fac-Frobenius} recovers a result of Sun and Zhang {\cite[Prop.~3.4]{SZ26}}. Their proof relies on a correspondence of Gorenstein projectives under Frobenius functors between Abelian categories. Our proof involves the left and right adjoints of the functors $\nu^k \colon \E \to \Fac^\E_{l+1}(\omega)$, which are given in terms of the projections $\pi^j \colon X \mapsto X^j$ for $j = k$ and $j=k+1$, respectively, see \Cref{lem: nu-adjunction}.

\smallskip

The (generalized) cokernel functor sends an $\E$-factorization $X=(X,\alpha)$ to the chain of monomorphisms
\[
\begin{tikzcd}
	\cok(X) \colon \; U^{1} \ar[r, tail] & U^{2} \ar[r, tail] & \cdots \ar[r, tail] & U^{l-1} \ar[r, tail] & U^l
\end{tikzcd}
 \]
in $\mMor_{l-1}(\A/\omega)$, defined by a commutative diagram
\[ 
	\begin{tikzcd}[sep={17.5mm,between origins}]
		X^{0} \ar[r, tail, "\alpha^{0}"] \ar[d, two heads] \ar[rd, phantom, "\square"] & X^{1} \ar[r, tail, "\alpha^{1}"] \ar[d, two heads] \ar[rd, phantom, "\square"] & X^{2} \ar[r, tail, "\alpha^{2}"] \ar[d, two heads] & \cdots \ar[r, tail, "\alpha^{l-2}"] \ar[rd, phantom, "\square"] & X^{l-1} \ar[r, tail, "\alpha^{l-1}"] \ar[d, two heads] \ar[rd, phantom, "\square"] & X^l \ar[d, two heads] \\
		0 \ar[r, tail] & U^{1} \ar[r, tail] & U^{2} \ar[r, tail] \ar[ru, phantom, "\square"] & \cdots \ar[r, tail] & U^{l-1} \ar[r, tail] & U^l,
	\end{tikzcd}
\]
obtained by successive pushouts. This leads to our main results, formulated under \Cref{asn}:

\begin{thmA} \label{thmA: main-1}
		Suppose that $\E$ has enough $\A$-projectives. Then the cokernel functor induces an equivalence of exact categories
		\begin{center}
			\begin{tikzcd}
				{}^0{\Fac}^\E_{l+1}(\omega)/\nu^l(\Proj(\E)) \ar[r, "\simeq"] & \mMor_{l-1}(\E_\omega).
			\end{tikzcd}
		\end{center}
\end{thmA}

\begin{thmA} \label{thmA: main-2}
	Suppose that $\A$ is weakly idempotent complete, that $\E$ has enough $\A$-projectives, that $(\tau,\omega)$ regular on $\E$, and that both $\E$ and $\E_\omega$ are Frobenius. 
	Then the cokernel functor induces a triangle equivalence
	\begin{center}
		\begin{tikzcd}
			 {}^0{\ul{\Fac}}^\E_{l+1}(\omega) \ar[r, "\simeq"] & \ul \mMor_{l-1}(\E_\omega).
		\end{tikzcd}
	\end{center}
\end{thmA}
In the module case, \Cref{thmA: main-1,thmA: main-2} recover results of Sun and Zhang {\cite[Cor.~4.5, Thm.~4.6]{SZ26}}. \\
To prove these, we express $\Fac^\E_{l+1}(\omega)$ as a suitable diagram category. It is equivalent to the comma category of the cokernel and projection functors
\begin{center}
	\begin{tikzcd}
		\Fac^\E_{2}(\omega) \ar[r, "\cok"] & \E_\omega & \mMor_{l-1}(\E_\omega), \ar[l, "\pi^{l-1}"']
	\end{tikzcd}
\end{center}
see \Cref{prp: comma-equiv}. \Cref{thmA: main-2} is the announced categorical version of Yoshino's result.

\smallskip

\Cref{thmA: Fac-Frobenius,thmA: main-1,thmA: main-2} correspond to \Cref{prp: Fact-enough}, \Cref{cor: Fac-mMor-exact}, and \Cref{thm: coker-equiv}, respectively, in the main part.

\section{Categories of monomorphisms}

In this section, we review preliminaries on monomorphism categories in the context of exact and triangulated categories. All (sub)categories and functors considered are assumed to be (full) additive. By the image of a functor we mean its full image. Admissible monics and epics in exact categories are represented by $\monic{\!}{\!}$ and $\epic{\!}{\!}$, respectively.

\begin{prp}[{\cite[Prop.~2.9]{Buh10}}] \label{prp: Buehler2.9}
	In an exact category, finite direct sums of short exact sequences are again short exact. In particular, any split short exact sequence is short exact. \qed
\end{prp}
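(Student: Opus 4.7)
The plan is to reduce to the binary case by induction on the number of summands, handle the binary case by factoring the direct-sum map into pushouts of admissible monics, and then obtain the \emph{in particular} clause by applying the first assertion to two trivial short exact sequences.

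For the binary case, suppose \monic{A}{B}\,\epic{B}{C} and \monic{A'}{B'}\,\epic{B'}{C'} are short exact with admissible monics $i,i'$ and admissible epics $p,p'$. I would factor $i \oplus i'$ as the composite
\[
\begin{tikzcd}
A \oplus A' \ar[r, "i \oplus \id_{A'}"] & B \oplus A' \ar[r, "\id_B \oplus i'"] & B \oplus B'.
\end{tikzcd}
\]
Using that a pushout in an additive category is the cokernel of the difference map, one identifies $i \oplus \id_{A'}$ with the pushout of $i \colon A \to B$ along the coproduct inclusion $A \to A \oplus A'$, and similarly $\id_B \oplus i'$ with the pushout of $i'$ along $A' \to B \oplus A'$. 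By the pushout axiom for exact categories, each factor is an admissible monic, hence so is their composite $i \oplus i'$. The dual pullback argument shows that $p \oplus p'$ is an admissible epic. It then remains to verify the kernel-cokernel pair property: composability is immediate from $(p \oplus p')(i \oplus i') = pi \oplus p'i' = 0$, and the universal property follows componentwise from $i = \ker p$ and $i' = \ker p'$ (and dually for the cokernel). The $n$-ary case follows by induction, grouping $n$ summands as $(n-1)+1$.

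For the \emph{in particular} statement, I would apply the binary case to the two trivial short exact sequences \monic{0}{A}\,\epic{A}{A} and \monic{B}{B}\,\epic{B}{0}, whose direct sum is precisely the canonical split sequence \monic{A}{A \oplus B}\,\epic{A \oplus B}{B}; since any split short exact sequence in an additive category is isomorphic to one of this form, the claim follows. The main obstacle in the overall argument is the pushout identification in the first step: one must unpack the additive-category formula for the pushout and match it against the block-matrix description of $i \oplus \id_{A'}$, and then invoke the exact-category axiom in exactly the right form. This is routine but essential, as it is the place where the axiomatic content of exact categories enters.
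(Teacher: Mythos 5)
Your proof is correct and follows essentially the same argument that Bühler gives for \cite[Prop.~2.9]{Buh10}, which the paper cites without reproducing: factor $i \oplus i'$ through $B \oplus A'$, recognize each factor as a pushout of an admissible monic along a coproduct inclusion so that the composite is an admissible monic by the pushout and composition axioms, dualize for the admissible epic, and verify componentwise that $p \oplus p'$ is a cokernel of $i \oplus i'$ (so the pair lies in the distinguished class since that class is closed under isomorphism); the split case is then the direct sum of the two trivial sequences. One cosmetic slip: the direct sum of $0 \rightarrowtail A \twoheadrightarrow A$ and $B \rightarrowtail B \twoheadrightarrow 0$ is $B \rightarrowtail A \oplus B \twoheadrightarrow A$, not $A \rightarrowtail A \oplus B \twoheadrightarrow B$; just swap the labels or the order of the summands.
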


\begin{dfn}
	Consider a short exact sequence $\begin{tikzcd}[cramped, sep=small] A \ar[r, tail, "i"] & B \ar[r, two heads, "p"] & C \end{tikzcd}$ in an exact category. Another short exact sequence $\begin{tikzcd}[cramped, sep=small] C \ar[r, tail, "j"] & B \ar[r, two heads, "q"] & A \end{tikzcd}$ is called a \textbf{reverse} of $(i,p)$ if $j$ is right-inverse to $p$, $q$ left-inverse to $i$, and $iq+jp = \id_B$.
\end{dfn}

\begin{lem} \label{lem: reverse}
	In an exact category, a short exact sequence $(i,p)$ admits a reverse $(j,q)$ if and only if $i$ has a left-inverse or $q$ a right-inverse, or, equivalently, if $(i,p)$ is split. In this case, $j$ and $q$ determine each other, and $(i,p)$ is a reverse of $(j,q)$. In addition, reverses are compatible with morphisms of short exact sequences in the following sense: Consider two split short exact sequences $(i, p)$ and $(i',p')$ with reverses $(j, q)$ and $(j',q')$, respectively, and a commutative diagram
	\begin{center}
		\begin{tikzcd}[sep={17.5mm,between origins}]
			A \ar[r, tail, "i"] \ar[d, "a"] 
			& B \ar[r, two heads, "p"] \ar[d, "b"] 
			& C \ar[d, "c"] \\
			A'  \ar[r, tail, "i'"] 
			& B' \ar[r, two heads, "p'"] 
			& C'.
		\end{tikzcd}
	\end{center}	
	Then, in the diagram
	\begin{center} 
		\begin{tikzcd}[sep={17.5mm,between origins}]
			A \ar[d, "a"] & B \ar[l, two heads, "q"'] \ar[d, "b"] & C \ar[l, tail, "j"'] \ar[d, "c"] \\
			A'  & \ar[l, two heads, "q'"'] B'  & \ar[l, tail, "j'"']  C',
		\end{tikzcd}
	\end{center}	
	the left square commutes if and only if the right one does. \qed
\end{lem}

\begin{prp}[{\cite[Prop.~2.12]{Buh10}}] \label{prp: Buehler2.12} \
	\begin{enumerate}[leftmargin=*]
		\item \label{prp: Buehler2.12-push} For a square
		\begin{center}
			\begin{tikzcd}[sep={17.5mm,between origins}]
				A \ar[r, tail, "i"] \ar[d, "f"] & B \ar[d, "f'"] \\
				A' \ar[r, tail, "i'"] & B'
			\end{tikzcd}
		\end{center}
		in an exact category, the following statements are equivalent:
		\begin{enumerate}[label=(\arabic*)]
			\item \label{prp: Buehler2.12-push-1} The square is a pushout.
			\item \label{prp: Buehler2.12-push-2} The square is bicartesian.
			\item \label{prp: Buehler2.12-push-3} The sequence \begin{tikzcd}[sep=large] A \ar[r, "\begin{pmatrix} i \\ -f \end{pmatrix}", tail] & B \oplus A' \ar[r, "\begin{pmatrix} f' \hspace{2mm} i' \end{pmatrix}", two heads] & B' \end{tikzcd} is short exact.
			\item \label{prp: Buehler2.12-push-4} The square is part of a commutative diagram
			\begin{center}
				\begin{tikzcd}[sep={17.5mm,between origins}]
					A \ar[r, tail, "i"] \ar[d, "f"] & B \ar[d, "f'"] \ar[r, two heads] & C \ar[d, equal] \\
					A' \ar[r, tail, "i'"] & B' \ar[r, two heads] & C.
				\end{tikzcd}
			\end{center}
		\end{enumerate}
		
		\smallskip
		
		\item \label{prp: Buehler2.12-pull} For a square
		\begin{center}
			\begin{tikzcd}[sep={17.5mm,between origins}]
				A \ar[r, two heads, "p'"] \ar[d, "g'"] & B \ar[d, "g"] \\
				A' \ar[r, two heads, "p"] & B'
			\end{tikzcd}
		\end{center}
		in an exact category, the following statements are equivalent:
		\begin{enumerate}[label=(\arabic*)]
			\item The square is a pullback.
			\item The square is bicartesian.
			\item The sequence \begin{tikzcd}[sep=large] A \ar[r, "\begin{pmatrix} p' \\ g' \end{pmatrix}", tail] & B \oplus A' \ar[r, "\begin{pmatrix} - g \hspace{2mm} p \end{pmatrix}", two heads] & B' \end{tikzcd} is short exact.
			\item The square is part of a commutative diagram
			\begin{center}
				\begin{tikzcd}[sep={17.5mm,between origins}]
					K \ar[r, tail] \ar[d, equal] & A \ar[r, two heads, "p'"] \ar[d, "g'"] & B \ar[d, "g"] \\
					K \ar[r, tail] & A' \ar[r, two heads, "p"] & B'.
				\end{tikzcd}
			\end{center}
		\end{enumerate}
	\end{enumerate}
	\qed 
\end{prp}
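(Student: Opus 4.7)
The plan is to prove part \ref{prp: Buehler2.12-pull} by applying part \ref{prp: Buehler2.12-push} in $\op\A$, which is again exact with admissible monics and epics swapped and pushouts and pullbacks interchanged (the bicartesian condition being self-dual). For part \ref{prp: Buehler2.12-push}, I will treat condition \ref{prp: Buehler2.12-push-3} as the hub, proving \ref{prp: Buehler2.12-push-3} $\Leftrightarrow$ \ref{prp: Buehler2.12-push-1}, \ref{prp: Buehler2.12-push-3} $\Leftrightarrow$ \ref{prp: Buehler2.12-push-4}, and \ref{prp: Buehler2.12-push-3} $\Rightarrow$ \ref{prp: Buehler2.12-push-2}; since \ref{prp: Buehler2.12-push-2} $\Rightarrow$ \ref{prp: Buehler2.12-push-1} is immediate from the definition of bicartesian, this closes all equivalences.

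The only non-trivial exact-category input I will use is the pushout-stability of admissible monics: the pushout of an admissible monic $i \colon A \rightarrowtail B$ along any $f \colon A \to A'$ exists, is bicartesian, and produces again an admissible monic $i' \colon A' \rightarrowtail B'$ whose cokernel is canonically identified with that of $i$. This is part of the axiomatic framework surveyed in the preceding sections of \cite{Buh10} and will be invoked without reproof.

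For \ref{prp: Buehler2.12-push-3} $\Rightarrow$ \ref{prp: Buehler2.12-push-1}, the short exact sequence exhibits $B'$ as the cokernel of the column $A \to B \oplus A'$ with components $i$ and $-f$; a cone $(b, a)$ under the span $B \leftarrow A \to A'$ corresponds to a map $B \oplus A' \to T$ annihilating this column, and the universal property of the cokernel supplies the unique factorization through $B'$. Similarly, \ref{prp: Buehler2.12-push-3} $\Rightarrow$ \ref{prp: Buehler2.12-push-2} holds because the same sequence's kernel interpretation, after a harmless sign flip in the $A'$-component, translates into the pullback property. For \ref{prp: Buehler2.12-push-1} $\Rightarrow$ \ref{prp: Buehler2.12-push-4}, let $p \colon B \twoheadrightarrow C$ be the cokernel of $i$; by the stability input above, the canonical pushout square built from $i$ and $f$ coincides up to unique isomorphism with ours, so $i'$ is an admissible monic with cokernel matching $p$ via an induced epic $B' \twoheadrightarrow C$, producing the diagram of \ref{prp: Buehler2.12-push-4}. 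For \ref{prp: Buehler2.12-push-4} $\Rightarrow$ \ref{prp: Buehler2.12-push-3}, the two short exact rows sharing the cokernel $C$ allow one to check directly that the middle sequence is a kernel--cokernel pair: the right-hand map is an admissible epic because composing with $B' \twoheadrightarrow C$ recovers the admissible epic $B \twoheadrightarrow C$ on the $B$-summand, and the kernel is identified with $A$ via the column $(i, -f)$.

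The main obstacle will be the pushout-stability step, where the axioms of an exact category really bite; everything else reduces to universal-property bookkeeping and standard manipulation of biproducts. I expect the sign conventions in \ref{prp: Buehler2.12-push-3} and its pullback analogue (the $-f$ and $-g$) to be the principal source of bookkeeping errors, but they are there precisely to align the column or row with the cone or cocone structure of the square, and the proof works uniformly in both variances.
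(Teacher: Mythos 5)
The paper itself gives no proof here: it cites Bühler's Proposition~2.12 and closes the statement with \qed, so your task amounts to an independent proof. Your global strategy (deduce \ref{prp: Buehler2.12-pull} from \ref{prp: Buehler2.12-push} by passing to $\op\A$, and treat \ref{prp: Buehler2.12-push-3} as the hub for \ref{prp: Buehler2.12-push}) is sound, and your arguments for \ref{prp: Buehler2.12-push-3}$\Rightarrow$\ref{prp: Buehler2.12-push-1}, \ref{prp: Buehler2.12-push-3}$\Rightarrow$\ref{prp: Buehler2.12-push-2} (with the sign-flip $\begin{pmatrix} b \\ -a \end{pmatrix}$) and \ref{prp: Buehler2.12-push-2}$\Rightarrow$\ref{prp: Buehler2.12-push-1} are correct universal-property translations. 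The ``pushout-stability'' input you use for \ref{prp: Buehler2.12-push-1}$\Rightarrow$\ref{prp: Buehler2.12-push-4} is not literally one of the exact-category axioms: the axiom only says the pushout of an admissible monic is an admissible monic, while the identification of its cokernel with that of $i$ is a lemma. It does admit a short direct proof (the cokernel $p$ of $i$ kills $i$ hence the pushout produces a unique $q\colon B'\to C$ with $qf'=p$, $qi'=0$, and one checks $q$ is a cokernel of $i'$), so this is a presentational gap rather than a mathematical one, but it should be stated rather than attributed to the axioms.

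The genuine gap is in \ref{prp: Buehler2.12-push-4}$\Rightarrow$\ref{prp: Buehler2.12-push-3}. You assert that $\begin{pmatrix} f' & i' \end{pmatrix}\colon B\oplus A'\to B'$ is an admissible epic ``because composing with $B'\twoheadrightarrow C$ recovers the admissible epic $B\twoheadrightarrow C$ on the $B$-summand.'' That observation only says $p'\circ\begin{pmatrix} f' & i' \end{pmatrix}\circ\iota_B=p$; from no form of this can one conclude that $\begin{pmatrix} f' & i' \end{pmatrix}$ itself is admissible epic in a general exact category. The cancellation ``$gh$ admissible epic $\Rightarrow g$ admissible epic'' is exactly the characterization of weak idempotent completeness (\cite[Prop.~7.6]{Buh10}), which is not assumed here, and the obscure axiom version requires $\begin{pmatrix} f' & i' \end{pmatrix}h$ admissible epic for some $h$, which you have not supplied (moreover it is proved in Bühler only after Proposition~2.12, so invoking it would be circular). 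The clean categorical repair is: first show $\begin{pmatrix} i \\ -f \end{pmatrix}$ is admissible monic by writing it as $\begin{pmatrix} i & 0 \\ 0 & 1 \end{pmatrix}\begin{pmatrix} 1 & 0 \\ -f & 1 \end{pmatrix}\begin{pmatrix} 1 \\ 0 \end{pmatrix}$; let $q\colon B\oplus A'\twoheadrightarrow Q$ be its cokernel; run the Noether lemma on the $3\times3$ diagram built from the rows $A\rightarrowtail A\oplus A'\twoheadrightarrow A'$, $A\rightarrowtail B\oplus A'\twoheadrightarrow Q$, $0\rightarrowtail C=C$ to obtain a short exact third column $A'\rightarrowtail Q\twoheadrightarrow C$; then compare with the given $A'\rightarrowtail B'\twoheadrightarrow C$ via the induced map $\phi\colon Q\to B'$ and conclude $\phi$ is an isomorphism (a short-five argument). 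Your appeal to a diagram chase is morally on the right track, but it either presupposes the Gabriel--Quillen embedding (which also postdates Proposition~2.12) or tacitly assumes weak idempotent completeness; as stated the admissible-epic claim is unjustified.
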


\begin{lem}[{\cite[Prop.~3.1]{Buh10}}] \label{lem: Buh3.1}
	In an exact category, any morphism $(a,b,c) \colon (i,p) \to (i',p')$ between short exact sequences $\begin{tikzcd}[cramped, sep=small] A \ar[r, tail, "i"] & B \ar[r, two heads, "p"] & C \end{tikzcd}$ and $\begin{tikzcd}[cramped, sep=small] A' \ar[r, tail, "i'"] & B' \ar[r, two heads, "p'"] & C' \end{tikzcd}$ factors through a short exact sequence $\begin{tikzcd}[cramped, sep=small] A' \ar[r, tail] & \tilde B \ar[r, two heads] & C \end{tikzcd}$ as follows:
	\begin{center}
		\begin{tikzcd}[sep={17.5mm,between origins}]
			A \ar[r, tail, "i"] \ar[d, "a"] \ar[rd, phantom, "\square"]
			& B \ar[r, two heads, "p"] \ar[d, dashed] 
			& C \ar[d, equal] \\
			A' \ar[d, equal] \ar[r, tail, dashed] & \tilde B \ar[r, two heads, dashed] \ar[d, dashed] \ar[rd, phantom, "\square"] & C \ar[d, "c"] \\
			A'  \ar[r, tail, "i'"] 
			& B' \ar[r, two heads, "p'"] 
			& C'
		\end{tikzcd}
	\end{center}
	\qed
\end{lem}

\begin{lem} \label{lem: restrict-splitting}
	In an exact category $\A$, consider a commutative diagram	
	\begin{equation} \label{diag: restrict-splitting}
		\begin{tikzcd}[sep={17.5mm,between origins}]
			A \ar[r, tail, "i"] \ar[d, hook, "a"] 
			& B \ar[r, two heads, "p"] \ar[d, hook, "b"] 
			& C \ar[d, hook, "c"] \\
			A'  \ar[r, tail, "i'"] 
			& B' \ar[r, two heads, "p'"] 
			& C'.
		\end{tikzcd}
	\end{equation}
	Suppose that $a,b,c$ are monics and the lower row splits. Consider any reverse $(j',q')$ of $(i', p')$ and morphisms $j \colon C \to B$ and $q \colon B \to A$ which fit into the commutative diagram
	\begin{equation*}
		\begin{tikzcd}[sep={17.5mm,between origins}]
			A \ar[d, hook, "a"] 
			& B \ar[l, "q"'] \ar[d, hook, "b"] 
			& C \ar[d, hook, "c"] \ar[l, "j"'] \\
			A'  
			& B' \ar[l, two heads, "q'"'] 
			& C'. \ar[l, tail, "j'"']
		\end{tikzcd}
	\end{equation*}
	Then $(j,q)$ is a reverse of $(i,p)$.
\end{lem}

\begin{proof}
	We compute
	\[b \circ (iq+jp) = biq+bjp = i'aq+j'cp=i'q'b+j'p'b = (i'q'+j'p') \circ b = b,\]
	which implies $iq+jp=\id_B$ since $b$ is a monic. Similarly,
	\[aqi=q'bi=q'i'a=a \hspace{5mm} \textup{and} \hspace{5mm} cpj=p'bj=p'j'c=c\]
	imply $qi=\id_A$ and $pj=\id_C$.
\end{proof}

\begin{lem} \label{lem: lift-splitting}
	In an exact category $\A$, consider a commutative diagram
	\begin{equation} \label{diag: lift-splitting}
		\begin{tikzcd}[sep={17.5mm,between origins}]
			A \ar[r, tail, "i"] \ar[d, two heads, "a"] 
			& B \ar[r, two heads, "p"] \ar[d, "b"] 
			& C \ar[d, "c"] \\
			A'  \ar[r, tail, "i'"] 
			& B' \ar[r, two heads, "p'"] 
			& C'.
		\end{tikzcd}
	\end{equation}	
	Suppose that $a$ is an admissible epic, that $C \in \Proj(\A)$ and that the lower row splits. Then for any reverse $(j',q')$ of $(i', p')$, there is a reverse $(j,q)$ of $(i,p)$ which fits into the following commutative diagram:
	\begin{equation*} 
		\begin{tikzcd}[sep={17.5mm,between origins}]
			A \ar[d, two heads, "a"] & B \ar[l, two heads, dashed, "q"'] \ar[d, "b"] & C \ar[l, tail, dashed, "j"'] \ar[d, "c"] \\
			A'  & \ar[l, two heads, "q'"'] B'  & \ar[l, tail, "j'"']  C'
		\end{tikzcd}
	\end{equation*}	
\end{lem}

\begin{proof}
	By \Cref{lem: Buh3.1}, there is a commutative diagram
	\begin{center}
		\begin{tikzcd}[sep={17.5mm,between origins}]
			A \ar[r, tail, "i"] \ar[d, two heads, "a"] 
			& B \ar[r, two heads, "p"] \ar[d, two heads, dashed, "\tilde a"] 
			& C \ar[d, equal] \\
			A' \ar[d, equal] \ar[r, tail, dashed, "\tilde i"] & \tilde B \ar[r, two heads, dashed, "\tilde p"] \ar[d, dashed, "\tilde c"] & C \ar[d, "c"] \\
			A'  \ar[r, tail, "i'"] 
			& B' \ar[r, two heads, "p'"] 
			& C',
		\end{tikzcd}
	\end{center}
	in $\A$, where $(\tilde i, \tilde p)$ is a short exact sequence and $\tilde c \tilde a = b$. The left-inverse $\tilde q := q' \tilde c$ of $\tilde i$ defines a reverse $(\tilde j, \tilde q)$ of $(\tilde i, \tilde p)$. Due to $C \in \Proj(\A)$, there is a morphism $j \colon C \to B$ such that $\tilde j = \tilde a j$. This right-inverse of $p$ defines a reverse $(j,q)$ of $(i,p)$. In the diagram
	\begin{center}
		\begin{tikzcd}[sep={17.5mm,between origins}]
			A \ar[d, two heads, "a"] 
			& B \ar[l, two heads, "q"'] \ar[d, two heads, "\tilde a"] 
			& C \ar[d, equal] \ar[l, tail, "j"'] \\
			A' \ar[d, equal] & \tilde B \ar[l, two heads, "\tilde q"'] \ar[d, "\tilde c"] & C \ar[d, "c"] \ar[l, tail, "\tilde j"'] \\
			A'  
			& B' \ar[l, two heads, "q'"'] 
			& C', \ar[l, tail, "j'"']
		\end{tikzcd}
	\end{center}
	the upper right and the lower left square commute by construction. Then so do the other two, due to \Cref{lem: reverse}.
\end{proof}

\begin{lem}[Noether lemma, {\cite[Ex.~3.7]{Buh10}}] \label{lem: Noether}
	Any solid commutative diagram
	
	\begin{center}
		\begin{tikzcd}[sep={17.5mm,between origins}]
			A' \ar[r, tail] \ar[d, tail] & B' \ar[r, two heads] \ar[d, tail] & C' \ar[d, tail, dashed]\\
			A \ar[r, tail] \ar[d, two heads] & B \ar[r, two heads] \ar[d, two heads] & C \ar[d, two heads, dashed]\\
			A'' \ar[r, tail] & B'' \ar[r, two heads] & C'' 
		\end{tikzcd}
	\end{center}
	in an exact category with short exact rows and columns can be uniquely completed by a dashed short exact sequence. \qed
\end{lem}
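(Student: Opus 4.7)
The plan has two parts: producing the dashed arrows, then establishing short exactness of the completed third column.

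For the arrows, the composite $A' \to B' \to B \to C$ agrees with $A' \to A \to B \to C$, which vanishes because $A \rightarrowtail B \twoheadrightarrow C$ is short exact. Since $B' \twoheadrightarrow C'$ is the cokernel of $A' \rightarrowtail B'$, this composite factors uniquely through $C'$, yielding a unique $C' \to C$ that fills the upper right square. Dually, using that $B \twoheadrightarrow B''$ is the cokernel of $A \rightarrowtail B$, one obtains a unique $C \to C''$ completing the lower right square, which settles uniqueness.

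For short exactness, I would form the pushout $P$ of $A' \rightarrowtail A$ along $A' \rightarrowtail B'$. By \Cref{prp: Buehler2.12}.\ref{prp: Buehler2.12-push} the resulting square is bicartesian, the canonical maps $i_1\colon B' \rightarrowtail P$ and $i_2\colon A \rightarrowtail P$ are admissible monics with cokernels isomorphic to $A''$ and $C'$ respectively, and the universal property supplies a canonical $\phi \colon P \to B$ with $\phi\, i_1 = (B' \rightarrowtail B)$ and $\phi\, i_2 = (A \rightarrowtail B)$. The composite $B \twoheadrightarrow B'' \twoheadrightarrow C''$, an admissible epic as a composition of admissible epics, agrees with $B \twoheadrightarrow C \to C''$ by commutativity; a direct check shows that composing it with $\phi$ annihilates both $i_1$ (using $B' \to B \to B''=0$) and $i_2$ (using $A \to A'' \to B'' \to C''=0$), so by the universal property of the pushout it annihilates $\phi$ itself. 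Hence $\phi$ factors through the admissible monic $K \rightarrowtail B$ given as the kernel of $B \twoheadrightarrow C''$. The Noether isomorphism in \cite{Buh10} (applied to the composite admissible epic $B \twoheadrightarrow B'' \twoheadrightarrow C''$) provides a short exact sequence $B' \rightarrowtail K \twoheadrightarrow A''$; comparing it with the analogous sequence $B' \rightarrowtail P \twoheadrightarrow A''$ coming from the pushout, the induced comparison $P \to K$ is an isomorphism by a morphism-of-short-exact-sequences argument with identities on the outer terms. Consequently $\phi$ is an admissible monic with cokernel $C''$, and applying the Noether isomorphism once more to the three-step filtration $A \rightarrowtail P \rightarrowtail B$—whose successive cokernels are $C'$, $C''$, and $C$—produces the desired short exact sequence $C' \rightarrowtail C \twoheadrightarrow C''$, whose monic coincides with the $C' \to C$ constructed earlier by uniqueness of the universal property.

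The main obstacle is orchestrating the pushout/kernel comparison entirely within the exact category, since one must work exclusively with admissible monics, admissible epics, and bicartesian squares characterized by \Cref{prp: Buehler2.12}. A conceptually cleaner but less self-contained route is to invoke the Gabriel–Quillen embedding of $\A$ into an abelian category, apply the classical $3 \times 3$-lemma there, and verify that the resulting short exact sequence lies in $\A$ with admissible arrows—automatic here because both maps are, by construction, (co)kernels of admissible maps of $\A$.
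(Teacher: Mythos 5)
The paper does not prove this lemma; it is cited from Bühler as Exercise~3.7 and stated with an immediate QED, so there is nothing in the paper to compare your argument against.

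Your argument is correct and is essentially the standard exact-category proof of the $3\times3$ lemma. The dashed arrows $C' \to C$ and $C \to C''$ are forced by the universal properties you invoke, and the pushout $P$ with the admissible monics $i_1\colon B' \rightarrowtail P$, $i_2\colon A \rightarrowtail P$ (cokernels $A''$ and $C'$ by \Cref{prp: Buehler2.12}.\ref{prp: Buehler2.12-push}), the induced $\phi\colon P \to B$, and the comparison with $K = \ker(B \twoheadrightarrow C'')$ via the dual Noether isomorphism give precisely the right filtration $A \rightarrowtail P \rightarrowtail B$ with successive cokernels $C'$, $C''$ and total cokernel $C$. The one step that deserves an explicit line is the commutativity of the right-hand square when applying the short five lemma to $B' \rightarrowtail P \twoheadrightarrow A''$ versus $B' \rightarrowtail K \twoheadrightarrow A''$: since $i_1$ and $i_2$ are jointly epic out of the pushout, it suffices to check after precomposition with each, where both candidate maps $P \to A''$ vanish on $i_1$ and restrict to $A \twoheadrightarrow A''$ along $i_2$, hence coincide. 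Similarly, your final identification of the Noether map $C' \rightarrowtail C$ with the $C' \to C$ from the first part goes through by precomposing with $B' \twoheadrightarrow C'$ via the pushout square and using uniqueness out of the cokernel. The Gabriel--Quillen alternative you sketch is also sound (modulo the usual smallness hypothesis on $\A$): the dashed arrows are genuine morphisms of $\A$ by construction, and exactness of the completed column is reflected from the ambient abelian category because the embedding reflects exactness.
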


\begin{dfn}
	A functor $\E' \to \E$ of exact categories is \textbf{(fully) exact} if it preserves (and reflects) short exact sequences. A subcategory $\E'$ of an exact category $\E$ is called \textbf{(fully) exact} if it is an exact category itself, and if the inclusion functor is (fully) exact.\footnote{Bühler uses the term \emph{fully exact} for the stronger notion of extension-closedness, see {\cite[Lem.~10.20]{Buh10}}.} 
\end{dfn}

\begin{lem}[{\cite[Lem.~1.19]{FS24}}] \label{lem: fullyexact} Let $\E'$ be a subcategory of an exact category $\E$. Suppose that one of the following conditions holds:
	\begin{enumerate}
		\item \label{lem: fullyexact-extclosed} $\E'$ is an extension-closed subcategory of $\E$.
		\item \label{lem: fullyexact-kerclosed} $\E'$ is closed in $\E$ under kernels of admissible epics and cokernels of admissible monics.
	\end{enumerate}
	Then $\E'$ is fully exact in $\E$, with the exact structure given by short exact sequences in $\E$ with objects in $\E'$. \qed
\end{lem}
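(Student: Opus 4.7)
The plan is to endow $\E'$ with the class $\mathcal{S}$ of those short exact sequences of $\E$ whose three terms all lie in $\E'$, and then verify that $(\E', \mathcal{S})$ satisfies Quillen's axioms. Once this is done, full exactness of the inclusion $\E' \hookrightarrow \E$ is immediate: preservation of $\mathcal{S}$ is definitional, and reflection holds because any sequence in $\E'$ that is short exact in $\E$ automatically has all terms in $\E'$, hence lies in $\mathcal{S}$.

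First I would observe that $\E'$, being a full additive subcategory of $\E$, inherits biproducts, and that split short exact sequences with terms in $\E'$ lie in $\mathcal{S}$ by \Cref{prp: Buehler2.9}; identities are admissible monic and epic in $\mathcal{S}$. For closure of admissible monics under composition, given $X \rightarrowtail Y \rightarrowtail W$ in $\mathcal{S}$, the composite is admissible monic in $\E$, and its cokernel sits, via \Cref{lem: Noether}, in an $\E$-exact sequence with outer terms $\cok(X \to Y)$ and $\cok(Y \to W)$ in $\E'$. In case (a) this places $\cok(X \to W)$ in $\E'$ by extension-closedness; in case (b) one observes directly that $X, W \in \E'$ and applies closure under cokernels of admissible monics. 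Composition of admissible epics is dual.

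The crucial step is the pushout axiom and its dual. Given an admissible monic $i \colon X \rightarrowtail Y$ in $\mathcal{S}$ with cokernel $Z \in \E'$ and any morphism $f \colon X \to X'$ in $\E'$, the pushout $Y'$ exists in $\E$ and, by \Cref{prp: Buehler2.12}, fits simultaneously into two short exact sequences of $\E$: the pushout sequence $X' \rightarrowtail Y' \twoheadrightarrow Z$ and the sequence $X \rightarrowtail Y \oplus X' \twoheadrightarrow Y'$. In case (a), $X', Z \in \E'$ forces $Y' \in \E'$ by extension-closedness; in case (b), $X \in \E'$ and $Y \oplus X' \in \E'$ (by additivity of $\E'$) force $Y' \in \E'$ via closure under cokernels of admissible monics. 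Either way, the pushout sequence lies in $\mathcal{S}$. The pullback axiom is handled symmetrically by the dual of \Cref{prp: Buehler2.12}: in case (a) by extension-closedness, and in case (b) by closure under kernels of admissible epics applied to $Y' \rightarrowtail Y \oplus Z' \twoheadrightarrow Z$.

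No individual step is deep; the point worth flagging is that under assumption (b) the two closure conditions pair exactly with the two axioms—cokernels of monics with pushouts, kernels of epics with pullbacks—so both halves of the hypothesis are genuinely needed. The only real bookkeeping is making sure that in each verification one selects the right one of the two sequences supplied by \Cref{prp: Buehler2.12}, depending on which closure property is available.
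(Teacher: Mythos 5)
Your proof is correct and takes the only natural route—verifying the exact-category axioms for the class $\mathcal{S}$ of $\E$-short exact sequences with terms in $\E'$. The paper itself defers to \cite[Lem.~1.19]{FS24} without reproducing a proof, so there is nothing in-text to compare against; your verification is complete, and you correctly pair each closure hypothesis with the axiom it serves (extension-closedness with the extension terms, cokernels of admissible monics with the pushout-object sequence $X \rightarrowtail Y \oplus X' \twoheadrightarrow Y'$, kernels of admissible epics with the dual pullback-object sequence). The only small point worth being explicit about is that for composition of admissible monics you must first invoke the composition axiom in $\E$ to know $X \rightarrowtail W$ is admissible monic with some cokernel $Z$, and only then does \Cref{lem: Noether} apply to place $Z$ as an extension of $\cok(X \to Y)$ by $\cok(Y \to W)$—you do say this, but the order of operations matters.
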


\begin{prp}[{\cite[Ex.~13.5, Prop.~11.3, Cor.~11.4]{Buh10}}]
	The subcategory $\Proj(\E)$ of projective objects of an exact category $\E$ is closed under kernels of admissible epics and the subcategory $\Inj(\E)$ of injective objects under cokernels of admissible monics. In particular, both are fully exact in $\E$ and carry the split exact structure. \qed
\end{prp}

We make use of the following terminology, due to Demonet and Iyama {\cite{DI16}}:

\begin{dfn}
	An exact functor $F \colon \E' \to \E$ is \textbf{extension-injective}, \textbf{-surjective}, or \textbf{-bijective} if, for all $X, Z \in \E'$, the induced group homomorphism
	\begin{equation*}
		\Ext^1_{\E'}(X, Z) \longrightarrow \Ext^1_{\E}(F(X), F(Z))
	\end{equation*}
	is injective, surjective, or bijective, respectively. 
\end{dfn}

For an equivalence, the terms \emph{fully exact} and \emph{extension-bijective} coincide, see also {\cite[Def.~3.2]{DI16}}:

\begin{lem} \label{lem: ext-bij}
	Let $F \colon \E' \to \E$ be an exact functor which is an equivalence with quasi-inverse $G$. Then the following are equivalent:
	\begin{enumerate}[label=(\arabic*)]
		\item $F$ is fully exact.
		\item $F$ is extension-bijective.
		\item $G$ is exact.
	\end{enumerate}
	In this case, $F$ is an \textbf{equivalence of exact categories}. \qed
\end{lem}

\begin{dfn}
	For an exact category $\E'$, consider a full, essentially surjective functor $F \colon \E' \to \E$. The \textbf{image (candidate) exact structure} induced by $F$ on $\E$ consists of all composable pairs of morphisms, isomorphic to $(F(i'),F(p'))$ for some short exact sequence $(i',p')$ of $\E'$. In general, this is not an exact structure.
\end{dfn}

\begin{lem} \label{lem: image-structure}
	Let $F \colon \E' \to \E$ be a full, essentially surjective functor between exact categories. Then $F$ is extension-surjective if and only if $\E$ carries the image exact structure induced by $F$. \qed
\end{lem}

\begin{prp} \label{prp: image-structure}
	Let $G \colon \E' \xrightarrow{\simeq} \E$ be an equivalence. Suppose that $\E'$ is an exact category and equip $\E$ with the image candidate exact structure induced by $G$.
	\begin{enumerate}
		\item Then $\E$ is an exact category and $G$ an equivalence of exact categories.
		
		\item For another exact category $\E''$, consider two full, essentially surjective functors $F' \colon \E'' \to \E'$ and $F \colon \E'' \to \E$ which fit into a diagram
		\begin{equation*}
			\begin{tikzcd}[row sep={15mm,between origins}, column sep={10mm,between origins}]
				& \E'' \ar[ld, "F'"'] \ar[rd, "F"] & \\
				\E' \ar[rr, "G", "\simeq"'] && \E,
			\end{tikzcd}
		\end{equation*}
		commutative up to natural isomorphisms. Suppose that $F'$ is extension-surjective. Then so is $F$. In particular, $\E$ carries the image exact structure induced by $F$, and $F$ is extension-injective if and only if $F'$ is so. \qed
	\end{enumerate}
\end{prp}

\begin{dfn} \label{dfn: quot}
	An \textbf{ideal} $\cI$ of a category $\A$ is a class of morphisms, closed under under pre- and postcomposition with arbitrary morphisms, such that $\cI(A,B) := \cI \cap \Hom_\A(A, B)$ is a (normal) subgroup of $\Hom_\A(A,B)$ for each $A, B \in \A$. The \textbf{quotient category} $\A/\cI$ has the same objects as $\A$ and groups of morphisms \[\Hom_{\A/\cI}(A,B) := \Hom_\A(A, B)/\cI(A, B)\]
	for all $A, B \in \A$. Composition is defined on representatives.
	If $\cI$ is the class of morphisms which factor through objects of a subcategory $\B$ of $\A$, which is closed under biproducts, the quotient category is denoted by $\A/\B$.\\
	For an exact category $\E$, the quotient categories $\ul \E := \E/\Proj(\E)$ and $\ol \E := \E/\Inj(\E)$ are referred to as the \textbf{projectively} and \textbf{injectively stable category} of $\E$, respectively.
\end{dfn}

\begin{ntn}
	Given a subcategory $\cS$ of a category $\A$, let $\langle \cS \rangle$ denote the smallest subcategory of $\A$ containing $\cS$ which is closed under biproducts.
\end{ntn}

\begin{rmk} \label{rmk: factor-through-biproduct}
Consider full subcategories $\cS$ and $\T$ of an additive category $\A$, closed under biproducts. A morphism $f\colon A \to B$ in $\A$ is zero in $\A/\langle \cS \cup \T \rangle$ if and only if there are objects $S \in \cS$, $T \in \T$ and morphisms $r \colon S \to B$, $t \colon T \to B$, $s \colon A \to S$, and $U \colon A \to T$ in $\A$ such that $f=rs+tu$.
\end{rmk}

\begin{rmk}
	Any full, essentially surjective functor $F \colon \E' \to \E$ induces an equivalence $\overline F \colon \E'/\cI' \to \E$, where $\cI'$ is the ideal of $\E'$ consisting of morphisms sent to zero under $F$. The diagram
	\begin{equation*}
		\begin{tikzcd}[row sep={15mm,between origins}, column sep={10mm,between origins}]
				& \E' \ar[ld, "\pi"'] \ar[rd, "F"] & \\
				\E'/\cI' \ar[rr, "\overline F", "\simeq"'] && \E,
			\end{tikzcd}
	\end{equation*}
	where $\pi \colon \E' \to \E'/\cI'$ is the canonical quotient functor, commutes. Thus, up to an equivalence of its codomain, any such $F$ is a canonical quotient functor.
\end{rmk}



We specialize \Cref{prp: image-structure} to the case of quotients:

\begin{prp} \label{prp: homomorphism-thm-reversed}
	Let $F \colon \E' \to \E$ be a full, essentially surjective,  extension-surjective functor between exact categories. Consider the ideal $\cI'$ of $\E'$ consisting of morphisms sent to zero under $F$, that is, such that the induced functor $\overline F \colon \E'/\cI' \to \E$ is an equivalence. Then the canonical quotient functor $\pi \colon \E' \to \E'/\cI'$ is extension-surjective and $\overline F$ an equivalence of exact categories. In particular, $\E'/\cI'$ carries the image exact structure induced by $\pi$, and $\pi$ is extension-bijective if and only if $F$ is so.
\end{prp}

\begin{proof}
	Pick a quasi-inverse $G$ of $\overline F$. It fits into a diagram
	\begin{equation*}
		\begin{tikzcd}[row sep={15mm,between origins}, column sep={10mm,between origins}]
			& \E' \ar[rd, "\pi"] \ar[ld, "F"'] & \\
			\E \ar[rr, "G", "\simeq"'] && \E'/\cI',
		\end{tikzcd}
	\end{equation*}
	commutative up to natural isomorphisms. The claim follows from \Cref{prp: image-structure}.
\end{proof}

\begin{dfn}
	We say that an exact subcategory $\E'$ of $\E$ has \textbf{enough $\E$-projectives} if there is an admissible epic $P \twoheadrightarrow X$ in $\E'$ with $P \in \Proj(\E)$ for each $X \in \E'$. Having \textbf{enough $\E$-injectives} is defined dually. If this holds for $\E' = \E$, one says that $\E$ has enough \textbf{enough projectives} or \textbf{enough injectives}, respectively.
\end{dfn}

\begin{dfn} A \textbf{Frobenius (exact) category} is an exact category $\F$ with enough projectives and injectives such that $\Proj(\F) = \Inj(\F)$. In this case, one speaks of \textbf{projective-injective} objects and calls $\ul \F = \ol \F$ the \textbf{stable category} of $\F$. By a \textbf{sub-Frobenius category} of a Frobenius category $\F$, we mean an exact subcategory $\F'$ which has enough $\F$-projectives and enough $\F$-injectives. This terminology is justified by \Cref{lem: sub-Frobenius}.\ref{lem: sub-Frobenius-b}.
\end{dfn}

\begin{thm}[{\cite[Thm.~2.6]{Hap88}}] \label{thm: stable-Frobenius}
	The stable category of a Frobenius category is triangulated. \qed
\end{thm}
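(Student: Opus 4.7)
The plan is to build the triangulated structure on $\ul\F$ following Happel's original approach, exploiting the interplay between enough injectives and enough projectives with $\Proj(\F)=\Inj(\F)$. The first step is the construction of the suspension functor $\Sigma\colon\ul\F\to\ul\F$. For each $X\in\F$, choose an admissible monic $\iota_X\colon X\rightarrowtail I(X)$ with $I(X)\in\Inj(\F)$, and set $\Sigma X:=\cok(\iota_X)$, giving a short exact sequence $\inj{X}{I(X)}\twoheadrightarrow\Sigma X$. On morphisms $f\colon X\to Y$, injectivity of $I(Y)$ produces a lift $\tilde f\colon I(X)\to I(Y)$, hence an induced map $\Sigma f\colon\Sigma X\to\Sigma Y$; I would check that this is independent of all choices modulo maps factoring through projective-injectives, using that any two lifts differ by a map factoring through $I(Y)$, and that any two choices of $(I(X),\iota_X)$ give canonically isomorphic cokernels in $\ul\F$. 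Dually, enough projectives furnishes a quasi-inverse $\Sigma^{-1}$, making $\Sigma$ an autoequivalence.

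Next I would define the distinguished triangles. Every short exact sequence $\epsilon\colon\monic{X}{Y}\twoheadrightarrow Z$ in $\F$ yields a \emph{standard triangle}: the admissible monic $\iota_X$ together with $f$ produces, via pushout, a map $h\colon Z\to\Sigma X$, and the triangle is
\[
\begin{tikzcd}
X\ar[r,"f"]&Y\ar[r,"g"]&Z\ar[r,"h"]&\Sigma X
\end{tikzcd}
\]
in $\ul\F$. A triangle is distinguished if it is isomorphic in $\ul\F$ to a standard one. Verification of (TR1) then has two nontrivial parts: the identity $X\xrightarrow{\id}X\to0\to\Sigma X$ is standard (take the short exact sequence $\monic{X}{I(X)}\twoheadrightarrow\Sigma X$); and an arbitrary $f\colon X\to Y$ embeds into a distinguished triangle by replacing it up to a projective-injective summand by the admissible monic $(f,\iota_X)^\top\colon X\rightarrowtail Y\oplus I(X)$ and taking its cokernel, which is stably isomorphic to the cone of $f$.

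For (TR2), the rotation axiom, the key point is to show that the rotated sequence $Y\to Z\to\Sigma X\to\Sigma Y$ is again standard. This is done by recognizing that the pushout square defining $h$ yields, via \Cref{prp: Buehler2.12}\ref{prp: Buehler2.12-push}, a short exact sequence $\monic{Y}{I(X)\oplus Z}\twoheadrightarrow\Sigma X$ whose associated standard triangle is isomorphic in $\ul\F$ to the rotation, up to the sign coming from replacing $-\Sigma f$ by $\Sigma f$ (which is justified since $-\id_{\Sigma X}$ is realized by an automorphism induced from $I(X)$). Dual reasoning using projectives handles the inverse rotation. The morphism axiom (TR3) then follows by extending a commutative square between the bases of two standard triangles to a morphism of short exact sequences via the pushout/pullback properties of \Cref{prp: Buehler2.12} and lifting along injectivity.

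The main obstacle is the octahedral axiom (TR4). Given composable admissible monics $\monic{X}{Y}$ and $\monic{Y}{Z}$ with cokernels $Y/X$, $Z/Y$, $Z/X$, the Noether lemma (\Cref{lem: Noether}) produces the crucial short exact sequence $\monic{Y/X}{Z/X}\twoheadrightarrow Z/Y$, which is promoted to the fourth face of the octahedron; the remaining connecting map $Z/Y\to\Sigma(Y/X)$ is constructed from the pushout defining the suspension, and commutativity of all faces is verified by diagram chasing modulo projective-injectives. The technical difficulty lies in showing that an arbitrary composable pair of morphisms in $\ul\F$ can be replaced by composable admissible monics up to projective-injective summands; this uses the same "augment by $\iota_X$" trick as in (TR1), applied coherently to both morphisms, and the fact that $\Proj(\F)=\Inj(\F)$ ensures the auxiliary summands vanish in $\ul\F$.
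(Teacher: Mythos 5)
Your sketch is the standard Happel construction (suspension via embedding into injectives, standard triangles from short exact sequences via pushout, reduction of composable morphisms to composable admissible monics using the $(f,\iota_X)^\top$ augmentation, Noether lemma for TR4), which is precisely the proof in the reference \cite[Thm.~2.6]{Hap88} that the paper cites without reproducing. The plan is correct and matches that source; the details you flag as needing care (sign in TR2, well-definedness of $\Sigma$ modulo projective-injectives, replacement of arbitrary composable pairs by composable monics) are indeed the nontrivial points Happel handles.
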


\begin{prp}[{\cite[Prop.~7.3]{IKM16}}] \label{prp: stable-functor-triang}
	Any exact functor $F\colon \F' \to \F$ of Frobenius categories preserving projective-injectives induces a triangle functor $\ul F\colon \ul \F' \to \ul \F$ of the respective stable categories.\qed
\end{prp}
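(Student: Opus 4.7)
The plan is to follow Happel's construction (\Cref{thm: stable-Frobenius}) of the triangulation on the stable category of a Frobenius category and verify that $F$ respects each of its three ingredients: the additive quotient, the suspension, and the class of standard triangles. As a first step, $F$ descends to an additive functor $\ul F \colon \ul \F' \to \ul \F$: by \Cref{rmk: factor-through-biproduct} (applied with $\cS = \T = \Proj(\F')$), a morphism $f$ in $\F'$ vanishes in $\ul \F'$ precisely when it factors through some projective-injective $P \in \F'$, and since $F(P) \in \Proj(\F) = \Inj(\F)$ by hypothesis, $F(f)$ factors through a projective-injective in $\F$, hence vanishes in $\ul \F$.

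Next, I would produce a natural isomorphism $\sigma \colon \ul F \circ [1] \Rightarrow [1] \circ \ul F$. The suspension on $\ul \F'$ is defined by fixing, for each $X \in \F'$, an admissible monic $X \hookrightarrow I'(X)$ with $I'(X)$ projective-injective, and setting $X[1] := \cok(X \hookrightarrow I'(X))$. Applying the exact functor $F$ produces a short exact sequence $FX \hookrightarrow F(I'(X)) \twoheadrightarrow F(X[1])$ in $\F$ with projective-injective middle term. Comparing this with a chosen defining sequence $FX \hookrightarrow I(FX) \twoheadrightarrow (FX)[1]$ by lifting $\id_{FX}$ against the injectivity of $I(FX)$ in one direction and against the injectivity of $F(I'(X))$ in the other yields mutually inverse morphisms between $F(X[1])$ and $(FX)[1]$ in $\ul \F$. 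The ambiguity of these lifts is absorbed by the quotient, and applying the same observation to an arbitrary morphism $X \to Y$ in $\F'$ gives naturality of $\sigma_X$.

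Finally, I would verify that $(\ul F, \sigma)$ sends distinguished triangles to distinguished triangles. A standard distinguished triangle in $\ul \F'$ arises from a short exact sequence $X \hookrightarrow Y \twoheadrightarrow Z$ by forming the pushout of $X \hookrightarrow Y$ along $X \hookrightarrow I'(X)$, which by \Cref{prp: Buehler2.12} is bicartesian and yields the connecting morphism $Z \to X[1]$ via the resulting comparison diagram. Since $F$ is exact, applying it preserves both the short exact sequence and the pushout (again by \Cref{prp: Buehler2.12}), and hence the whole comparison diagram; composing the third column with $\sigma_X$ identifies the image triangle with the standard triangle on $FX \hookrightarrow FY \twoheadrightarrow FZ$ in $\ul \F$. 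Since every distinguished triangle is isomorphic to a standard one, $(\ul F, \sigma)$ is a triangle functor. The main obstacle is the coherence check in the middle step: one must verify that the $\sigma_X$ produced by non-canonical lifts are nevertheless natural in $X$ and compatible with the connecting maps, a check that rests on the fact that in the stable category any two such lifts differ by a morphism factoring through a projective-injective.
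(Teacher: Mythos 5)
The paper does not prove this proposition; it cites \cite[Prop.~7.3]{IKM16} and concludes with \qed. Your proposal is the standard Happel-style argument and is essentially correct: $F$ descends to the additive quotient because it carries projective-injectives to projective-injectives; the comparison isomorphism $\sigma_X \colon F(X[1]) \cong (FX)[1]$ comes from comparing the two admissible monics $FX \rightarrowtail F(I'(X))$ and $FX \rightarrowtail I(FX)$ into projective-injectives, with mutually inverse cokernel maps in $\ul\F$; and standard triangles are preserved because the defining pushout square is encoded, via \Cref{prp: Buehler2.12}, by a short exact sequence $X \rightarrowtail Y \oplus I'(X) \twoheadrightarrow C$ which $F$ preserves. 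You rightly single out the coherence check as the real content. To close it, one argues as usual: if $u, v \colon M_1 \to M_2$ are two morphisms of short exact sequences restricting to the same map on subobjects, then $u - v$ kills the subobject and hence factors as $s \circ p_1$ through the cokernel $p_1 \colon M_1 \twoheadrightarrow C_1$; the induced difference on cokernels is then $p_2 \circ s$, which factors through the projective-injective $M_2$ and vanishes in $\ul\F$. This gives both that $\sigma_X$ is well-defined independent of the lift, that the two candidate inverses compose to the identity in $\ul\F$, and naturality in $X$; compatibility with the connecting morphism follows by the same token applied to the pushed-out sequence. With that spelled out, the sketch becomes a complete proof.
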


\begin{lem}[{\cite[Lem.~1.37]{FS24}}] \label{lem: sub-Frobenius}
	Let $\E'$ be an exact subcategory of $\E$.
	\begin{enumerate}
		\item \label{lem: sub-Frobenius-a} If $\E'$ has enough $\E$-projectives, then $\Proj(\E') = \Proj(\E) \cap \E'$, and the canonical functor $ \underline \E' \to \underline \E$ is fully faithful.
		
		\item \label{lem: sub-Frobenius-b} If $\F'$ is a sub-Frobenius category of $\F$, then $\F'$ is Frobenius, and the canonical functor $ \underline \F' \to \underline \F$ is a fully faithful triangle functor. \qed
	\end{enumerate}
\end{lem}

\begin{dfn}[{\cite[Def.~7.2]{Buh10}}]
	An additive category is called \textbf{weakly idempotent complete} if any (co)retraction has a (co)kernel.
\end{dfn}

\begin{prp}[{\cite[Cor.~7.5, Prop.~7.6]{Buh10}}] \label{prp: WIC}
	For an exact category $\E$, the following are equivalent:
	\begin{enumerate}
		\item $\E$ is weakly idempotent complete.
		\item Any retraction is an admissible epic.
		\item Any coretraction is an admissible monic.
		\item If a composition $gf$ of morphisms in $\E$ is an admissible epic, then so is $g$.
		\item \label{prp: WIC-comp-monic} If a composition $gf$ of morphisms in $\E$ is an admissible monic, then so is $f$.\qed
	\end{enumerate}
\end{prp}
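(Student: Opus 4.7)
The plan is to prove the equivalences by reducing to a core pair (1) $\Leftrightarrow$ (2) and (2) $\Leftrightarrow$ (4), using duality in $\op{\E}$ for (2) $\Leftrightarrow$ (3) and (4) $\Leftrightarrow$ (5) — these are symmetric because passing to the opposite exact category swaps retractions with coretractions and admissible epics with admissible monics. The easy half then comes for free: the implication (4) $\Rightarrow$ (2) holds because every retraction $r$ with section $s$ satisfies $r s = \id$, which is trivially an admissible epic; and (2) $\Rightarrow$ (1) is immediate since admissible epics have kernels in any exact category.

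For (1) $\Rightarrow$ (2), given a retraction $r \colon X \to Y$ with section $s$, weak idempotent completeness provides a kernel $k \colon K \to X$ of $r$. Since $r(\id_X - sr) = 0$, the universal property of $k$ factors $\id_X - sr = k u$ uniquely through some $u \colon X \to K$. I would then verify that $(k, s) \colon K \oplus Y \to X$ and $\binom{u}{r} \colon X \to K \oplus Y$ are mutually inverse, realizing $r$ as the admissible epic in a split short exact sequence, which is short exact by \Cref{prp: Buehler2.9}.

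The substance lies in (2) $\Rightarrow$ (4). Given $g f \colon X \to Z$ admissible epic with $f \colon X \to Y$ and $g \colon Y \to Z$, the plan is to construct an auxiliary short exact sequence from the shear morphism $\sigma = (\id_Y, -f) \colon Y \oplus X \to Y$. This is a retraction with section $\binom{\id_Y}{0}$, hence an admissible epic by (2), and a direct kernel computation identifies $\binom{f}{\id_X} \colon X \to Y \oplus X$ as its kernel. Combined with the hypothesis that $g f$ is admissible, I would assemble a $3 \times 3$ diagram with two short exact rows and two short exact columns, whose remaining row — by \Cref{lem: Noether} — must then be short exact, exhibiting $g$ as an admissible epic.

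The main obstacle is arranging the scaffold so that the induced morphism on cokernels is genuinely $g$, and not merely some map containing $g$ as one of its components. Executing this cleanly should proceed by first recognizing the square built from $\binom{f}{\id_X}$ and $g f$ as a pushout via \Cref{prp: Buehler2.12}.\ref{prp: Buehler2.12-push-4}, thereby producing a short exact sequence $Z \hookrightarrow W \twoheadrightarrow Y$, and then splitting off the $X$-summand using the resulting splitting to transfer admissibility onto $g$ itself; the book-keeping of signs and projections in the shear is where slips are most likely.
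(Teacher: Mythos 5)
The paper cites this result from B\"uhler with no in-text argument, so there is no proof of the paper's own to compare against; I assess your outline on its internal soundness. Your reduction scheme, the easy implications $(4)\Rightarrow(2)\Rightarrow(1)$, the two duality claims, and the $(1)\Rightarrow(2)$ direction (the pair $(k,s)\colon K\oplus Y\to X$ and $\binom{u}{r}\colon X\to K\oplus Y$ are indeed mutually inverse, and \Cref{prp: Buehler2.9} finishes) are all correct. The problem lies entirely in $(2)\Rightarrow(4)$.

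Pushing out the shear monic $\binom{f}{\id_X}\colon X\rightarrowtail Y\oplus X$ along $gf$ cannot produce $g$: the pushout is $W\cong Y\oplus Z$ with $p=\begin{pmatrix}\id_Y & -f\\ 0 & gf\end{pmatrix}$, the bottom row in \Cref{prp: Buehler2.12}.\ref{prp: Buehler2.12-push}.\ref{prp: Buehler2.12-push-4} is the trivial split sequence $Z\rightarrowtail Y\oplus Z\twoheadrightarrow Y$, and the induced map on cokernels is $\id_Y$. The $Y\to Z$ component of $p$ is zero, so $g$ appears nowhere --- the pushout depends only on $gf$, and no amount of ``splitting off the $X$-summand'' can recover $g$ from data that never saw it. You must feed $g$ itself into the scaffold. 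Keeping your shear sequence $X\overset{\binom{f}{\id}}{\rightarrowtail}Y\oplus X\overset{(\id_Y,-f)}{\twoheadrightarrow}Y$ as middle row, replace the pushout by the observation that $(-g,\,gf)=(-g,\id_Z)\circ(\id_Y\oplus gf)\colon Y\oplus X\to Z$ is an admissible epic (the second factor is a retraction, admissible by (2)); it kills $\binom{f}{\id_X}$, so $\binom{f}{\id_X}$ factors through its kernel $n\colon N\rightarrowtail Y\oplus X$ via a coretraction $m\colon X\to N$, which is admissible by (3). Then \Cref{lem: Noether} with left column $X=X\twoheadrightarrow 0$, middle column $N\rightarrowtail Y\oplus X\twoheadrightarrow Z$, and rows $X\overset{m}{\rightarrowtail}N\twoheadrightarrow\cok m$, the shear row, and $0\rightarrowtail Z=Z$, produces a short exact right column $\cok m\rightarrowtail Y\twoheadrightarrow Z$ whose epic, by commutativity of the bottom-right square, is $-g$; hence $g$ is admissible. (B\"uhler's own proof instead pulls $gf$ back along $g$.) Finally, \Cref{lem: Noether} as stated completes a column, not a ``remaining row''; the transposed form holds by relabeling rows as columns, but it is not the form recorded in the paper.
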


\begin{ntn}[{\cite[Def.~3.1]{BM24}}, {\cite[Def~4.1]{IKM17}}] \label{ntn: mMor} 
	Let $\E$ be an exact category and $l \in \NN$.
	\begin{enumerate}
		\item \label{ntn: mMor-category} Let $\Mor_{l}(\E)$ denote the category of diagrams
		\[
		\begin{tikzcd}
			X=(X, \alpha)\colon \; X^0 \ar[r, "\alpha^0"] & X^1 \ar[r, "\alpha^1"] & \cdots \ar[r, "\alpha^{l-1}"] & X^l
		\end{tikzcd}
		\]
		of type $A_{l+1}$ in $\E$ with the termwise exact structure, where $A_{l+1}$ is the unidirectional linear quiver with $l+1$ vertices.
		
		\item \label{ntn: mMor-pi} For $k \in \{0, \dots, l\}$, let $\pi^k \colon \Mor_l(\E) \to \E$ denote the exact functor which sends $X \in \Mor_l(\E)$ to $X^k \in \E$. Note that $\pi^l$ restricts to a faithful functor $\mMor_{l}(\E) \to \E$.
		
		\item \label{ntn: mMor-iota} Let $\iota = \iota_l \colon \Mor_{l}(\E) \to \Mor_{l+1}(\E)$ denote the fully faithful, fully exact functor which sends $X \in \Mor_l(\E)$ to
		\[
		\begin{tikzcd}
			0 \ar[r] & X^0 \ar[r, "\alpha^0"] & X^1 \ar[r, "\alpha^1"] & \cdots \ar[r, "\alpha^{l-1}"] & X^l.
		\end{tikzcd}
		\]
		
		\item By $\mMor_l(\E)$ and $\smMor_l(\E)$ we denote the subcategories of $\Mor_l(\E)$, where all arrows are admissible or split monics, respectively. For a subcategory $\E'$ of $\E$, we set \[\Mor^{\textup{m}_\E}_l(\E') := \mMor_{l}(\E) \cap \Mor_{l}(\E').\]
		
		\item \label{ntn: mMor-mu} Given an object $A \in \E$ and $n \in \{1, \dots, l+1\}$, we define the \textbf{trivial chain of monics}
		\begin{center}
			\begin{tikzcd}
				\mu_n(A)\colon \; 0 \ar[r, equal] & \cdots \ar[r, equal] & 0 \ar[r, tail] & A^{l-n+1} \ar[r, equal] & \cdots \ar[r, equal] & A^{l}
			\end{tikzcd}
		\end{center}
		in $\smMor_{l}(\E)$ by $A^k := A$ for $k \in \{l-n+1, \dots, l\}$.
	\end{enumerate}
\end{ntn}

\begin{thm}[{\cite[Props.~3.5, 3.9, 3.11, Thm.~3.12]{BM24}}] \label{thm: mMor}
	Let $\E$ be an exact category.
	\begin{enumerate}
		\item \label{thm: mMor-exact} The category $\mMor_l(\E)$ is a fully exact subcategory of $\Mor_l(\E)$.
		
		\item \label{thm: mMor-Proj} We have $\Proj(\mMor_{l}(\E))=\smMor_l(\Proj(\E))$ and $\Inj(\mMor_{l}(\E))=\smMor_l(\Inj(\E))=\mMor_l(\Inj(\E))$.
		
		\item \label{thm: mMor-enough} If $\E$ has enough projectives, or, injectives, then so has $\mMor_l(\E)$.
	\end{enumerate}
	In particular, if $\F$ is a Frobenius category, then so is $\mMor_l(\F)$, and $\Proj(\mMor_{l}(\F))=\mMor_l(\Proj(\F))$. \qed
\end{thm}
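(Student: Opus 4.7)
The plan is to establish (a), (c), (b) in that order, exploiting the adjunction $\lambda^k := \mu_{l-k+1} \dashv \pi^k$: a chain morphism $\lambda^k(A) \to X$ is uniquely determined by its component $A \to X^k$, since the structure maps of $\lambda^k(A)$ are identities or zero. Both $\lambda^k$ and $\pi^k$ are exact, so $\lambda^k$ preserves projectives.

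For (a), I would apply \Cref{lem: fullyexact}\ref{lem: fullyexact-extclosed} and verify extension-closedness. Given a termwise short exact sequence $X' \hookrightarrow X \twoheadrightarrow X''$ in $\Mor_l(\E)$ with $X', X'' \in \mMor_l(\E)$, form for each $k$ the pullback $Q$ of $\alpha''^k$ along $X^{k+1} \twoheadrightarrow X''^{k+1}$. By \Cref{prp: Buehler2.12}(b) this yields an admissible monic $Q \hookrightarrow X^{k+1}$ together with a short exact sequence $X'^{k+1} \hookrightarrow Q \twoheadrightarrow X''^k$, and the pullback universal property produces a unique $\delta \colon X^k \to Q$ factoring $\alpha^k$ as $X^k \xrightarrow{\delta} Q \hookrightarrow X^{k+1}$ and compatible with the projections to $X''^k$. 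The resulting square with vertices $X'^k, X^k, X'^{k+1}, Q$ fits, together with the identity on $X''^k$, into the configuration of \Cref{prp: Buehler2.12}(a); hence it is a pushout, and $\delta$ is admissible monic as the pushout of $\alpha'^k$ along an admissible monic. Consequently $\alpha^k$ is admissible monic.

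For (c), given admissible epics $P^k \twoheadrightarrow X^k$ with $P^k \in \Proj(\E)$ for each $k$, set $P := \bigoplus_{k=0}^l \lambda^k(P^k) \in \smMor_l(\Proj(\E))$ and let $\phi \colon P \to X$ be the chain morphism corresponding under $\lambda^k \dashv \pi^k$ on each summand to the given cover. At position $j$, the morphism $\phi^j \colon \bigoplus_{k \leq j} P^k \to X^j$ contains the admissible epic $P^j \twoheadrightarrow X^j$ as its $k=j$ coordinate; applying \Cref{prp: Buehler2.12}(b) to the pullback of this epic along the remaining summands (and changing sign on one factor) shows that adjoining arbitrary morphisms to an admissible epic via direct sum preserves admissibility, so $\phi^j$ is admissible epic for every $j$ and $\phi$ is an admissible epic in $\mMor_l(\E)$. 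The injective case is dual, using the right adjoint $\rho^l(A) = (A = \cdots = A)$ of $\pi^l$ together with the adjunctions $c_n \dashv \mu_n$ where $c_n(X) = \cok(X^{l-n} \hookrightarrow X^l)$, exactness of $c_n$ following from (a).

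For (b), the inclusion $\smMor_l(\Proj(\E)) \subseteq \Proj(\mMor_l(\E))$ follows because every $X \in \smMor_l(\E)$ decomposes, by inductively choosing retractions of the $\alpha^{k-1}$, as $\bigoplus_{k=0}^l \lambda^k(C^k)$ with $C^0 = X^0$ and $C^k = \cok(\alpha^{k-1})$ a direct summand of $X^k$; and $\lambda^k$ preserves projectives. Conversely, any projective $X \in \mMor_l(\E)$ splits off the cover $P$ from (c) as a summand, so each $X^k$ is a summand of a projective and a retraction of each structure map of $P$ conjugated by the section and retraction of $X \hookrightarrow P$ yields a retraction of $\alpha^k$, placing $X$ in $\smMor_l(\Proj(\E))$. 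The injective case is dual, and the equality $\smMor_l(\Inj(\E)) = \mMor_l(\Inj(\E))$ holds because any admissible monic out of an injective object splits by lifting the identity. The Frobenius specialization is then immediate from $\Proj(\F) = \Inj(\F)$. The main obstacle I anticipate lies in (a): matching the cokernels of the two short exact rows and verifying that the induced vertical on $X''^k$ is the identity, so that \Cref{prp: Buehler2.12}(a) can be invoked to identify $\delta$ as a pushout, requires a careful diagram chase directly from the pullback universal property.
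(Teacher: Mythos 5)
The paper cites this theorem from [BM24] without proof (the \qed closes the statement), so there is no in-paper proof to compare against; the closest material is \Cref{con: mMor-enough}, which gives exactly your projective cover $P = \bigoplus_k \mu_{l-k+1}(Q^k)$, and \Cref{lem: Fact-fully-exact}(a), whose one-line proof of the analogous extension-closure cites Bühler's Five Lemma (Cor.~3.2). Your decompositions $X \cong \bigoplus_k \lambda^k(C^k)$ for split chains, the retraction-conjugation argument for the converse, and the adjunctions $\lambda^k \dashv \pi^k$, $c_n \dashv \mu_n$ are all correct and match the standard approach.

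However, there is a real gap between (a) and (c). After proving only extension-closure in (a), the exact structure on $\mMor_l(\E)$ is ``termwise short exact with all three terms in $\mMor_l(\E)$'' (\Cref{lem: fullyexact}). Thus for your termwise admissible epic $\phi \colon P \to X$ to be an admissible epic \emph{in} $\mMor_l(\E)$, you must additionally verify that its kernel $K$ lies in $\mMor_l(\E)$, i.e.\ that the induced maps $K^j \to K^{j+1}$ are admissible monics in $\E$. This does not follow from extension-closure; it is the separate statement that $\mMor_l(\E)$ is closed in $\Mor_l(\E)$ under kernels of termwise admissible epics (equivalently, the variant of the Five Lemma that says in a morphism of short exact sequences with the middle and right verticals admissible monics, the left vertical is too). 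You never establish this, and both the rest of (c) and your converse direction in (b) (splitting $X$ off $P$) depend on it. The same issue recurs for the injective case, where additionally the claim that the map into $\bigoplus_n \mu_n(J_n)$ is a termwise admissible monic needs spelling out --- the $n$-th component $X^j \rightarrowtail X^l \twoheadrightarrow c_n(X) \rightarrowtail J_n$ is generally not monic, and one must isolate the split-monic component before invoking the direct-sum trick. A secondary issue in (a): admissible-monicness of the pullback $Q \hookrightarrow X^{k+1}$ (pullback of an admissible monic along an admissible epic) is not contained in \Cref{prp: Buehler2.12}(b); it is Bühler's Prop.~2.15. Both problems dissolve if you replace the hands-on pullback argument in (a) by a direct appeal to the Five Lemma, in the strengthened form that yields closure under extensions \emph{and} under kernels of admissible epics and cokernels of admissible monics.
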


\begin{ntn}
	For a Frobenius category $\F$ and $l \in \NN$, we denote the stable category of $\mMor_{l}(\F)$ by $\ul \mMor_{l}(\F)$. It is a triangulated category, see \Cref{thm: stable-Frobenius}.
\end{ntn}

\begin{con}[Projectives in $\mMor_{l}(\E)$] \label{con: mMor-enough}
	Let $\E$ be an exact category with enough projectives, $l \in \NN$, and $X=(X, \alpha) \in \mMor_{l}(\E)$. For each $k \in \{0, \dots, l\}$, choose an admissible epic $q^k \colon \epic{Q^k}{X^k}$ in $\E$ with $Q^k \in \Proj(\E)$. Then
	\[P := \bigoplus_{k=0}^l \mu_{l-k+1}(Q^k)\in \Proj(\mMor_l(\F)),\]
	and there is an admissible epic $p=(p^k)_{k=0,\dots,l}\colon \epic P X$ in $\mMor_l(\F)$, defined by
		\[p^k := \begin{pmatrix}
			\alpha^{k-1} \cdots \alpha^0 q^0 & \cdots & \alpha^{k-1} q^{k-1} & q^k
		\end{pmatrix} \colon P^k = Q^0 \oplus \dots \oplus Q^k \to X^k.\]
\end{con}

\section{Hypersurface categories}

In this section, we introduce twisted homotheties to generalize regular normal elements of a ring in a categorical setting. The hypersurface category associated to a twisted homothety then extends the concept of modules over a hypersurface ring to exact categories.

\begin{dfn}
	By a \textbf{twisted homothety} $(\tau, \omega)$ on an additive category $\A$ we mean an additive automorphism $\tau$ of $\A$ together with a natural transformation $\omega \colon \id_\A \to \tau$ such that $\omega \tau = \tau \omega$. If $\A$ is an exact category, we require $\tau$ to be fully exact.
\end{dfn}

\begin{dfn}
	Let $(\tau, \omega)$ be a twisted homothety on an additive category $\A$.
	We define the \textbf{hypersurface category} $\A / \omega$ as the subcategory of $\A$ consisting of all objects $X \in \A$ with $\omega_X = 0$. It is clearly replete and preadditive, and it contains the zero object of $\A$. The category $\A / \omega$ is then additive, since $\omega_{X \oplus Y} = \omega_X \oplus \omega_Y$ for all $X, Y \in \A$.
\end{dfn}

\begin{rmk} \label{rmk: tau-A/omega}
	Any twisted homothety $(\tau, \omega)$ on an additive category $\A$ restricts to $\A/\omega$. Indeed, $\tau(\A/\omega) = \A/\omega$, since $\omega_{\tau A} = \tau(\omega_A) = 0$ if and only if $\omega_A=0$ for each $A \in \A$.
\end{rmk}

\begin{lem} \label{lem: A/omega-fully-exact}
	Let $(\tau, \omega)$ be a twisted homothety on an exact category $\A$. If \begin{tikzcd}[cramped, sep=small] X \ar[r, tail, "i"] & Y \ar[r, two heads, "p"] & Z \end{tikzcd} is a short exact sequence in $\A$ with $Y \in \A/\omega$, then $X,Z \in \A/\omega$. In particular, $\A/\omega$ is a fully exact subcategory of $\A$.
\end{lem}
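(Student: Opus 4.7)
The plan is to chase the naturality square of $\omega$ against the short exact sequence and exploit that admissible monics are monomorphisms and admissible epics are epimorphisms, together with full exactness of $\tau$.

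Concretely, first I would write down the commutative diagram
\[
\begin{tikzcd}
X \ar[r, tail, "i"] \ar[d, "\omega_X"'] & Y \ar[r, two heads, "p"] \ar[d, "\omega_Y"] & Z \ar[d, "\omega_Z"] \\
\tau X \ar[r, tail, "\tau i"'] & \tau Y \ar[r, two heads, "\tau p"] & \tau Z
\end{tikzcd}
\]
whose commutativity is the naturality of $\omega\colon \id_\A \to \tau$. Since $\tau$ is fully exact, the bottom row is again short exact; in particular $\tau i$ is a monic and $\tau p$ an epic in $\A$.

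From $\omega_Y = 0$ we get $(\tau i)\circ \omega_X = \omega_Y \circ i = 0$, and cancelling the monic $\tau i$ yields $\omega_X = 0$, so $X \in \A/\omega$. Dually, $\omega_Z \circ p = (\tau p) \circ \omega_Y = 0$, and cancelling the epic $p$ yields $\omega_Z = 0$, so $Z \in \A/\omega$. No serious obstacle appears; the only thing one must be careful about is that $\tau i$ is a genuine monomorphism, which is guaranteed by full exactness of $\tau$.

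For the \emph{in particular} clause, the displayed conclusion says precisely that $\A/\omega$ is closed in $\A$ under kernels of admissible epics (taking $X = \ker p$) and under cokernels of admissible monics (taking $Z = \cok i$). Hence \Cref{lem: fullyexact}.\ref{lem: fullyexact-kerclosed} applies and shows that $\A/\omega$ is a fully exact subcategory of $\A$, with exact structure inherited from $\A$.
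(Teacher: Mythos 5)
Your proof is correct and matches the paper's argument essentially verbatim: both chase the naturality square for $\omega$, cancel the monic $\tau i$ to get $\omega_X = 0$ and the epic $p$ to get $\omega_Z = 0$, and then invoke \Cref{lem: fullyexact}.\ref{lem: fullyexact-kerclosed} for the closure-under-(co)kernels formulation of full exactness. No differences worth noting.
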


\begin{proof}
	Consider the diagram
	\begin{center}
		\begin{tikzcd}[sep=large]
			X \ar[r, tail, "i"] \ar[d, "\omega_X"] & Y \ar[r, two heads, "p"] \ar[d, "\omega_Y \, = \, 0"] & Z \ar[d, "\omega_Z"] \\
			\tau X \ar[r, tail, "\tau i"] & \tau Y \ar[r, two heads, "\tau p"] & \tau Z
		\end{tikzcd}
	\end{center}
	with short exact rows. The definitions of monics and epics yield that $\omega_X =0$ and $\omega_Z=0$, respectively. The particular claim follows from \Cref{lem: fullyexact}.\ref{lem: fullyexact-kerclosed}.
\end{proof}

\begin{rmk}
	In general, the subcategory $\A/\omega$ of $\A$ is not extension-closed.
\end{rmk}

\begin{lem} \label{lem: A/omega-coker}
	Let $(\tau, \omega)$ be a twisted homothety on an exact category $\A$ and \begin{tikzcd}[cramped, sep =small] X \ar[r, tail, "i"] & Y \ar[r, two heads, "p"] & Z \end{tikzcd} a short exact sequence  in $\A$. Then $Z \in \A/\omega$ if and only if $\omega_Y$ factors through $\tau i$, or, equivalently, if $\omega_{\tau^{-1}Y}$ factors through $i$.
\end{lem}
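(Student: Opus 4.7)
The plan is to reduce the statement to the universal property of the kernel/cokernel of $\tau i$ and $\tau p$, using the naturality square for $\omega$. Writing the commutative diagram
\[
\begin{tikzcd}[sep=large]
X \ar[r, tail, "i"] \ar[d, "\omega_X"'] & Y \ar[r, two heads, "p"] \ar[d, "\omega_Y"] & Z \ar[d, "\omega_Z"] \\
\tau X \ar[r, tail, "\tau i"'] & \tau Y \ar[r, two heads, "\tau p"] & \tau Z,
\end{tikzcd}
\]
I note that the bottom row is short exact since $\tau$ is (fully) exact. In particular $\tau i$ is a kernel of $\tau p$ and $\tau p$ is a cokernel of $\tau i$, which I will invoke in both directions.

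For the forward direction, assume $\omega_Z=0$. Then naturality of $\omega$ at $p$ gives $\tau p \circ \omega_Y = \omega_Z \circ p = 0$, so the universal property of the kernel $\tau i$ of $\tau p$ produces a unique morphism $g\colon Y\to \tau X$ with $\omega_Y = \tau i \circ g$. Conversely, if $\omega_Y = \tau i \circ g$ for some $g$, then
\[
\omega_Z \circ p = \tau p \circ \omega_Y = \tau p \circ \tau i \circ g = \tau(p i) \circ g = 0,
\]
and since $p$ is epic (in particular right-cancellable) this forces $\omega_Z = 0$. This establishes the first equivalence.

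For the second equivalence, I use that the condition $\omega \tau = \tau \omega$ applied at the object $\tau^{-1}Y$ yields $\omega_Y = \tau(\omega_{\tau^{-1}Y})$, i.e.\ $\omega_{\tau^{-1}Y} = \tau^{-1}(\omega_Y)$. Because $\tau$ is an additive automorphism, $\omega_Y$ factors through $\tau i$ if and only if $\tau^{-1}(\omega_Y) = \omega_{\tau^{-1}Y}$ factors through $\tau^{-1}(\tau i) = i$. I do not anticipate any genuine obstacle; the only point requiring a little care is to ensure that $\tau$ being fully exact really delivers the short exactness of the bottom row, and that being an automorphism is what promotes the equivalence of factorization conditions under $\tau^{-1}$.
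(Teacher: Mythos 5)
Your proof is correct and takes essentially the same approach as the paper: the naturality square for $\omega$, exactness of the bottom row via fully exact $\tau$, and the kernel universal property of $\tau i$ combined with $p$ being epic. The only cosmetic differences are that you separate the two implications (the paper chains them as a string of equivalences) and you spell out the second reformulation via $\omega_Y=\tau(\omega_{\tau^{-1}Y})$, which the paper leaves implicit.
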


\begin{proof}
	Consider the commutative diagram
	
	\begin{center}
		\begin{tikzcd}[sep=large]
			X \ar[r, tail, "i"] \ar[d, "\omega_X"] & Y \ar[r, two heads, "p"] \ar[d, "\omega_Y"] & Z \ar[d, "\omega_Z"] \\
			\tau X \ar[r, tail, "\tau i"] & \tau Y \ar[r, two heads, "\tau p"] & \tau Z
		\end{tikzcd}
	\end{center}
	with short exact rows. Since $p$ is an epic, $\omega_Z=0$ is equivalent to $\tau(p) \omega_Y=\omega_Zp=0$. As $\tau i$ is the kernel of $\tau p$, this holds if and only if $\omega_Y$ factors through $ \tau i$.
\end{proof}

\begin{dfn}
	We call a twisted homothety $(\tau, \omega)$ on an exact category $\A$ \textbf{regular} on a subcategory $\B \subseteq \A$ if $\omega_B$ is an admissible monic in $\A$ for all $B \in \B$.
\end{dfn}

\begin{ntn} \label{ntn: overline-omega}
	Let $(\tau, \omega)$ be a twisted homothety on an exact category $\A$, regular on a subcategory $\B \subseteq \A$. Given any object $B \in \B$, we denote the cokernel of $\omega_{\tau^{-1}B}\colon \tau^{-1}B \rightarrowtail B$ by $\ol \omega_{B}\colon B \twoheadrightarrow \ol {B}$. Since $\omega_{B} = \tau(\omega_{\tau^{-1} B}) \circ \id_B$, we have $\ol B \in \A/\omega$ due to \Cref{lem: A/omega-coker}.
\end{ntn}

\begin{lem} \label{lem: Proj-A/omega}
	Let $(\tau, \omega)$ be a twisted homothety on an exact category $\A$, regular on a subcategory  $\B$ of $\Proj(\A)$. Then $\ol P \in \Proj(\A/\omega)$ for any $P \in \B$.
\end{lem}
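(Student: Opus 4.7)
The plan is to verify the universal lifting property of projectivity in $\A/\omega$ directly, using projectivity of $P$ in $\A$ together with naturality of $\omega$. Fix an admissible epic $q \colon \epic{Y}{Z}$ in $\A/\omega$ and a morphism $f \colon \ol P \to Z$. By \Cref{lem: A/omega-fully-exact}, $q$ is also an admissible epic in $\A$, so the projectivity of $P$ in $\A$ supplies a morphism $\tilde g \colon P \to Y$ with $q \tilde g = f \ol\omega_P$, where $\ol\omega_P \colon \epic{P}{\ol P}$ is the cokernel of $\omega_{\tau^{-1}P}$ from \Cref{ntn: overline-omega}.

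The next step is to show that $\tilde g$ factors through $\ol\omega_P$. Since $\ol\omega_P$ is the cokernel of $\omega_{\tau^{-1}P}$, this amounts to checking $\tilde g \circ \omega_{\tau^{-1}P} = 0$. Naturality of $\omega$ applied to $\tilde g$, combined with the identity $\omega_P = \tau(\omega_{\tau^{-1}P})$ coming from $\omega\tau=\tau\omega$, yields
\[
\tau\bigl(\tilde g \circ \omega_{\tau^{-1}P}\bigr) = \tau(\tilde g) \circ \omega_P = \omega_Y \circ \tilde g = 0,
\]
where the last equality uses $Y \in \A/\omega$. Since $\tau$ is an automorphism and hence faithful, this forces $\tilde g \circ \omega_{\tau^{-1}P} = 0$, so the universal property of $\ol\omega_P$ produces a unique $g \colon \ol P \to Y$ with $g \ol\omega_P = \tilde g$. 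Then $qg\ol\omega_P = q\tilde g = f\ol\omega_P$ and the fact that $\ol\omega_P$ is epic imply $qg = f$, as required.

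The only potentially delicate point is ensuring the lift $\tilde g$ obtained from projectivity in $\A$ actually descends to $\ol P$; this is precisely what the naturality computation above guarantees. Everything else is a routine diagram chase, so no serious obstacle is expected.
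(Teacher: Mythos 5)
Your proof is correct and follows essentially the same strategy as the paper's: lift $f\ol\omega_P$ along $q$ using projectivity of $P$ in $\A$, show the lift kills $\omega_{\tau^{-1}P}$ via naturality of $\omega$, and then descend through the cokernel $\ol\omega_P$. The only cosmetic difference is in the naturality step: the paper applies the naturality square to $\tau^{-1}(\tilde g)\colon \tau^{-1}P \to \tau^{-1}Y$ and uses $\tau^{-1}Y \in \A/\omega$ directly (\Cref{rmk: tau-A/omega}), whereas you apply it at the level of $\tilde g$ itself and then invoke faithfulness of the automorphism $\tau$ to cancel it; both are immediate and equivalent.
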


\begin{proof}
	Consider an arbitrary admissible epic $p\colon Y \twoheadrightarrow Z$ in $\A/\omega$ and an arbitrary morphism $a\colon \ol P \to X$. Lifting $a \ol \omega_{P}$ along $p$, yields a morphism of the form $b$ as shown in the commutative diagram
	\begin{center}
		\begin{tikzcd}[sep=large]
			\tau^{-1} P \ar[r, "\omega_{\tau^{-1}P}", tail] \ar[d, "\tau^{-1} b", dashed] & P \ar[r, "\ol \omega_{P}", two heads] \ar[d, "b", dashed] & \ol P \ar[d, "a"] \ar[ld, dashed, "c"']\\
			\tau^{-1} Y \ar[r, "\omega_{\tau^{-1}Y}"'] & Y \ar[r, "p"', two heads] & Z.
		\end{tikzcd}
	\end{center}
	Since $\tau^{-1}Y \in \A/\omega$, see \Cref{rmk: tau-A/omega}, $b \omega_{\tau^{-1} P} = \omega_{\tau^{-1} Y} \tau^{-1}(b) =0$ and hence $b$ factors through $\ol \omega_{P}$. The resulting morphism $c$ satisfies $p c \ol \omega_{P} = p b = a \ol \omega_{P}$, and hence $p c = a$ since $\ol \omega_{P}$ is an epic.
\end{proof}

\section{Categories of factorizations}	

In this section, we define factorizations of a twisted homothety $(\tau, \omega)$ over an exact subcategory $\E$ of $\A$ into chains of monics in $\A$. It is shown that such factorizations form an exact category with the termwise exact structure. Under suitable hypotheses, we describe its projectives and injectives and show that it inherits the Frobenius property from $\E$. 

\begin{dfn} \label{dfn: Fac}
	Let $(\tau, \omega)$ be a twisted homothety on an exact category $\A$, $\E = \tau \E$ a fully exact subcategory of $\A$, and $l \in \NN$. By an \textbf{$\boldsymbol \E$-factorization} of $(\tau, \omega)$ with $l+1$ factors we mean an object
	\begin{center}
		\begin{tikzcd}[sep=large]
			X=(X, \alpha)\colon X^{0} \ar[r, tail, "\alpha^{0}"] & X^{1} \ar[r, tail, "\alpha^{1}"] & \cdots \ar[r, tail, "\alpha^{l-2}"] & X^{l-1} \ar[r, tail, "\alpha^{l-1}"] & X^l
		\end{tikzcd}
	\end{center}
	of $\Mor^{\textup{m}_\A}_l(\E)$, where $\omega_{X^l} = \tau(\alpha^{l-1} \cdots \alpha^{0}) \alpha^l$ for some morphism $\alpha^l\colon X^l \to \tau X^{0}$:
	
	\begin{center}
		\begin{tikzcd}[sep=large]
			&&&& X^l \ar[d, "\omega_{X_l}"] \ar[dllll, dashed, "\alpha^l"', bend right=3ex] \\
			\tau X^{0} \ar[r, tail, "\tau \alpha^{0}"] & \tau X^{1} \ar[r, tail, "\tau \alpha^{1}"] & \cdots \ar[r, tail, "\tau \alpha^{l-2}"] & \tau X^{l-1} \ar[r, tail, "\tau \alpha^{l-1}"] & \tau X^l
		\end{tikzcd}
	\end{center}

	Since $\tau(\alpha^{l-1} \cdots \alpha^0)$ is monic, $\alpha^l$ is unique if it exists. Note that $\alpha^k$ is only an admissible monic in $\A$, and its cokernel might not lie in $\E$. We denote the subcategory of $\Mor_{l}(\E)$ consisting of such factorizations by $\Fac^\E_{l+1}(\omega)$. The objects $X \in \Fac^\E_{l+1}(\omega)$ with $X^l \in \Proj(\A)$ form a subcategory ${}^0{\Fac}^\E_{l+1}(\omega)$.
\end{dfn}

\begin{rmk} \label{rmk: Fac-cok-A/omega}
	An object $X=(X, \alpha) \in \Mor^{\textup{m}_\A}_l(\E)$ lies in $\Fac^\E_{l+1}(\omega)$ if an only if the cokernel of $\alpha^{l-1} \cdots \alpha^0$ lies in $\A/\omega$, see \Cref{lem: A/omega-coker}.
\end{rmk}

\begin{rmk} \label{rmk: tau-restricted}
	Let $\tau$ be a fully exact equivalence on an exact category $\A$ and $\E$ a fully exact subcategory of $\A$. If $\E = \tau \E$, then $\tau$ restricts to a fully exact equivalence of $\E$. In this case, $A \in \Proj(\E)$ if and only if $\tau A \in \Proj(\E)$ for any $A \in \E$, and verbatim for injectives.
\end{rmk}

\begin{dfn}
	Let $\A$ be an exact category and $l \in \NN$. The \textbf{contraction} functor
	\[\gamma \colon \mMor_l(\A) \longrightarrow \mMor_1(\A), \; (X, \alpha) \longmapsto \left(\begin{tikzcd}[cramped, sep=large] X^0 \ar[r, tail, "\alpha^{l-1} \cdots \alpha^0"] & X^l\end{tikzcd}\right),\]
	is faithful, essentially surjective, and exact.
\end{dfn}

\begin{rmk}
	Given a twisted homothety $(\tau, \omega)$ on an exact category $\A$ and a fully exact subcategory $\E = \tau \E$ of $\A$, we have $X \in \Fac^\E_{l+1}(\omega)$ if and only if $\gamma X \in \Fac^\E_2(\omega)$ for any $X \in \Mor^{\textup{m}_\A}_l(\E)$, see \Cref{rmk: Fac-cok-A/omega}.
\end{rmk}

\begin{lem} \label{lem: Fact-fully-exact}
	Let $(\tau, \omega)$ be a twisted homothety on an exact category $\A$, $\E = \tau \E$ a fully exact subcategory, and $l \in \NN$.
	\begin{enumerate}
		\item The subcategory $\Mor^{\textup{m}_\A}_l(\E)$ is closed in $\Mor_l(\E)$ under extensions.
		\item \label{lem: Fact-fully-exact-co-kernels} The subcategory $\Fac^\E_{l+1}(\omega)$ is closed in $\Mor^{\textup{m}_\A}_l(\E)$ under kernels of epics and cokernels of monics.
		\item The subcategory ${}^0{\Fac}^\E_{l+1}(\omega)$ is closed in $\Fac^\E_{l+1}(\omega)$ under extensions.
	\end{enumerate}
	In particular, $\Mor^{\textup{m}_\A}_l(\E)$, $\Fac^\E_{l+1}(\omega)$, and ${}^0{\Fac}^\E_{l+1}(\omega)$ are fully exact subcategories of $\Mor_{l}(\E)$.
\end{lem}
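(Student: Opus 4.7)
The plan is to verify each of the three closure properties and then invoke \Cref{lem: fullyexact} to upgrade them to full exactness: parts (a) and (c) via extension-closedness (case \ref{lem: fullyexact-extclosed}), and part (b) via closedness under kernels of admissible epics and cokernels of admissible monics (case \ref{lem: fullyexact-kerclosed}).

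For (a), I take a short exact sequence $X \rightarrowtail Y \twoheadrightarrow Z$ in $\Mor_l(\E)$ with outer terms in $\Mor^{\textup{m}_\A}_l(\E)$, fix $k \in \{0,\ldots,l-1\}$, and show that $\alpha_Y^k$ is admissible monic in $\A$. I form the pushout $P$ of $X^{k+1} \hookleftarrow X^k \rightarrowtail Y^k$ along $\alpha_X^k$: by \Cref{prp: Buehler2.12}.\ref{prp: Buehler2.12-push}, the induced maps $Y^k \rightarrowtail P$ and $X^{k+1} \rightarrowtail P$ are admissible monics, and $X^{k+1} \rightarrowtail P \twoheadrightarrow Z^k$ is short exact; the universal property supplies a comparison $\psi \colon P \to Y^{k+1}$ with $\psi \circ (Y^k \rightarrowtail P) = \alpha_Y^k$. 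Dually, \Cref{prp: Buehler2.12}.\ref{prp: Buehler2.12-pull} realizes the pullback $\tilde Q$ of $Y^{k+1} \twoheadrightarrow Z^{k+1}$ along $\alpha_Z^k$ as an admissible monic $\tilde Q \rightarrowtail Y^{k+1}$ with parallel short exact sequence $X^{k+1} \rightarrowtail \tilde Q \twoheadrightarrow Z^k$. The induced comparison $P \to \tilde Q$ is a morphism of short exact sequences with identity outer columns, hence an isomorphism by the short five-lemma (a direct consequence of \Cref{lem: Noether}), and thus $\psi$, and therefore $\alpha_Y^k = \psi \circ (Y^k \rightarrowtail P)$, is admissible monic.

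For (b), by the criterion of \Cref{rmk: Fac-cok-A/omega}, membership in $\Fac^\E_{l+1}(\omega)$ amounts to the cokernel (in $\A$) of the composite $\alpha^{l-1} \cdots \alpha^0$ lying in $\A/\omega$. Given a short exact sequence $X \rightarrowtail Y \twoheadrightarrow Z$ in $\Mor^{\textup{m}_\A}_l(\E)$, the exact contraction functor $\gamma$ reduces to three composite admissible monics in $\A$ fitting into two short exact rows at positions $0$ and $l$; taking cokernels $C_X$, $C_Y$, $C_Z$ in $\A$ and invoking \Cref{lem: Noether} produces a short exact sequence $C_X \rightarrowtail C_Y \twoheadrightarrow C_Z$ in $\A$. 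By \Cref{lem: A/omega-fully-exact}, $\A/\omega$ is closed under subobjects and quotients of its objects, so $C_Y \in \A/\omega$, i.e.\ $Y \in \Fac^\E_{l+1}(\omega)$, forces $C_X, C_Z \in \A/\omega$. This simultaneously covers both the kernel-of-epic case (when additionally $Z \in \Fac^\E_{l+1}(\omega)$) and the cokernel-of-monic case (when additionally $X \in \Fac^\E_{l+1}(\omega)$).

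For (c), a short exact sequence in $\Fac^\E_{l+1}(\omega)$ with outer terms in ${}^0\Fac^\E_{l+1}(\omega)$ has $l$-th component $X^l \rightarrowtail Y^l \twoheadrightarrow Z^l$ a short exact sequence in $\E$ with $Z^l \in \Proj(\E)$; the sequence splits and $Y^l \cong X^l \oplus Z^l$ is projective. The main obstacle is the argument in (a): the stated form of \Cref{lem: Noether} completes the right column of a $3 \times 3$ diagram, not the middle, so one must either pass through the pushout--pullback construction sketched above or invoke an appropriate symmetric variant of the $3 \times 3$ lemma to conclude admissibility of $\alpha_Y^k$.
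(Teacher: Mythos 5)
Your proof is correct in substance, and parts (b) and (c) match the paper essentially verbatim: for (b) the paper also passes through \Cref{lem: Noether} applied to the contracted composites, then uses \Cref{lem: A/omega-fully-exact} and \Cref{rmk: Fac-cok-A/omega} to deduce $X,Z\in\Fac^\E_{l+1}(\omega)$ from $Y\in\Fac^\E_{l+1}(\omega)$; for (c) the paper simply observes that $\Proj(\E)$ is closed under extensions, which is what your splitting argument proves.

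For (a), however, you take a considerably longer route than the paper. The paper disposes of (a) in one line by citing the five lemma, \cite[Cor.~3.2]{Buh10}, which says directly that in a morphism of short exact sequences with the two outer vertical maps admissible monics, the middle vertical map is also an admissible monic; applied to the commutative ladder at positions $k$ and $k+1$, this gives admissibility of $\alpha_Y^k$ at once. Your pushout--pullback construction is in effect a proof of this very case of the five lemma. It is workable, but it has a citation gap: \Cref{prp: Buehler2.12}.\ref{prp: Buehler2.12-pull} gives you the bicartesian square and the parallel short exact sequence $X^{k+1}\rightarrowtail\tilde Q\twoheadrightarrow Z^k$, but it does \emph{not} directly assert that the pullback leg $\tilde Q\to Y^{k+1}$ is an admissible monic. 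To get that, you need the extra (standard but nontrivial) fact that the pullback of an admissible monic along an admissible epic is an admissible monic; the cleanest way to see it here is to observe that $\tilde Q=\ker\bigl(Y^{k+1}\twoheadrightarrow Z^{k+1}\twoheadrightarrow\cok(\alpha_Z^k)\bigr)$, a composite of admissible epics, so $\tilde Q\rightarrowtail Y^{k+1}$ is a kernel inclusion. Without this step, the chain $\alpha_Y^k=(\tilde Q\rightarrowtail Y^{k+1})\circ(P\cong\tilde Q)\circ(Y^k\rightarrowtail P)$ is not known to be a composite of admissible monics. Once patched, your argument is sound; but note that your closing comment about \Cref{lem: Noether} completing only the right column is a red herring for the paper's approach, since the paper's cited five lemma is precisely the middle-column statement, and the issue does not arise.
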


\begin{proof} \
	\begin{enumerate}
		\item This is a direct consequence of the Five lemma, see {\cite[Cor.~3.2]{Buh10}}.
		
		\item By \Cref{lem: Noether}, any short exact sequence $\begin{tikzcd}[cramped, sep=small] (X, \alpha) \ar[r, tail] & (Y, \beta) \ar[r, two heads] & (Z, \gamma) \end{tikzcd}$ in $\Mor^{\textup{m}_\A}_l(\E)$ induces a short exact sequence $\begin{tikzcd}[cramped, sep=small] \cok(\alpha^{l-1} \cdots \alpha^0) \ar[r, tail] & \cok(\beta^{l-1} \cdots \beta^0) \ar[r, two heads] & \cok(\gamma^{l-1} \cdots \gamma^0) \end{tikzcd}$ of cokernels. Due \Cref{lem: A/omega-fully-exact} and \Cref{rmk: Fac-cok-A/omega}, $Y \in \Fac^\E_{l+1}(\omega)$ then implies $X, Z \in \Fac^\E_{l+1}(\omega)$, and the claim follows.
		
		\item This holds, since $\Proj(\A) \cap \E$ is closed in $\E$ under extensions.
	\end{enumerate}
	The particular statement follows from \Cref{lem: fullyexact}.
\end{proof}

\begin{rmk} \label{rmk: circle-zig-zag}
	Let $(\tau, \omega)$ be a twisted homothety on an exact category $\A$, $\E = \tau \E$ a fully exact subcategory, and $l \in \NN$. For any $X=(X, \alpha) \in \Fac^\E_{l+1}(\omega)$, we have $\omega_{X^{k}} = \tau (\alpha^{k-1}\cdots \alpha^{0})\alpha^l\cdots\alpha^{k}$ by postcomposing with the monic $\tau(\alpha^{l-1}\cdots \alpha^{k})$ for any $k \in \{0, \dots, l-1\}$:
	\begin{center}
		\begin{tikzcd}[sep={20mm,between origins}]
			X^{0} \ar[r, tail, "\alpha^{0}"] \ar[d, "\omega_{X^{0}}"] & \cdots \ar[r, tail, "\alpha^{k-1}"] & X^{k} \ar[r, tail, "\alpha^{k}"] & \cdots \ar[r, tail, "\alpha^{l-1}"] & X^l \ar[d, "\omega_{X^{l}}"] \ar[dllll, "\alpha^l"' near end] \\
			\tau X^{0} \ar[r, tail, "\tau \alpha^{0}"'] & \cdots \ar[r, tail, "\tau \alpha^{k-1}"'] & \tau X^{k} \ar[r, tail, "\tau \alpha^{k}"'] \ar[u, <-, "\omega_{X^{k}}"' near end, crossing over] & \cdots \ar[r, tail, "\tau \alpha^{l-1}"'] & \tau X^l 
		\end{tikzcd}
	\end{center}
\end{rmk}

\begin{rmk} \label{rmk: circle-morphisms}
	Let $(\tau, \omega)$ be a twisted homothety on an exact category $\A$, $\E = \tau \E$ a fully exact subcategory of $\A$, and $l \in \NN$. For any morphism $f\colon (X, \alpha) \to (Y, \beta)$ in $\Fac^\E_{l+1}(\omega)$, we obtain $\tau(f^0) \alpha^l = \beta^l f^l$ by postcomposing with the monic $\tau(\beta^{l-1} \cdots \beta^0)$:
	\begin{center}
		\begin{tikzcd}[sep={22.5mm,between origins}]
			X^l \ar[r, "\alpha^l"'] \ar[d, "f^l"] \ar[rr, bend left=7.5mm, "\omega_{X^l}"] & \tau X^0 \ar[r, tail, "\tau(\alpha^{l-1} \cdots \alpha^0)"'] \ar[d, "\tau f^0"] & \tau X^l \ar[d, "\tau f^l"] \\
			Y^l \ar[r, "\beta^l"] \ar[rr, bend right=7.5mm, "\omega_{Y^l}"'] & \tau Y^0 \ar[r, tail, "\tau(\beta^{l-1} \cdots \beta^0)"] & \tau Y^l
		\end{tikzcd}
	\end{center}
	Thus, sending $(X, \alpha) \in \Fac^\E_{l+1}(\omega)$ to the \enquote{helicoidal} sequence
	\begin{center}
		\begin{tikzcd}
			\cdots \ar[r, tail, "\tau^{-1} \alpha^{l-1}"] & \tau^{-1} X^l \ar[r, "\tau^{-1} \alpha^l"] & X^{0} \ar[r, tail, "\alpha^{0}"] & \cdots \ar[r, tail, "\alpha^{l-1}"] & X^l \ar[r, "\alpha^l"] & \tau X^0 \ar[r, tail, "\tau \alpha^0"] & \cdots \ar[r, tail, "\tau \alpha^{l-1}"] & \tau X^l \ar[r, "\tau \alpha^l"] & \cdots
		\end{tikzcd}
	\end{center}
	gives rise to an embedding of $\Fac^\E_{l+1}(\omega)$ into the category of diagrams of type $A_\infty^\infty$ in $\E$, where $A_\infty^\infty$ is the unidirectional linear quiver, infinite on both sides. If $\tau = \id_\A$, sending $(X, \alpha) \in \Fac^\E_{l+1}(\omega)$ to
	\begin{center}
		\begin{tikzcd}[sep=large]
			X^{0} \ar[r, tail, "\alpha^{0}"] & X^{1} \ar[r, tail, "\alpha^{1}"] & \cdots \ar[r, tail, "\alpha^{l-2}"] & X^{l-1} \ar[r, tail, "\alpha^{l-1}"] & X^l, \ar[llll, bend left=7mm, "\alpha^l"]
		\end{tikzcd}
	\end{center}
	gives rise to an embedding of $\Fac^\E_{l+1}(\omega)$ into the category of diagrams of type $C_{l+1}$ in $\E$, where $C_{l+1}$ is the unidirectional circular quiver of size $l+1$.
\end{rmk}

\begin{rmk} \label{rmk: alpha-monic}
	Suppose in \Cref{dfn: Fac} that $\A$ is weakly idempotent complete and $\omega$ regular on $\E$. Then $\alpha^l$ is an admissible monic in $\A$ due to \Cref{prp: WIC}.\ref{prp: WIC-comp-monic}.
\end{rmk}

In view of \Cref{rmk: circle-zig-zag,rmk: alpha-monic,rmk: circle-morphisms}, $\Fac^\E_{l+1}(\omega)$ generalizes the category of matrix factorizations with $l+1$ factors. Under the respective assumptions, it has a circular symmetry, up to a twist by $\tau$.

\begin{dfn}
	Let $(\tau, \omega)$ be a twisted homothety on an exact category $\A$, $\E = \tau \E$ a fully exact subcategory of $\A$, and $l \in \NN$. For any $X=(X, \alpha) \in \Fac^\E_{l+1}(\omega)$, where $\alpha^l$ is a monic in $\A$, we define its \textbf{rotation} and \textbf{reverse rotation} as the objects
	\begin{center}
		\begin{tikzcd}[sep=large]
			\Theta X = (\Theta X, \beta) \colon \; X^1 \ar[r, tail, "\alpha^1"] & X^2 \ar[r, tail, "\alpha^2"] & \cdots \ar[r, tail, "\alpha^{l-1}"] & X^l \ar[r, tail, "\alpha^l"] & \tau X^0
		\end{tikzcd}
	\end{center}
	and
	\begin{center}
		\begin{tikzcd}[sep=large]
			\Theta^{-1} X = (\Theta^{-1} X, \beta) \colon \; \tau^{-1} X^l \ar[r, tail, "\tau^{-1} \alpha^l"] & X^0 \ar[r, tail, "\alpha^0"] & \cdots \ar[r, tail, "\alpha^{l-3}"] & X^{l-2} \ar[r, tail, "\alpha^{l-2}"] & X^{l-1}
		\end{tikzcd}
	\end{center}
	of $\Fac^\E_{l+1}(\omega)$, where $\beta ^l=\tau \alpha^0$ and $\beta ^l=\alpha^{l-1}$, respectively, see \Cref{rmk: circle-zig-zag}. If $\A$ is weakly idempotent complete and $(\tau, \omega)$ regular on $\E$, rotation defines an automorphism of $\Fac^\E_{l+1}(\omega)$ such that $\Theta^{l+1} = \tau$, see \Cref{rmk: circle-morphisms,rmk: alpha-monic}.
\end{dfn}

\begin{dfn}
	Let $(\tau, \omega)$ be a twisted homothety on an exact category $\A$, $\E = \tau \E$ a fully exact subcategory of $\A$, and $l \in \NN$. Then $\mu_{l+1}$ from \Cref{ntn: mMor}.\ref{ntn: mMor-mu} defines a functor 
	\[\nu^l \colon \E \longrightarrow \Fac^\E_{l+1}(\omega), \; A \longmapsto \nu^l(A) = (\nu^l(A), \alpha) := \mu_{l+1}(A),\]
	where $\alpha^l=\omega_A$. Suppose that $(\tau, \omega)$ is regular on $\E$.
	For $k \in \{0, \dots, l-1\}$, any $A \in \E$ gives rise to an object
	\begin{center}
		\begin{tikzcd}
			\nu^k(A)=(\nu^k(A), \alpha)\colon A \ar[r, equal] & \cdots \ar[r, equal] & A \ar[r, tail, "\omega_A"] & \tau A \ar[r, equal] & \cdots \ar[r, equal] & \tau A
		\end{tikzcd}
	\end{center}
	of $\Fac^\E_{l+1}(\omega)$, where $\alpha^{k} = \omega_A$ and $\alpha^l=\id_A$. This defines a functor $\nu^k\colon \E \to \Fac^\E_{l+1}(\omega)$. We refer to the objects $\nu^0(A), \dots, \nu^l(A) \in \Fac^\E_{l+1}(\omega)$ as \textbf{trivial factorizations}.
\end{dfn}

\begin{rmk} \label{rmk: nu-Theta} Let $(\tau, \omega)$ be a twisted homothety on an exact category $\A$, regular on a fully exact subcategory $\E = \tau \E$ of $\A$, $l \in \NN$, and $k \in \{0,\dots,l\}$.
	\begin{enumerate}
		\item \label{rmk: nu-Theta-1} The functor $\nu^k$ is exact, since $\tau$ is so.
		
		\item \label{rmk: nu-Theta-2} We have $\Theta \nu^{k+1} = \nu^k$ for $k < l$ and $\Theta \nu^0 = \nu^l\tau$.
		
		\item \label{rmk: nu-Theta-3} We have $\pi^k \Theta = \pi^{k+1}$ for $k <l$ and $\pi^l \Theta = \tau \pi^0$, whenever application of $\Theta$ is defined.
	\end{enumerate}
\end{rmk}

\begin{lem} \label{lem: nu-adjunction}
	Let $(\tau, \omega)$ be a twisted homothety on an exact category $\A$, $\E = \tau \E$ a fully exact subcategory of $\A$, $l \in \NN$, and $k \in \{0, \dots, l\}$. For statements involving $\nu^k$ for $k<l$, suppose that $(\tau, \omega)$ is regular on $\E$. There are the following adjunctions:
	\begin{enumerate}
		\item \label{lem: nu-adjunction-left-0} $\nu^l \dashv \pi^0$, given by $\Hom_{\Fac^\E_{l+1}(\omega)}(\nu^l(-), -) \cong \Hom_\E(-, \pi^0(-))$, 
		$g \longmapsto g^0$,
		
		\item \label{lem: nu-adjunction-left} 
 				$\nu^{k-1} \dashv \tau^{-1}\pi^k$, for $k > 0$, given by $\Hom_{\Fac^\E_{l+1}(\omega)}(\nu^{k-1}(-), -) \cong \Hom_\E(\tau(-), \pi^k(-))$, 
				$g \longmapsto g^k$,
		
		\item \label{lem: nu-adjunction-right}
				$\pi^k \dashv \nu^k$, given by $\Hom_\E(\pi^k(-), -) \cong \Hom_{\Fac^\E_{l+1}(\omega)}(-, \nu^k(-))$, $g^k \reflectbox{ $\longmapsto$ } g$.
	\end{enumerate}
	In particular, $\nu^k(A)$ is projective, or, injective, in $\Fac^\E_{l+1}(\omega)$ if $A \in \E$ is so.
\end{lem}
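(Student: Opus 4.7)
The plan is to establish each adjunction by writing down the inverse of the proposed hom-set map and checking that the construction yields a morphism in $\Fac^\E_{l+1}(\omega)$; bijectivity then follows from the forced shape of a morphism $\nu^k(A) \to X$ or $X \to \nu^k(A)$, and naturality in both variables is a formality. Two facts are used repeatedly: the zig-zag identity $\omega_{X^k} = \tau(\alpha^{k-1}\cdots\alpha^0)\,\alpha^l\cdots\alpha^k$ from \Cref{rmk: circle-zig-zag} (which also holds at $k=l$ by definition of $\Fac^\E_{l+1}(\omega)$), and the naturality $\omega_Y f = \tau(f)\,\omega_X$ of $\omega$.

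For (a), the identity arrows of $\nu^l(A)$ force any $g\colon\nu^l(A)\to X$ to satisfy $g^j = \alpha^{j-1}\cdots\alpha^0\, g^0$, so $g$ is determined by $g^0$. Conversely, given $h\colon A\to X^0$, the formulas $g^j := \alpha^{j-1}\cdots\alpha^0\, h$ define a compatible family in $\Mor_l(\E)$, and the only remaining condition, the $\alpha^l$-compatibility $\tau(g^0)\,\omega_A = \alpha^l_X\, g^l$, reduces via the zig-zag identity at $k=0$ to $\tau(h)\,\omega_A = \omega_{X^0} h$, which is just naturality.

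For (b), the identity arrows of $\nu^{k-1}(A)$ above position $k$ give $g^j = \alpha^{j-1}\cdots\alpha^k\, g^k$ for $j\geq k$; the $\alpha^l$-compatibility then determines $g^0 = \tau^{-1}(\alpha^l\cdots\alpha^k\, g^k)$, and the identity arrows below position $k-1$ yield $g^j = \alpha^{j-1}\cdots\alpha^0\, g^0$ for $j<k$, so $g$ is determined by $g^k$. Given $h\colon\tau A\to X^k$, defining $g$ by these formulas, the one nontrivial compatibility at position $k-1$ reads $h\,\omega_A = \alpha^{k-1}\cdots\alpha^0\,\tau^{-1}(\alpha^l\cdots\alpha^k\, h)$; applying $\tau^{-1}$ to the zig-zag identity at $k$ rewrites the right-hand side as $\omega_{\tau^{-1}X^k}\,\tau^{-1}(h)$, which by naturality equals $h\,\omega_A$. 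Dually for (c): given $h\colon X^k\to A$, the identity arrows of $\nu^k(A)$ prescribe $g^j = h\,\alpha^{k-1}\cdots\alpha^j$ for $j\leq k$, and for $k<l$ the $\alpha^l$-compatibility forces $g^j = \tau(h)\,\tau(\alpha^{k-1}\cdots\alpha^0)\,\alpha^l\cdots\alpha^j$ for $j>k$, so $g$ is determined by $g^k$; the compatibility $\omega_A\, g^k = g^{k+1}\alpha^k$ reduces via the zig-zag identity at $k$ to $\omega_A h = \tau(h)\,\omega_{X^k}$, which is naturality. The case $k=l$ works analogously, with the $\alpha^l$-compatibility itself encoding $\omega_A h = \tau(h)\,\omega_{X^l}$.

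The final assertion is formal: the right adjoints $\pi^0$, $\tau^{-1}\pi^k$, and $\pi^k$ are all exact, as $\pi^j$ is termwise exact by definition of the exact structure on $\Mor_l(\E)$ and $\tau^{\pm 1}$ is fully exact; hence the left adjoints $\nu^l$ and $\nu^{k-1}$ preserve projectives, while $\nu^k$ as a right adjoint of the exact functor $\pi^k$ preserves injectives. The main obstacle throughout is purely the bookkeeping: for each $k$ one must correctly identify which arrow in $\nu^k(A)$ carries $\omega_A$ rather than an identity, and recognize that the sole compatibility that does not hold tautologically collapses, after one substitution, to a composition of the zig-zag identity with naturality of $\omega$.
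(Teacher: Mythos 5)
Your proposal is correct and follows essentially the same route as the paper: for each adjunction you show that a morphism to or from a trivial factorization is determined by a single component, and that the one non-tautological square compatibility collapses, via the zig-zag identity of \Cref{rmk: circle-zig-zag} and naturality of $\omega$, to an identity. The paper proves only part (b) in full (declaring (a) obvious and (c) dual) and derives the projectivity/injectivity claim by exhibiting the relevant Hom-functor as an exact composite, which is the same content as your appeal to adjoints of exact functors preserving projectives/injectives.
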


\begin{proof}
	Part \ref{lem: nu-adjunction-left-0} is obvious. We only prove \ref{lem: nu-adjunction-left} since \ref{lem: nu-adjunction-right} is similar. For injectivity, consider a morphism
	\begin{center}
		\begin{tikzcd}[sep={17.5mm,between origins}]
			\nu^{k-1}(A) \ar[d, "g"] & A \ar[r, equal] \ar[d, "g^0"] & \cdots \ar[r, equal] & A \ar[r, tail, "\omega_A"] \ar[d, "g^{k-1}"] & \tau A \ar[r, equal] \ar[d, "g^k"] & \tau A \ar[r, equal] \ar[d, "g^{k+1}"] & \cdots \ar[r, equal] & \tau A \ar[d, "g^l"]\\
			X & X^0 \ar[r, tail, "\alpha^{0}"] & \cdots \ar[r, tail, "\alpha^{k-2}"] & X^{k-1} \ar[r, tail, "\alpha^{k-1}"] & X^k \ar[r, tail, "\alpha^{k}"] & X^{k+1} \ar[r, tail, "\alpha^{k+1}"] & \cdots \ar[r, tail, "\alpha^{l-1}"] & X^l,
		\end{tikzcd}
	\end{center}
	in $\Fac^\E_{l+1}(\omega)$, where $A \in \E$, and suppose that $g^k$ is zero. Then so are $g^j = \alpha^{j-1} \cdots \alpha^k g^k$ for $j \in \{k+1,\dots,l\}$. Due to \Cref{rmk: circle-morphisms}, the diagram
	\begin{equation*}
		\begin{tikzcd}[sep={17.5mm,between origins}]
			\tau A \ar[d, "g^l"] \ar[r, equal] & \tau A \ar[r, equal] \ar[d, "\tau g^0"] & \cdots \ar[r, equal] & \tau A \ar[d, "\tau g^{k-1}"] \\
			X^l \ar[r, "\alpha^l"] & \tau X^0 \ar[r, tail, "\tau \alpha^{0}"] & \cdots \ar[r, tail, "\tau \alpha^{k-2}"] & \tau X^{k-1},
		\end{tikzcd}
	\end{equation*}
	commutes, and hence $\tau(g^j)=\tau(\alpha^{j-1} \cdots \alpha^0) \alpha^l g^l =0$ for $j \in \{0, \dots,k-1\}$.
	It follows that $g=0$ since $\tau$ is faithful. For surjectivity, consider $X \in \Fac^\E_{l+1}(\omega)$ and suppose that a morphism $g^k \colon \tau A \to X^k$ in $\E$ is given. Define $g^0 := \tau^{-1}(\alpha^l \cdots \alpha^k g^k)$, and set $g^j := \alpha^{j-1} g^{j-1}$ for $j \in \{1, \dots, k-1, k+1, \dots l\}$. We obtain a diagram
	\begin{center}
		\begin{tikzcd}[sep={17.5mm,between origins}]
			A \ar[rr, tail, equal] \ar[d, "\tau^{-1} g^k"] && A \ar[r, equal] \ar[d, dashed, "g^0"] & \cdots \ar[r, equal] & A \ar[r, tail, "\omega_A"] \ar[d, dashed, "g^{k-1}"] & \tau A \ar[r, equal] \ar[d, "g^k"] & \tau A \ar[r, equal] \ar[d, dashed, "g^{k+1}"] & \cdots \ar[r, equal] & \tau A \ar[d, dashed, "g^l"]\\
			\tau^{-1} X^k \ar[rr, "\tau^{-1}(\alpha^l \cdots \alpha^k)"] \ar[rrrrr, "\omega_{X^k}"', bend right=5mm] && X^0 \ar[r, tail, "\alpha^{0}"] & \cdots \ar[r, tail, "\alpha^{k-2}"] & X^{k-1} \ar[r, tail, "\alpha^{k-1}"] & X^k \ar[r, tail, "\alpha^{k}"] & X^{k+1} \ar[r, tail, "\alpha^{k+1}"] & \cdots \ar[r, tail, "\alpha^{l-1}"] & X^l,
		\end{tikzcd}
	\end{center}
	which commutes due to \Cref{rmk: circle-zig-zag}. Thus, the desired preimage is the morphism
	\[g=(g^0, \dots, g^l) \colon \nu^l(A) \longrightarrow X\]
	in $\Fac^\E_{l+1}(\omega)$. The naturality of this bijection is obvious.\\
	For the particular claim, let $k \in \{0, \dots, l-1\}$. For $A \in \Proj(\E)$, also $\tau A \in \Proj(\E)$, see \Cref{rmk: tau-restricted}. Then \[\Hom_{\Fac^\E_{l+1}(\omega)}(\nu^k(A), -) \cong \Hom_\E(\tau A, \pi^{k+1}(-)) \hspace{5mm} \textup{and} \hspace{5mm} \Hom_{\Fac^\E_{l+1}(\omega)}(\nu^l(A), -) \cong \Hom_\E(A, \pi^0(-))\] are exact functors. This means that $\nu^k(A), \nu^l(A) \in \Proj\left(\Fac^\E_{l+1}(\omega)\right)$. The statement on injectives follows similarly.
\end{proof}

In the module case, the following observation by Sun and Zhang opens the way to their approach by means of Frobenius functors, see {\cite[7]{SZ26}}.

\begin{rmk}
	Let $(\tau, \omega)$ be a twisted homothety on an exact category $\A$, regular on a fully exact subcategory $\E = \tau \E$ of $\A$, $l \in \NN$, and $k \in \{0,\dots,l\}$. Combining \Cref{lem: nu-adjunction} and \Cref{rmk: nu-Theta}.\ref{rmk: nu-Theta-2} and \ref{rmk: nu-Theta-3}, we obtain Frobenius pairs
	\begin{enumerate}
		\item $(\nu^k, \tau^{-1} \pi^{k+1})$ for $k<l$ and $(\nu^l, \pi^0)$ of type $(\tau, \Theta^{-1})$ since $\nu^k \vdash \tau^{-1}\pi^{k+1} \vdash \nu^{k+1} \tau = \Theta^{-1} \nu^k \tau$ for $k < l$ and $\nu^l \vdash \pi^0 \vdash \nu^0=\Theta^{-1} \nu^l \tau$,
		\item $(\pi^k, \nu^k)$ of type $(\Theta, \tau^{-1})$, if $\A$ is weakly idempotent complete, since $\pi^k \vdash \nu^k \vdash \tau^{-1} \pi^{k+1} = \tau^{-1} \pi^k \Theta$ for $k < l$ and $\pi^l \vdash \nu^l \vdash \pi^0 = \tau^{-1} \pi^l \Theta$.
	\end{enumerate}
\end{rmk}

\begin{lem} \label{lem: nu-admissible}
	Let $(\tau, \omega)$ be a twisted homothety on an exact weakly idempotent complete category $\A$, regular on a fully exact subcategory $\E = \tau \E$ of $\A$, and $l \in \NN$. In $\Fac^\E_{l+1}(\omega)$, any object $X=(X,\alpha)$ admits
	\begin{enumerate}
		\item \label{lem: nu-admissible-epic} an admissible epic $\epic{\nu^l(X^0) \oplus \bigoplus_{k=1}^l \nu^{k-1}(\tau^{-1}X^k)}{X}$,
		\item \label{lem: nu-admissible-monic} an admissible monic $\monic{X}{\bigoplus_{k=0}^l \nu^k(X^k)}$.
	\end{enumerate}
\end{lem}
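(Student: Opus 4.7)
The plan is to build $p$ and $q$ from the adjunctions in \Cref{lem: nu-adjunction}, verify each is termwise split with an identity summand at a distinguished slot, and then upgrade the resulting short exact sequence from $\Mor_l(\E)$ to $\Fac^\E_{l+1}(\omega)$ via \Cref{lem: Fact-fully-exact}.\ref{lem: Fact-fully-exact-co-kernels}.

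For (a), \Cref{lem: nu-adjunction}.\ref{lem: nu-adjunction-left-0} and \Cref{lem: nu-adjunction}.\ref{lem: nu-adjunction-left} supply unique morphisms $\nu^l(X^0) \to X$ with $0$-th component $\id_{X^0}$ and $\nu^{k-1}(\tau^{-1}X^k) \to X$ with $k$-th component $\id_{X^k}$ for $k=1,\dots,l$; let $p$ be their direct sum. At position $j$, the source decomposes as $\bigoplus_{k=0}^{j} X^k \oplus \bigoplus_{k=j+1}^{l} \tau^{-1}X^k$, and the $X^j$-component of $p^j$ is $\id_{X^j}$; hence $p^j$ is a split, and therefore admissible, epic in $\E$. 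The source $P$ has admissible monic differentials, being sums of identities and admissible monics $\omega_A$ (by regularity). Hence the composition $K^j \hookrightarrow P^j \hookrightarrow P^{j+1}$ is admissible monic in $\A$; writing it as $K^j \to K^{j+1} \hookrightarrow P^{j+1}$ and applying \Cref{prp: WIC}.\ref{prp: WIC-comp-monic} shows that $K^j \to K^{j+1}$ is admissible monic in $\A$. Thus $K \in \Mor^{\textup{m}_\A}_l(\E)$, and \Cref{lem: Fact-fully-exact}.\ref{lem: Fact-fully-exact-co-kernels} yields $K \in \Fac^\E_{l+1}(\omega)$.

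For (b), dually use \Cref{lem: nu-adjunction}.\ref{lem: nu-adjunction-right} to obtain, for each $k \in \{0,\dots,l\}$, a unique morphism $X \to \nu^k(X^k)$ with $k$-th component $\id_{X^k}$; let $q$ be their direct sum. Each $q^j$ includes identically into the $\nu^j(X^j)^j = X^j$ summand of the target, and is therefore split monic in $\E$. The main obstacle is to see that the cokernel $C$ lies in $\Mor^{\textup{m}_\A}_l(\E)$, since \Cref{prp: WIC}.\ref{prp: WIC-comp-monic} has no direct dual to apply to cokernel differentials. We handle this by direct computation: choosing splittings $P^j \cong X^j \oplus C^j$, the differential $\delta^j\colon P^j \to P^{j+1}$ takes a block form, and a unipotent change of basis on $C^j$ absorbing the off-diagonal contributions of $q^{j+1}$ reduces the induced map $C^j \to C^{j+1}$ to a block-diagonal map with identity blocks on the unaffected summands and a single block $\tau(\alpha^{j-1}\cdots\alpha^0)\,\alpha^l\cdots\alpha^{j+1}\colon X^{j+1} \to \tau X^j$. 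This composition is admissible monic (using \Cref{rmk: alpha-monic} for $\alpha^l$), so $C^j \to C^{j+1}$ is admissible monic in $\A$, and \Cref{lem: Fact-fully-exact}.\ref{lem: Fact-fully-exact-co-kernels} again concludes.
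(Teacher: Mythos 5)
Your argument for part (a) is a genuine and welcome simplification over the paper's. The paper constructs the same termwise split admissible epic via the adjunctions of \Cref{lem: nu-adjunction}, explicitly writes the kernel $\widehat X$ with components $X^{\widehat j}=\bigoplus_{k<j}X^k\oplus\bigoplus_{k>j}\tau^{-1}X^k$, and then performs an explicit matrix computation (column/row operations via a left inverse) to exhibit the differential $\psi^j$ as isomorphic to a visible admissible monic. You avoid that computation entirely: once you know $P$ has admissible monic differentials and the sequence is termwise split, the composite $K^j\rightarrowtail P^j\rightarrowtail P^{j+1}$ is admissible monic, and \Cref{prp: WIC}.\ref{prp: WIC-comp-monic} peels off the factor $K^j\to K^{j+1}$. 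This is cleaner and more conceptual, and it correctly isolates exactly where weak idempotent completeness enters. You also rightly note that this trick has no dual for part (b), since there is no corresponding statement ``if $gf$ is admissible monic then $g$ is'' — a $3\times3$/Noether-lemma argument also fails here (a termwise admissible monic of short exact sequences can have a non-monic induced map on cokernels), so the explicit change-of-basis computation you give is genuinely needed, and it matches in spirit what the paper does (and tacitly dualizes for (b)). Your formula $\tau(\alpha^{j-1}\cdots\alpha^0)\,\alpha^l\cdots\alpha^{j+1}\colon X^{j+1}\to\tau X^j$ for the surviving block (up to a sign, which is immaterial) is correct.

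One small gap you should close in both parts: concluding $K\in\Mor^{\textup{m}_\A}_l(\E)$ (resp.\ $C\in\Mor^{\textup{m}_\A}_l(\E)$) requires not only that the differentials be admissible monics in $\A$, but also that $K^j\in\E$ (resp.\ $C^j\in\E$). A fully exact subcategory need not be closed under kernels of retractions or under direct summands, so ``split epic in $\E$'' does not by itself give you the kernel as an object of $\E$. The fix is the same explicit identification the paper makes: since $p^j$ has identity as one matrix entry, a unipotent automorphism of $P^j$ takes $p^j$ to the standard projection, identifying $K^j\cong\bigoplus_{k\neq j}P^j_k$, which is visibly a direct sum of objects of $\E$; dually for $C^j\cong\bigoplus_{k\neq j}Q^j_k$. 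You are already performing exactly this change of basis for part (b), so you only need to say explicitly that it exhibits the component objects as members of $\E$ before invoking \Cref{lem: Fact-fully-exact}.\ref{lem: Fact-fully-exact-co-kernels}.
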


\begin{proof}
	We only prove \ref{lem: nu-admissible-epic} since \ref{lem: nu-admissible-monic} is similar. For $j \in \{0, \dots, l\}$, set
	\[X^{\widehat j} := \bigoplus_{k \in \{0, \dots, j-1\}} X^k \oplus \bigoplus_{k \in \{j+1, \dots, l\}} \tau^{-1}X^k \in \E \hspace{2.5mm} \textup{and} \hspace{2.5mm} \tilde X = (\tilde X, \varphi) := \nu^l(X^0) \oplus \bigoplus_{k=1}^l \nu^{k-1}(\tau^{-1} X^k).\] Under the adjunctions from \Cref{lem: nu-adjunction}.\ref{lem: nu-adjunction-left-0} and \ref{lem: nu-adjunction-left}, the identities $\id_{X^j}$ correspond to morphisms $\nu^l(X^0) \to X$ and $\nu^{j-1}(\tau^{-1}X^j) \to X$ in $\Fac^\E_{l+1}(\omega)$ for $j>0$. Combined, these form a termwise split admissible epic $\epic{\tilde X}{X}$ in $\Mor_l(\E)$ with kernel $\widehat X = (\widehat X, \psi)$ as follows:
	\begin{center}
		\begin{tikzcd}[sep={25mm,between origins}, ampersand replacement=\&]
			\widehat X \mathrlap \colon \ar[d, tail] \& \cdots \ar[r] \& X^{\widehat j} \ar[r, "\psi^j"] \ar[tail]{d}{\begin{pmatrix} - \beta_j \\ \id_{X^{\widehat j}} \end{pmatrix}} \& X^{\widehat{j+1}} \ar[r] \ar[tail]{d}{\begin{pmatrix} - \beta_{j+1} \\ \id_{X^{\widehat{j+1}}} \end{pmatrix}} \& \cdots \\
			\tilde X \mathrlap \colon \ar[d, two heads] \& \cdots \ar[r, tail] \& X^j \oplus X^{\widehat j} \ar[r, tail, "\varphi^j"] \ar[two heads]{d}{\begin{pmatrix} \id_{X^j} & \beta_j \end{pmatrix}} \& X^{j+1} \oplus X^{\widehat{j+1}} \ar[r, tail] \ar[two heads]{d}{\begin{pmatrix} \id_{X^{j+1}} & \beta_{j+1} \end{pmatrix}} \& \cdots \\
			X \mathrlap \colon \& \cdots \ar[r, tail] \& X^{j} \ar[r, tail, "\alpha^j"] \& X^{j+1} \ar[r, tail] \& \cdots,
		\end{tikzcd}
	\end{center}
	where
	\[\varphi^j = \left(\begin{array}{c|ccccccc}
		0 & 0 & \cdots & 0 & \omega_{\tau^{-1}X^{j+1}} & 0 & \cdots & 0 \\ \hline
		0 & \id_{X^0} & \cdots & 0 & 0 & 0 & \cdots & 0 \\
		\vdots & \vdots & \ddots & \vdots & \vdots & \vdots & \ddots & \vdots \\
		0 & 0 & \cdots & \id_{X^{j-1}} & 0 & 0 & \cdots & 0 \\
		\id_{X^j} & 0 & \cdots & 0 & 0 & 0 & \cdots & 0 \\
		0 & 0 & \cdots & 0 & 0 & \id_{\tau^{-1} X^{j+2}} & \cdots & 0 \\
		\vdots & \vdots & \ddots & \vdots & \vdots & \vdots & \ddots & \vdots \\
		0 & 0 & \cdots & 0 & 0 & 0 & \cdots & \id_{\tau^{-1} X^l}
	\end{array}\right)\] 
	and
	\[\beta_j := \begin{pmatrix}
		\alpha^{j-1} \cdots \alpha^0 & \cdots & \alpha^{j-1} & \alpha^{j-1} \cdots \alpha^0 \tau^{-1}(\alpha^l \cdots \alpha^{j+1}) & \cdots & \alpha^{j-1} \cdots \alpha^0 \tau^{-1}(\alpha^l)
	\end{pmatrix}.\]
	It remains to see that $\widehat X$ lies in $\Fac^\E_{l+1}(\omega)$. For any $j$, using the left-inverse $\begin{pmatrix} 0 & \id_{X^{\widehat {j+1}}} \end{pmatrix}$ of $\begin{pmatrix} - \beta_{j+1} \\ \id_{X^{\widehat{j+1}}} \end{pmatrix}$,
	
		\begin{align*}
			& \psi^j = \begin{pmatrix} 0 & \id_{X^{\widehat {j+1}}} \end{pmatrix} \varphi^j \begin{pmatrix} - \beta_j \\ \id_{X^{\widehat j}} \end{pmatrix} = \\
			& \begin{scriptsize}\begin{pmatrix}
				\id_{X^0} & \cdots & 0 & 0 & 0 & \cdots & 0 \\
				\vdots & \ddots & \vdots & \vdots & \vdots & \ddots & \vdots \\
				0 & \cdots & \id_{X^{j-1}} & 0 & 0 & \cdots & 0 \\
				-\alpha^{j-1} \cdots \alpha^0 & \cdots & -\alpha^{j-1} & -\alpha^{j-1} \cdots \alpha^0 \tau^{-1}(\alpha^l \cdots \alpha^{j+1}) & -\alpha^{j-1} \cdots \alpha^0 \tau^{-1}(\alpha^l \cdots \alpha^{j+2}) & \cdots & -\alpha^{j-1} \cdots \alpha^0 \tau^{-1}(\alpha^l) \\
				0 & \cdots & 0 & 0 & \id_{\tau^{-1} X^{j+2}} & \cdots & 0 \\
				\vdots & \ddots & \vdots & \vdots & \vdots & \ddots & \vdots \\
				0 & \cdots & 0 & 0 & 0 & \cdots & \id_{\tau^{-1} X^l}
			\end{pmatrix}\end{scriptsize}.
		\end{align*}
	
	This is isomorphic to the admissible monic $\monic{X^{\widehat j}}{X^{\widehat {j+1}}}$ in $\A$ given by
	\[\left(\begin{array}{ccccccc}
		\id_{X^0} & \cdots & 0 & 0 & 0 & \cdots & 0 \\
		\vdots & \ddots & \vdots & \vdots & \vdots & \ddots & \vdots \\
		0 & \cdots & \id_{X^{j-1}} & 0 & 0 & \cdots & 0 \\
		0 & \cdots & 0 & -\alpha^{j-1} \cdots \alpha^0 \tau^{-1}(\alpha^l \cdots \alpha^{j+1}) & 0 & \cdots & 0 \\
		0 & \cdots & 0 & 0 & \id_{\tau^{-1} X^{j+2}} & \cdots & 0 \\
		\vdots & \ddots & \vdots & \vdots & \vdots & \ddots & \vdots \\
		0 & \cdots & 0 & 0 & 0 & \cdots & \id_{\tau^{-1} X^l}
	\end{array}\right),\]
	see \Cref{prp: Buehler2.9} and \Cref{rmk: alpha-monic}, and $\psi^j$ itself is an admissible monic in $\A$. This means that $\widehat X$ lies in $\Mor^{\textup{m}_\A}_l(\E)$, and thus in $\Fac^\E_{l+1}(\omega)$ due to \Cref{lem: Fact-fully-exact}.\ref{lem: Fact-fully-exact-co-kernels}.
\end{proof}

\begin{prp} \label{prp: Fact-enough}
	Let $(\tau, \omega)$ be a twisted homothety on an exact weakly idempotent complete category $\A$, regular on a fully exact subcategory $\E = \tau \E$ of $\A$, and $l \in \NN$. 
	\begin{enumerate}
		\item \label{prp: Fact-enough-Frob} Suppose that $\E$ has enough projectives. Then $\Fac^\E_{l+1}(\omega)$ has enough projectives. These are the direct summands of direct sums of objects of the form $\nu^k(P) \in \Fac^\E_{l+1}(\omega)$, where $P \in \Proj(\E)$ and $k \in \{0, \dots, l\}$. The same statements hold verbatim for injectives.
		
		\item \label{prp: Fact-enough-subFrob} Suppose that $\E$ is Frobenius. Then $\Fac^\E_{l+1}(\omega)$ is a Frobenius category. If $\E$ has enough $\A$-projectives, then ${}^0{\Fac}^\E_{l+1}(\omega)$ a sub-Frobenius category with the same projective-injectives.
		In particular, there is a fully faithful triangle functor ${}^0{\ul{\Fac}}^\E_{l+1}(\omega) \hookrightarrow \ul\Fac^\E_{l+1}(\omega)$.
	\end{enumerate}
\end{prp}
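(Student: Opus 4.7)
For part (a), my plan is to leverage \Cref{lem: nu-admissible}.\ref{lem: nu-admissible-epic}, which supplies for each $X = (X,\alpha) \in \Fac^\E_{l+1}(\omega)$ an admissible epic from $\nu^l(X^0) \oplus \bigoplus_{k=1}^l \nu^{k-1}(\tau^{-1}X^k)$ onto $X$. Since $\E$ has enough projectives, I choose admissible epics $P_0 \twoheadrightarrow X^0$ and $P_k \twoheadrightarrow \tau^{-1}X^k$ with $P_k \in \Proj(\E)$ for $k = 1, \dots, l$. Applying the exact functors $\nu^l$ and $\nu^{k-1}$ (see \Cref{rmk: nu-Theta}.\ref{rmk: nu-Theta-1}), taking the direct sum via \Cref{prp: Buehler2.9}, and composing with the epic above yields an admissible epic
\[
\nu^l(P_0) \oplus \bigoplus_{k=1}^l \nu^{k-1}(P_k) \twoheadrightarrow X.
\]
By \Cref{lem: nu-adjunction}, each summand is projective in $\Fac^\E_{l+1}(\omega)$, so the direct sum is; hence $\Fac^\E_{l+1}(\omega)$ has enough projectives. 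Each $\nu^k(P)$ with $P \in \Proj(\E)$ moreover lies in ${}^0\Fac^\E_{l+1}(\omega)$: its $l$-th factor is $P$ for $k=l$ and $\tau P$ for $k<l$, and $\tau$ preserves projectives by \Cref{rmk: tau-restricted}. To see that such direct summands of direct sums exhaust the projectives, I apply the construction to an arbitrary $P \in \Proj(\Fac^\E_{l+1}(\omega))$; projectivity makes the epic split, and \Cref{prp: Buehler2.9} exhibits $P$ as the claimed direct summand. The injective case is the exact dual, starting from \Cref{lem: nu-admissible}.\ref{lem: nu-admissible-monic}.

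For part (b), the Frobenius property of $\Fac^\E_{l+1}(\omega)$ follows directly from (a): the projective-injectives on both sides are the direct summands of direct sums of $\nu^k(P)$ with $P$ ranging over $\Proj(\E) = \Inj(\E)$. To show that ${}^0\Fac^\E_{l+1}(\omega)$ is sub-Frobenius, I verify that it has enough $\Fac^\E_{l+1}(\omega)$-projectives and -injectives. Given $X \in {}^0\Fac^\E_{l+1}(\omega)$, consider the admissible epic $p \colon Q \twoheadrightarrow X$ constructed above; as observed, $Q \in {}^0\Fac^\E_{l+1}(\omega)$, and its kernel $K$ lies in $\Fac^\E_{l+1}(\omega)$ by \Cref{lem: Fact-fully-exact}.\ref{lem: Fact-fully-exact-co-kernels}. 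At level $l$, the short exact sequence $K^l \rightarrowtail Q^l \twoheadrightarrow X^l$ in $\E$ splits because $X^l \in \Proj(\E)$, whence $K^l$ is a direct summand of $Q^l \in \Proj(\E)$ and thus lies in $\Proj(\E)$; this places $K$ in ${}^0\Fac^\E_{l+1}(\omega)$, so $p$ is admissible there. The dual argument for injectives uses $X^l \in \Inj(\E)$ to split the level-$l$ monic. The remaining conclusions then follow from \Cref{lem: sub-Frobenius}.

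The main technical step I expect is the splitting argument at level $l$ that ensures the admissible epic and monic from \Cref{lem: nu-admissible} restrict to ${}^0\Fac^\E_{l+1}(\omega)$; this is precisely where the Frobenius hypothesis on $\E$ enters. Everything else reduces to a routine assembly of adjunctions, exactness of $\nu^k$, and closure of projectives and injectives under direct summands, all provided by earlier results.
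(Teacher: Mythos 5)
Your proposal is correct and follows essentially the same route as the paper: part (a) combines enough projectives in $\E$ with \Cref{lem: nu-admissible}.\ref{lem: nu-admissible-epic} and \Cref{lem: nu-adjunction} exactly as the paper does (choosing epics onto $\tau^{-1}X^k$ directly instead of applying $\tau^{-1}$ to epics onto $X^k$ is cosmetic, since $\tau^{-1}$ preserves $\Proj(\E)$), and the description of the projectives via the splitting of an admissible epic onto a projective is the standard argument the paper also uses. For part (b), you have spelled out the level-$l$ splitting that makes the enough-projectives construction land back in ${}^0\Fac^\E_{l+1}(\omega)$ (using $X^l\in\Proj(\E)$ for the epic, $X^l\in\Inj(\E)=\Proj(\E)$ for the monic), which the paper compresses into the phrase \emph{using the termwise exact structure}; your version is a correct unpacking of that remark. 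The only minor nit: the direct-summand-of-projectives step is really a weak-idempotent-completeness fact (retraction has a kernel), not \Cref{prp: Buehler2.9}, but this does not affect the argument.
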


\begin{proof}\
	\begin{enumerate}
		\item Consider $X \in \Fac^\E_{l+1}(\omega)$ and admissible epics $\epic{P^k}{X^k}$ in $\E$ with $P^k \in \Proj(\E)$ for $k \in \{0,\dots,l\}$. By \Cref{rmk: nu-Theta}.\ref{rmk: nu-Theta-1} and \Cref{prp: Buehler2.9}, these form an admissible epic
		\[\Proj \left(\Fac^\E_{l+1}(\omega)\right) \ni P(X) := \epic{\nu^l(P^0) \oplus \bigoplus_{k=1}^l \nu^{k-1}(\tau^{-1}P^k)}{\nu^l(X^0) \oplus \bigoplus_{k=1}^l \nu^{k-1}(\tau^{-1}X^k)}\] in $\Fac^\E_{l+1}(\omega)$, see the particular statement of \Cref{lem: nu-adjunction}. Composing with the admissible epic from \Cref{lem: nu-admissible}.\ref{lem: nu-admissible-epic} yields an admissible epic $p_X \colon \epic{P(X)}{X}$ in $\Fac^\E_{l+1}(\omega)$ and the claims on projectives follow.
		Dually, we can construct an admissible monic
		\[i_X \colon \monic{X}{\bigoplus_{k=0}^l \nu^k(I^k) =: I(X) \in \Inj\left(\Fac^\E_{l+1}(\omega)\right)}\]
		in $\Fac^\E_{l+1}(\omega)$ from admissible monics $\monic{X^k}{I^k}$ in $\E$, where $I^k \in \Inj(\E)$. The claims on injectives then follow in the same way.
		
		\item The first claim follows immediately from \ref{prp: Fact-enough-Frob}. If $\E$ has enough $\A$-projectives, then $\Inj(\E)=\Proj(\E)=\Proj(\A) \cap \E$, see \Cref{lem: sub-Frobenius}.\ref{lem: sub-Frobenius-a}. In the proof of \ref{prp: Fact-enough-Frob}, then $P(X), I(X) \in {}^0{\Fac}^\E_{l+1}(\omega)$. In addition, the kernel $K$ of $p_X$ and the cokernel $C$ of $i_X$ lie in ${}^0{\Fac}^\E_{l+1}(\omega)$: Indeed, for $X \in {}^0{\Fac}^\E_{l+1}(\omega)$, both short exact sequences \[\begin{tikzcd}[cramped, sep=small] K^l \ar[r, tail] & P(X)^l \ar[r, two heads] & X^l \end{tikzcd} \hspace{5mm} \textup{and} \hspace{5mm} \begin{tikzcd}[cramped, sep=small] X^l \ar[r, tail] & I(X)^l \ar[r, two heads] & C^l \end{tikzcd}\]
		in $\E$ split due to $X^l \in \Proj(\A) \cap \E = \Proj(\E) \cap \Inj(\E)$. Hence, $K^l, C^l \in \Proj(\E)$ as direct summands of $P(X)^l, I(X)^l \in \Proj(\E)$, respectively. It follows that $p_X$ and $i_X$ are admissible in ${}^0{\Fac}^\E_{l+1}(\omega)$. Therefore, ${}^0{\Fac}^\E_{l+1}(\omega)$ has enough $\Fac^\E_{l+1}(\omega)$-projectives and -injectives. Since ${}^0{\Fac}^\E_{l+1}(\omega)$ is closed in ${\Fac}^\E_{l+1}(\omega)$ under direct summands, the projective-injectives of $\Fac^\E_{l+1}(\omega)$ lie in ${}^0{\Fac}^\E_{l+1}(\omega)$. Thus, all remaining claims are due to \Cref{lem: sub-Frobenius}. \qedhere
	\end{enumerate}
\end{proof}

\section{Equivalence with comma categories}

In this section, we describe the category of factorizations over an exact category as a comma category and as a diagram category. The established exact equivalences express the fact that any factorization can be reconstructed from its (generalized) cokernel and the contraction of its monics.

\begin{con} \label{con: coker}
	Let $(\tau, \omega)$ be a twisted homothety on an exact category $\A$, $\E = \tau \E$ a fully exact subcategory of $\A$, and $l \in \NN$. On any $X=(X, \alpha) \in \Fac^\E_{l+1}(\omega)$, the counit $j\colon \nu^l \pi^0 \to \id_{\Fac^\E_{l+1}(\omega)}$ of the adjunction from \Cref{lem: nu-adjunction}.\ref{lem: nu-adjunction-left-0} fits into a commutative diagram of bicartesian squares
	\begin{align} \label{diag: coker}
		\begin{tikzcd}[sep={17.5mm,between origins}, ampersand replacement=\&]
			\nu^l(X^{0}) \ar[d, tail, "j_X"] \& X^{0} \ar[r, equal] \ar[d, equal] \& X^{0} \ar[r, equal] \ar[d, tail, "\alpha^0"] \& X^{0} \ar[r, equal] \ar[d, tail, "\alpha^1 \alpha^0"] \& \cdots \ar[r, equal] \& X^{0} \ar[r, equal] \ar[d, tail, "\alpha^{l-2} \cdots \alpha^0"] \& X^{0} \ar[d, tail, "\alpha^{l-1} \cdots \alpha^0"]\\
			X \ar[d, two heads, "q_X"] \& X^{0} \ar[r, tail, "\alpha^{0}"] \ar[d, two heads] \ar[rd, phantom, "\square"] \& X^{1} \ar[r, tail, "\alpha^{1}"] \ar[d, two heads] \ar[rd, phantom, "\square"] \& X^{2} \ar[r, tail, "\alpha^{2}"] \ar[d, two heads] \& \cdots \ar[r, tail, "\alpha^{l-2}"] \ar[rd, phantom, "\square"] \& X^{l-1} \ar[r, tail, "\alpha^{l-1}"] \ar[d, two heads] \ar[rd, phantom, "\square"] \& X^l \ar[d, two heads] \\
			U_X\&0 \ar[r, tail] \& U_X^{1} \ar[r, tail] \& U_X^{2} \ar[r, tail] \ar[ru, phantom, "\square"]  \& \cdots \ar[r, tail] \& U_X^{l-1} \ar[r, tail] \& U_X^l,
		\end{tikzcd}
	\end{align}
	obtained by successive pushouts in $\A$, see \Cref{prp: Buehler2.12}.\ref{prp: Buehler2.12-push}. It represents a short exact sequence in $\mMor_{l}(\A)$. Due to \Cref{prp: Buehler2.12}.\ref{prp: Buehler2.12-pull} and \Cref{rmk: Fac-cok-A/omega}, we have
	\begin{gather} \label{eqn: coker}
		\cok(U_X^{j} \rightarrowtail U_X^{k}) \cong \cok(X^{j} \rightarrowtail X^{k}) \in \A/\omega
	\end{gather}
	for all $j, k \in \{0, \dots, l\}$ with $j < k$. We call the object $\cok_l(X) = \cok(X)$ of $\mMor_{l-1}(\A/\omega)$ defined by
	\begin{gather} \label{eqn: U-iota-coker}
		U_X = \iota\left(\cok(X)\right)
	\end{gather}
	the \textbf{cokernel} of $X$, see \Cref{ntn: mMor}.\ref{ntn: mMor-iota} and \Cref{lem: A/omega-fully-exact}.
\end{con}

\begin{lem} \label{lem: coker-exact}
	Let $(\tau, \omega)$ be a twisted homothety on an exact category $\A$, $\E = \tau \E$ a fully exact subcategory of $\A$, and $l \in \NN$. Then \Cref{con: coker} defines an exact functor
	\[\Fac^\E_{l+1}(\omega) \xrightarrow{\cok_l \, = \, \cok} \mMor_{l-1}(\A/\omega),\]
	which fits into a (component-wise) short exact sequence
	\begin{center}
		\begin{tikzcd}[sep={25mm,between origins}]
			\nu^l \circ \pi^0 \ar[r, tail, "j_l \, = \, j"] & \id \ar[r, two heads, "q_l \, = \, q"] & \iota_{l-1} \circ \cok_l
		\end{tikzcd}
	\end{center}
	of exact endofunctors of $\Fac^\E_{l+1}(\omega)$.
\end{lem}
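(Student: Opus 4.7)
The plan is to establish $\cok_l$ as a well-defined exact functor by direct verification, and then read off the two natural transformations from diagram (4.1). First I would show that $\cok(X) \in \mMor_{l-1}(\A/\omega)$ for $X \in \Fac^\E_{l+1}(\omega)$. The successive pushouts in (4.1) exist in $\A$, and their bicartesianness (\Cref{prp: Buehler2.12}.\ref{prp: Buehler2.12-push}) forces the induced horizontal maps $U_X^{k-1} \rightarrowtail U_X^k$ to be admissible monics. The identifications (4.2), combined with the factorization $\omega_{X^k} = \tau(\alpha^{k-1} \cdots \alpha^0) \alpha^l \cdots \alpha^k$ from \Cref{rmk: circle-zig-zag} and with \Cref{lem: A/omega-coker}, place each $U_X^k$ in $\A/\omega$. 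Since $U_X^0 = 0$, this produces an object of $\mMor_{l-1}(\A/\omega)$. Functoriality follows from the universal property of each pushout: every morphism $f \colon X \to Y$ in $\Fac^\E_{l+1}(\omega)$ induces a unique $\cok(f) \colon \cok(X) \to \cok(Y)$ making the bottom squares of (4.1) commute, and uniqueness delivers compatibility with identities and composition.

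Now the two natural transformations. The counit $j$ of the adjunction $\nu^l \dashv \pi^0$ from \Cref{lem: nu-adjunction}.\ref{lem: nu-adjunction-left-0} is natural by adjunction, and the quotients $q_X$ of (4.1) form a natural transformation $q$ by the same uniqueness argument used above. For each fixed $X$, the left column of (4.1) is componentwise short exact in $\A$: at position $k \geq 1$ by the pushout definition of $U_X^k$, and trivially at position $0$. Hence $\nu^l \pi^0(X) \rightarrowtail X \twoheadrightarrow \iota_{l-1}(\cok(X))$ is a short exact sequence in $\mMor_l(\A)$ under its termwise exact structure, and naturality of $j$ and $q$ promotes this to a componentwise short exact sequence of functors.

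Finally, exactness of $\cok_l$. Given a short exact sequence $X \rightarrowtail Y \twoheadrightarrow Z$ in $\Fac^\E_{l+1}(\omega)$, applying $\pi^0$ and the exact functor $\nu^l$ (\Cref{rmk: nu-Theta}.\ref{rmk: nu-Theta-1}) produces a short exact top row $\nu^l \pi^0(X) \rightarrowtail \nu^l \pi^0(Y) \twoheadrightarrow \nu^l \pi^0(Z)$. Position by position, combining this with the short middle row $X^k \rightarrowtail Y^k \twoheadrightarrow Z^k$ produces a $3 \times 3$ diagram in $\A$ whose top and middle rows and all columns are short exact; the Noether lemma (\Cref{lem: Noether}) supplies a short exact bottom row $U_X^k \rightarrowtail U_Y^k \twoheadrightarrow U_Z^k$, completing the proof. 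The main obstacle is the bookkeeping around the iterated pushouts: one must verify that pasting preserves admissibility and that the cokernel of the composite $X^0 \rightarrowtail X^k$ agrees with $U_X^k$, both routine consequences of \Cref{prp: Buehler2.12} but somewhat tedious to spell out.
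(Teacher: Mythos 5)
Your proof is correct and takes essentially the same approach as the paper: functoriality from the pushout universal property, the short exact sequence of functors from \Cref{con: coker} and the adjunction counit, and exactness of $\cok_l$ by applying the Noether lemma to the $3\times 3$ diagram whose columns are the $j$ and $q$ sequences. The paper phrases the Noether argument once at the level of $\mMor_l(\A)$ and then uses that $\iota$ reflects exactness, whereas you verify it position by position in $\A$ — a purely presentational difference.
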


\begin{proof}
	Functoriality of $\cok$ and compatibility with biproducts are obvious. For exactness, apply the natural transformation $j \colon \nu^l \pi^0 \to \id$ of exact functors, see \Cref{ntn: mMor}.\ref{ntn: mMor-pi} and \Cref{rmk: nu-Theta}.\ref{rmk: nu-Theta-1}, to a short exact sequence $\begin{tikzcd}[cramped, sep=small]
		X \ar[r, tail] & Y \ar[r, two heads] & Z
	\end{tikzcd}$ in $\Fac^\E_{l+1}(\omega)$. Due to \eqref{diag: coker}, \eqref{eqn: U-iota-coker}, and \Cref{lem: Noether}, this yields a commutative diagram 
	\begin{center}
		\begin{tikzcd}[row sep={17.5mm,between origins}, column sep={25mm,between origins}]
			\nu^l(X^0) \ar[d, tail, "j_X"] \ar[r, tail] & \nu^l(Y^0) \ar[d, tail, "j_Y"] \ar[r, two heads] & \nu^l(Z^0) \ar[d, tail, "j_Z"] \\
			X \ar[r, tail] \ar[d, two heads, "q_X"] & Y \ar[r, two heads] \ar[d, two heads, "q_Y"] & Z \ar[d, two heads, "q_Z"] \\
			\iota(\cok(X)) \ar[r, tail, dashed] & \iota(\cok(X)) \ar[r, two heads, dashed] & \iota(\cok(X)),
		\end{tikzcd}
	\end{center}
	in $\mMor_{l}(\A)$ with short exact rows and columns. Since $\iota$ reflects exactness, there is a short exact sequence $\begin{tikzcd}[cramped, sep=small]
		\cok(X) \ar[r, tail] & \cok(Y) \ar[r, two heads] & \cok(Z)
	\end{tikzcd}$ in $\mMor_{l-1}(\A/\omega)$ as desired.
\end{proof}

\begin{rmk}
	Suppose that $\E=\A$ in \Cref{con: coker}.
	\begin{enumerate}
		\item There is a torsion pair $(\nu^l(\A), \iota \mMor_{l-1}(\A/\omega))$ of $\Fac^\A_{l+1}(\omega)$.
		\item There is an adjunction $\cok \dashv \iota$ whose unit is $q$.
	\end{enumerate}
	 
\end{rmk}

\begin{lem} \label{lem: coker-E_omega}
	Let $(\tau, \omega)$ be a twisted homothety on an exact category $\A$ and $l \in \NN$. Consider fully exact subcategories $\E = \tau \E$ and $\E_\omega$ of $\A$ and $\A/\omega$, respectively. Suppose that \Cref{asn}.\ref{asn-coker} holds. Then $\cok$ defines an exact functor $\Fac^\E_{l+1}(\omega) \to \mMor_{l-1}(\E_\omega)$.
\end{lem}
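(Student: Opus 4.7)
The plan is to leverage \Cref{lem: coker-exact}, which already produces an exact functor $\cok\colon \Fac^\E_{l+1}(\omega) \to \mMor_{l-1}(\A/\omega)$. Since $\E_\omega$ is fully exact in $\A/\omega$ and the exact structure on $\mMor_{l-1}$ is termwise, $\mMor_{l-1}(\E_\omega)$ is a fully exact subcategory of $\mMor_{l-1}(\A/\omega)$. It therefore suffices to show that $\cok$ corestricts to $\mMor_{l-1}(\E_\omega)$; exactness of the corestriction is then automatic. The entire verification will reduce to two applications of \Cref{asn}.\ref{asn-coker}: one to land the objects $U_X^k$ in $\E_\omega$, and a second to land the cokernels of their connecting monics in $\E_\omega$.

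For the first step, fix $X = (X, \alpha) \in \Fac^\E_{l+1}(\omega)$ and $k \in \{1, \dots, l\}$. Reading off the $k$-th column of diagram \eqref{diag: coker} would give the short exact sequence
\[\begin{tikzcd}[cramped, ampersand replacement=\&] X^0 \ar[r, tail, "\alpha^{k-1} \cdots \alpha^0"] \& X^k \ar[r, two heads] \& U_X^k \end{tikzcd}\]
in $\A$, in which $X^0, X^k \in \E$ by definition of $\Fac^\E_{l+1}(\omega)$ and $U_X^k \in \A/\omega$ by \eqref{eqn: coker} (taking $j = 0$, so that $U_X^0 = 0$). \Cref{asn}.\ref{asn-coker} then places $U_X^k$ in $\E_\omega$.

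For the second step, fix $k \in \{1, \dots, l-1\}$. The bottom row of \eqref{diag: coker} exhibits $U_X^k \rightarrowtail U_X^{k+1}$ as an admissible monic in $\A$, whose cokernel is isomorphic to $\cok(X^k \rightarrowtail X^{k+1}) \in \A/\omega$ by \eqref{eqn: coker}. Since $X^k, X^{k+1} \in \E$, a second application of \Cref{asn}.\ref{asn-coker} to the short exact sequence $\begin{tikzcd}[cramped, sep=small, ampersand replacement=\&] X^k \ar[r, tail, "\alpha^k"] \& X^{k+1} \ar[r, two heads] \& \cok(\alpha^k) \end{tikzcd}$ places $\cok(\alpha^k)$ in $\E_\omega$. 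Hence the short exact sequence $\begin{tikzcd}[cramped, sep=small, ampersand replacement=\&] U_X^k \ar[r, tail] \& U_X^{k+1} \ar[r, two heads] \& \cok(\alpha^k) \end{tikzcd}$ in $\A$ has all three terms in $\E_\omega$, which exhibits $U_X^k \rightarrowtail U_X^{k+1}$ as an admissible monic in $\E_\omega$. Combined with the previous step, this gives $\cok(X) \in \mMor_{l-1}(\E_\omega)$ and completes the argument.

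I do not foresee any genuine obstacle here: \Cref{asn}.\ref{asn-coker} is tailored to precisely this transfer between $\E$ and $\E_\omega$, and no further ingredients beyond \Cref{lem: coker-exact} and the full exactness of $\E_\omega$ in $\A/\omega$ are required.
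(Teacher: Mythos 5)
Your argument is correct and is essentially the same as the paper's: both land the objects $U_X^k$ in $\E_\omega$ by applying \Cref{asn}.\ref{asn-coker} to the columns of \eqref{diag: coker}, and both land the cokernels of the connecting monics in $\E_\omega$ via \eqref{eqn: coker} and a second application of \Cref{asn}.\ref{asn-coker}, then invoke full exactness of $\E_\omega$ in $\A/\omega$. The only difference is presentational — the paper runs one argument for general $j<k$ and specializes to $j=0$ and $k=j+1$, whereas you treat the two cases separately.
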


\begin{proof}
	We prove that $\cok(X) \in \mMor_{l-1}(\E_\omega)$ in the situation of \Cref{con: coker}. Then the exactness of the functor is due to \Cref{lem: coker-exact}. For any $j, k \in \{0, \dots, l\}$ with $j < k$, \eqref{eqn: coker} yields a short exact sequence
	\begin{center}
		\begin{tikzcd}
			X^{j} \ar[r, tail] & X^{k} \ar[r, two heads] & \cok(U_X^{j} \rightarrowtail U_X^{k}),
		\end{tikzcd} 
	\end{center}
	where $X^j, X^k \in \E$ and $\cok(U_X^{j} \rightarrowtail U_X^{k}) \in \E_\omega$ by \Cref{asn}.\ref{asn-coker}. For $j=0$, this means that $U_X^k \in \E_\omega$, for $k=j+1$, that the monics in $\cok(X)$ are admissible in $\E_\omega$. It follows that $\cok(X) \in \mMor_{l-1}(\E_\omega)$.
\end{proof}

\begin{dfn}[{\cite[\S~II.6]{Mac98}}]
	Given two functors $F \colon \A \to \C$ and $G \colon \B \to \C$, the \textbf{comma category} $(F \downarrow G)$ has as objects triples $(A, \varphi, B)$, where $A \in \A$, $B \in \B$, and $\varphi \colon F A \to G B$ is a morphism in $\C$. Its morphisms $(a, b) \colon (A, \varphi, B) \to (A', \varphi', B')$ are pairs of morphisms $a \colon A \to A'$ in $\A$ and $b \colon B \to B'$ in $\B$ such that
	\begin{center}
		\begin{tikzcd}[sep={17.5mm,between origins}]
			FA \ar[r, "\varphi"] \ar[d, "Fa"] & GB \ar[d, "Gb"] \\
			FA' \ar[r, "\varphi'"] & GB'
		\end{tikzcd}
	\end{center}
	commutes. It is well-known that the comma category is a pullback in the category of (additive) categories:
	\begin{gather} \label{lem: L-diagram-PB}
		\begin{tikzcd}[row sep={17.5mm,between origins}, column sep={25mm,between origins}, ampersand replacement=\&]
			(F \downarrow G) \ar[r] \ar[d] \& \Mor_1(\C) \ar[d]\\
			\A \times \B \ar[r, "F \times G"] \& \C \times \C
		\end{tikzcd}
		\begin{tikzcd}[row sep={17.5mm,between origins}, column sep={35mm,between origins}, ampersand replacement=\&]
			(A, \varphi, B) \ar[r, |->] \ar[d, |->] \& (\varphi \colon FA \to GB) \ar[d, |->] \\
			(A, B) \ar[r, |->] \& (FA, GB)
		\end{tikzcd}
	\end{gather}
\end{dfn}

\begin{ntn}
	Let $(\tau, \omega)$ be a twisted homothety on an exact category $\A$, $\E = \tau \E$ and $\E_\omega$ fully exact subcategories of $\A$ and $\A/\omega$, respectively, and $l \in \NN$.
	\begin{enumerate}
		\item Consider the two functors $\cok_1 \colon \Fac^\E_2(\omega) \to \A/\omega$ and $\pi^{l-1} \colon \mMor_{l-1}(\E_\omega) \to \E_\omega$. Let $\C^\E_{l+1}(\omega)$ denote the subcategory of the comma category $(\cok_1 \downarrow \pi^{l-1})$ consisting of triples $(\tilde X, \varphi, U)$ where $\varphi \colon \cok_1(\tilde X) \cong \pi^{l-1}(U)$ is an isomorphism in $\A$. In this context, we use an indexing where
		\[\tilde X \colon \monic{X^0}{X^l} \hspace{5mm} \textup{ and } \hspace{5mm} \begin{tikzcd}
			U \colon U^1 \ar[r, tail] & \cdots \ar[r, tail] & U^l,
		\end{tikzcd}\] 
		for compatibility with the notation of \Cref{con: coker}. By ${}^0{\C}^\E_{l+1}(\omega)$, we denote the subcategory of $\C^\E_{l+1}(\omega)$ defined by the additional condition $X^l \in \Proj(\A)$.

		\item Let $\L^\E_{l+1}(\omega)$ be the category of diagrams
		\begin{equation} \label{lem: L-diagram}
			\begin{tikzcd}[sep={17.5mm,between origins}, ampersand replacement=\&]
				\& \& \& \& X^0 \ar[d, tail, "\iota"] \\
				\& \& \& \& X^l \ar[d, two heads, "\rho"] \\
				U^{1} \ar[r, tail] \& U^{2} \ar[r, tail] \& \cdots \ar[r, tail] \& U^{l-1} \ar[r, tail] \& U^l
			\end{tikzcd}
		\end{equation}
		in $\A$, where $U \in \mMor_{l-1}(\E_\omega)$, $X^0, X^l\in \E$, and $(\iota, \rho)$ is a short exact sequence. By ${}^0{\L}^\E_{l+1}(\omega)$ we denote its subcategory defined by the additional condition $X^l \in \Proj(\A)$. A diagram like \eqref{lem: L-diagram} is denoted by $((\iota, \rho), U)$. We write a morphism $((\iota, \rho), U) \to ((\kappa, \sigma), V)$ of such diagrams as $(\tilde f,g)$, where $\tilde f=(f^0, f^l) \colon \iota \to \kappa$ and $g=(g^1, \dots, g^l) \colon U \to V$.
	\end{enumerate}
\end{ntn}

\begin{lem}
	Let $(\tau, \omega)$ be a twisted homothety on an exact category $\A$, $\E = \tau \E$ and $\E_\omega$ fully exact subcategories of $\A$ and $\A/\omega$, respectively, and $l \in \NN$. Then ${}^0{\L}^\E_{l+1}(\omega)$ is extension-closed, and hence fully exact in $\L^\E_{l+1}(\omega)$. The category $\L^\E_{l+1}(\omega)$, in turn, is a fully exact subcategory of the category of diagrams in $\A$ of the following type:
	\begin{equation*}
		\begin{tikzcd}[sep={17.5mm,between origins}, ampersand replacement=\&]
			\& \& \& \& \bullet \ar[d] \\
			\& \& \& \& \bullet \ar[d] \\
			\underset 1 \bullet \ar[r] \& \underset 2 \bullet \ar[r] \& \cdots \ar[r] \& \underset {l-1} \bullet \ar[r] \& \underset l \bullet
		\end{tikzcd}
	\end{equation*}
\end{lem}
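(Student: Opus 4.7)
The plan is to invoke \Cref{lem: fullyexact}(a) by showing that $\L^\E_{l+1}(\omega)$ is closed under extensions in the ambient diagram category $\D$ of diagrams of the specified shape in $\A$. The category $\D$ carries the termwise exact structure, so a sequence of diagrams is short exact if and only if it yields a short exact sequence in $\A$ at each vertex of the shape quiver.

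Given a short exact sequence $D' \rightarrowtail D \twoheadrightarrow D''$ in $\D$ with $D', D'' \in \L^\E_{l+1}(\omega)$, I would verify each defining condition of $\L^\E_{l+1}(\omega)$ for the middle term $D$. The memberships $X^0_D, X^l_D \in \E$ and $U^j_D \in \E_\omega$ follow vertex-wise from the extension-closedness of these fully exact subcategories in $\A$, respectively in $\A/\omega$ (with \Cref{lem: A/omega-fully-exact} used to situate the latter within $\A$). The structural constraint that the middle column $(\iota_D, \rho_D)$ is short exact is the central step: it follows from the Noether lemma (\Cref{lem: Noether}) applied to the $3 \times 3$ diagram whose rows are the termwise short exact sequences at the vertices $X^0$, $X^l$, and $U^l$, and whose outer columns are the given short exact sequences $(\iota_{D'}, \rho_{D'})$ and $(\iota_{D''}, \rho_{D''})$. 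Finally, the requirement that the bottom chain $U_D$ still consists of admissible monics in $\A$ with objects in $\E_\omega$ at each position (i.e.\ $U_D \in \mMor_{l-1}(\E_\omega)$) is termwise analogous to the fully exactness of $\mMor_{l-1}(\E_\omega)$ inside $\Mor_{l-1}(\E_\omega)$ recorded in \Cref{thm: mMor}(a).

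With extension-closedness established, \Cref{lem: fullyexact}(a) yields that $\L^\E_{l+1}(\omega)$ is a fully exact subcategory of $\D$, with the exact structure induced by the termwise one. The main obstacle is the vertex-wise closure step for the subcategories $\E$ and $\E_\omega$: strictly speaking, fully exactness of an inclusion does not formally entail extension-closedness, so this part of the argument relies on treating $\E$ and $\E_\omega$ as extension-closed subcategories in the sense of \Cref{lem: fullyexact}(a). This is the typical situation in the applications of interest to the paper, such as Gorenstein projective modules over Gorenstein rings.
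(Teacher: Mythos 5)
Your route of proving extension-closedness of $\L^\E_{l+1}(\omega)$ in the ambient diagram category $\D$ and then invoking \Cref{lem: fullyexact}.\ref{lem: fullyexact-extclosed} is genuinely different from the paper's argument, and it fails at a more fundamental point than the one you flag. The shape quiver of the statement carries no relation on the composite $\rho \circ \iota$, so the composition law $\rho_D \iota_D = 0$ (a necessary condition for $(\iota_D,\rho_D)$ to be short exact, hence a defining constraint of $\L^\E_{l+1}(\omega)$) is simply not preserved under extensions in $\D$. For instance, with $l=1$ in a category of vector spaces, take $D' = D''$ equal to the split sequence $k \xrightarrow{(1\;0)^{\mathsf T}} k^2 \xrightarrow{(0\;1)} k$ and let $D$ have terms $k^2, k^4, k^2$, with $\iota_D(x,y) = (x,y,y,0)$ and $\rho_D(a,b,c,d) = (b,d)$ and the evident split monics and epics as connecting maps. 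This is a termwise short exact sequence $D' \rightarrowtail D \twoheadrightarrow D''$ in $\D$ with $D', D'' \in \L^\E_2(\omega)$, yet $\rho_D\iota_D(x,y) = (y,0) \neq 0$, so $D \notin \L^\E_2(\omega)$. Consequently the subcategory is not extension-closed, and the $3\times 3$-type argument cannot be salvaged: one can deduce that the middle column is short exact from exact rows and exact outer columns only when the middle column is already known to be a complex, which is precisely what fails. (As a side remark, \Cref{lem: Noether} as stated completes the right column from the left and middle, not the middle from the two outer ones; the latter needs the form of the $3\times 3$ lemma just described.) The issue you do raise — that full exactness of $\E$ and $\E_\omega$ does not entail their extension-closedness — is real but secondary.

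The paper sidesteps all of this by never aiming at extension-closedness; it establishes only the weaker property of being fully exact. It decomposes $\L^\E_{l+1}(\omega)$ into two pieces sharing the vertex $U^l$: the bottom chain, which is fully exact in $\Mor_{l-1}(\A)$ by \Cref{thm: mMor}.\ref{thm: mMor-exact} together with full exactness of $\E_\omega$ in $\A$; and the column, which is fully exact in $\Mor_2(\A)$ by Bühler's Exercise 3.9 on the category of short exact sequences, refined by the membership constraints $X^0, X^l \in \E$, $U^l \in \E_\omega$. The termwise exact structure then glues these into the claim, without ever asserting closure under extensions in $\D$. If you want to redo this along your lines, you would first have to pass to the smaller ambient category of diagrams with $\rho\iota = 0$, verify that it is exact, and then argue extension-closedness there — at which point the paper's decomposition is simpler.
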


\begin{proof}
	The category $\mMor_{l-1}(\E_\omega)$ is fully exact in $\Mor_{l-1}(\E_\omega)$ by \Cref{thm: mMor}.\ref{thm: mMor-exact} and hence in $\Mor_{l-1}(\A)$, since $\E_\omega$ is fully exact in $\A$. Due to {\cite[Ex.~3.9]{Buh10}}, the category of short exact sequences $\begin{tikzcd}[cramped, sep=small] X \ar[r, tail] & Y \ar[r, two heads] & Z \end{tikzcd}$ with $X, Y \in \E$ and $Z \in \E_\omega$ is fully exact in $\Mor_2(\A)$, since $\E$ and $\E_\omega$ are so in $\A$. Due to the termwise exact structure of diagram categories, this yields the claim on $\L^\E_{l+1}(\omega)$. The claim on ${}^0{\L}^\E_{l+1}(\omega)$ follows from \Cref{lem: fullyexact}.\ref{lem: fullyexact-extclosed} since $\Proj(\A)\cap \E$ is extension-closed in~$\E$.
\end{proof}

\begin{rmk} \label{rmk: coker-gamma}
	Let $(\tau, \omega)$ be a twisted homothety on an exact category $\A$, $\E = \tau \E$ a fully exact subcategory of $\A$, and $l \in \NN$. There is an isomorphism $\varphi \colon \cok_1 \circ \gamma \cong \pi^{l-1} \circ \cok_l$ of functors $\Fac^\E_{l+1}(\omega) \to \A/\omega$ which fits into the commutative diagram
	\begin{center}
		\begin{tikzcd}[row sep={15mm,between origins}, column sep={27.5mm,between origins}]
		& \pi^1 \circ \gamma \ar[r, equal] \ar[ld, "\pi^1 \circ q_1 \circ \gamma"'] & \pi^l \ar[rd, "\pi^l \circ q_l"] & \\
		\pi^1 \circ \iota_0 \circ \cok_1 \circ \gamma \ar[r, equal] & \cok_1 \circ \gamma \ar[r, dashed, "\varphi", "\cong"'] & \pi^{l-1} \circ \cok_l \ar[r, equal] & \pi^l \circ \iota_{l-1} \circ \cok_l.
		\end{tikzcd}
	\end{center}
\end{rmk}

\begin{lem} \label{lem: comma-embedding}
	Let $(\tau, \omega)$ be a twisted homothety on an exact category $\A$, $\E = \tau \E$ and $\E_\omega$ fully exact subcategories of $\A$ and $\A/\omega$, respectively, and $l \in \NN$. Suppose that \Cref{asn}.\ref{asn-coker} holds. There are the following faithful functors:
	\begin{enumerate}
		\item $\Fac^\E_{l+1}(\omega) \to \C^\E_{l+1}(\omega)$, which sends $X $ to $(\gamma X, \varphi_X, \cok_l(X))$ for any isomorphism $\varphi \colon \cok_1 \circ \gamma \cong \pi^{l-1} \circ \cok_l$ of functors $\Fac^\E_{l+1}(\omega) \to \E_\omega$ as in \Cref{rmk: coker-gamma}, see \Cref{lem: coker-E_omega},
		\item \label{prp: comma-equiv-diag} $\C^\E_{l+1}(\omega) \to \L^\E_{l+1}(\omega)$, which sends $(\tilde X, \varphi, U)$ to $((j_{\tilde X}^1, \varphi q_{\tilde X}^1),U )$, see \eqref{diag: coker}.
	\end{enumerate}
	Their composition $\Fac^\E_{l+1}(\omega) \to \L^\E_{l+1}(\omega)$ sends $X$ to $((j_X^l, q_X^l), \cok_l(X))$, and a morphism $f \colon X \to Y$ to $(\gamma f, \cok_l(f))$. All these statements persist when replacing $\Fac^\E_{l+1}(\omega)$, $\C^\E_{l+1}(\omega)$, and $\L^\E_{l+1}(\omega)$ by their subcategories ${}^0{\Fac}^\E_{l+1}(\omega)$, ${}^0{\C}^\E_{l+1}(\omega)$, and ${}^0{\L}^\E_{l+1}(\omega)$, respectively.
\end{lem}

\begin{proof} \
	\begin{enumerate}
		\item Using the pullback \eqref{lem: L-diagram-PB}, the functors \[\varphi \colon \Fac^\E_{l+1}(\omega) \to \Mor_1(\E_\omega), \; X \mapsto (\varphi_X \colon \cok_1(\gamma X) \to \pi^{l-1}(\cok_l(X))),\] and \[(\gamma, \cok_l) \colon \Fac^\E_{l+1}(\omega) \to \Fac^\E_2(\omega) \times \mMor_{l-1}(\E_\omega), \; X \mapsto (\gamma X, \cok_l(X)),\] induce the desired functor as follows:
		\begin{center}
			\begin{tikzcd}[row sep={17.5mm,between origins}, column sep={35mm,between origins}]
				\Fac^\E_{l+1}(\omega) \ar[rd, dashed] \ar[rrd, "\varphi"] \ar[rdd, "(\gamma{,} \, \cok_l)"'] \\
				& (\cok_1 \downarrow \pi^{l-1}) \ar[r] \ar[d] & \Mor_1(\E_\omega) \ar[d]\\
				& \Fac^\E_2(\omega) \times \mMor_{l-1}(\E_\omega) \ar[r, "\cok_1 \times \pi^{l-1}"] & \E_\omega \times \E_\omega
			\end{tikzcd}
		\end{center}
		Since $\varphi_X$ is an isomorphism for each $X \in \Fac^\E_{l+1}(\omega)$, its image lies in the subcategory $\C^\E_{l+1}(\omega)$ of $(\cok_1 \downarrow \pi^{l-1})$. It is faithful, since $\gamma$ is so and a morphism $f \colon X \to Y$ in $\Fac^\E_{l+1}(\omega)$ is assigned to $(\gamma f, \cok_l(f))$.
		
		\item By definition, any morphism $(\tilde f, g) \colon (\tilde X, \varphi, U) \to (\tilde Y, \psi, V)$ in $\C^\E_{l+1}(\omega)$ yields a commutative diagram
		\begin{equation*}
			\begin{tikzcd}[sep={10mm,between origins}, ampersand replacement=\&]
				\& \&\& \&\& \&\& \&\& \&\& Y^0 \ar[dd, tail] \\
				\& \&\& \&\& \&\& \&\& X^0 \ar[dd, tail] \ar[rru, "f^0"] \\
				\& \&\& \&\& \&\& \&\& \&\& Y^l \ar[dd, two heads, "q^1_{\tilde Y}"] \\
				\& \&\& \&\& \&\& \&\& X^l \ar[dd, two heads, "q^1_{\tilde X}"'] \ar[rru, "f^l"] \\
				\& \&\& \&\& \&\& \&\& \&\& \cok_1(\tilde Y) \ar[dd, "\psi", "\cong"'] \\
				\& \&\& \&\& \&\& \&\& \cok_1(\tilde X) \ar[rru, "\cok_1(\tilde f)", pos=1] \\
				\&\& V^{1} \ar[rr, tail] \&\& V^{2} \ar[rr, tail] \&\& \cdots \ar[rr, tail] \&\& V^{l-1} \ar[rrr, tail] \&\&\& V^l\\
				U^{1} \ar[rr, tail] \ar[rru, "g^1", pos=0.3] \&\& U^{2} \ar[rr, tail] \ar[rru, "g^2", pos=0.3] \&\& \cdots \ar[rr, tail] \&\& U^{l-1} \ar[rrr, tail] \ar[rru, "g^{l-1}", pos=0.2] \&\&\& U^l \ar[rru, "g^l", pos=0.3]
				\arrow[from=6-10, to=8-10, crossing over, "\varphi"', "\cong", pos=0.2] 
			\end{tikzcd}
		\end{equation*}
		in $\A$. Hence, $(\tilde f,g)$ is a morphism $((j_{\tilde X}^1, \varphi q_{\tilde X}^1), U) \to ((j_{\tilde Y}^1, \psi q_{\tilde Y}^1), V)$ in $\L^\E_{l+1}(\omega)$. The identical assignment $(\tilde f,g) \mapsto (\tilde f,g)$ is trivially injective.
	\end{enumerate}
	The composition $\Fac^\E_{l+1}(\omega) \to \L^\E_{l+1}(\omega)$ of these two functors sends an object $X \in \Fac^\E_{l+1}(\omega)$ to \[((j_{\gamma X}^1, \varphi_X q_{\gamma X}^1), \cok_l(X)) = ((j_X^l, q_X^l), \cok_l(X)) \in \L^\E_{l+1}(\omega),\] see \Cref{rmk: coker-gamma}. \\
	All of the previous arguments are unaffected by imposing the condition $X^l \in \Proj(\A)$ on all occurring objects. The claims on the restricted functors follow.
\end{proof}

\begin{ntn} \label{ntn: L}
	Let $(\tau, \omega)$ be a twisted homothety on an exact category $\A$ and $l \in \NN$. Consider fully exact subcategories $\E = \tau \E$ and $\E_\omega$ of $\A$ and $\A/\omega$, respectively. Suppose that \Cref{asn}.\ref{asn-coker} holds. Let $L$ denote the exact functor $\L^\E_{l+1}(\omega) \to \mMor_{l-1}(\E_\omega)$, which sends an object $((\iota,\rho), U)$ to $U$ and a morphism $(\tilde f,g) \colon ((\iota,\rho), U) \to ((\kappa,\sigma), V)$ to $g \colon U \to V$. With the composite functors from \Cref{lem: comma-embedding}, it fits into a commutative diagram
	\begin{equation*}
		\begin{tikzcd}[sep={20mm,between origins}]
			{}^0{\L}_{l+1}^\E(\omega) \ar[rr, hook] \ar[rd, "L"] && \L_{l+1}^\E(\omega) \ar[ld, "L"'] \\
			& \mMor_{l-1}(\E_\omega)\\
			{}^0{\Fac}^\E_{l+1}(\omega) \ar[rr, hook] \ar[uu] \ar[ru, "\cok"] && \Fac^\E_{l+1}(\omega), \ar[uu] \ar[lu, "\cok"']
		\end{tikzcd}
	\end{equation*}
	where restrictions are denoted by the same symbol.
\end{ntn}

\begin{prp} \label{prp: comma-equiv}
	Let $(\tau, \omega)$ be a twisted homothety on an exact category $\A$, $\E = \tau \E$ and $\E_\omega$ fully exact subcategories of $\A$ and $\A/\omega$, respectively, and $l \in \NN$. Suppose that \Cref{asn}.\ref{asn-coker} holds.
	\begin{enumerate}
		\item \label{prp: comma-equiv-a} The functors $\inj{\Fac^\E_{l+1}(\omega)} {\C^\E_{l+1}(\omega)}$ and $\inj{\C^\E_{l+1}(\omega)}{\L^\E_{l+1}(\omega)}$ from \Cref{lem: comma-embedding} are fully faithful.
		
		\item \label{prp: comma-equiv-b} Their composition $\inj{\Fac^\E_{l+1}(\omega)}{\L^\E_{l+1}(\omega)}$ is fully exact and extension-injective. Under \Cref{asn}.\ref{asn-ker}, it is even essentially surjective, and hence an equivalence of categories.
	\end{enumerate}
	All these statements persist when replacing $\Fac^\E_{l+1}(\omega)$, $\C^\E_{l+1}(\omega)$, and $\L^\E_{l+1}(\omega)$ by their subcategories ${}^0{\Fac}^\E_{l+1}(\omega)$, ${}^0{\C}^\E_{l+1}(\omega)$, and ${}^0{\L}^\E_{l+1}(\omega)$, respectively.
\end{prp}

\begin{cor} \label{cor: cok-L}
	Let $(\tau, \omega)$ be a twisted homothety on an exact category $\A$, $\E = \tau \E$ and $\E_\omega$ fully exact subcategories of $\A$ and $\A/\omega$, respectively, and $l \in \NN$. Suppose that \Cref{asn}.\ref{asn-coker} holds.
	\begin{enumerate}
		\item The functor $\cok$ is the restriction of the functor $L$ from \Cref{ntn: L} along a fully faithful, fully exact, extension-injective functor.
		
		\item Under \Cref{asn}.\ref{asn-ker}, the functors $\cok$ and $L$ agree up to an equivalence of exact categories between their domains. \qed
	\end{enumerate}
\end{cor}

\begin{proof}[Proof of \Cref{prp: comma-equiv}] \
	Due to \Cref{lem: ext-bij,lem: comma-embedding}, it suffices to show that the composite in \ref{prp: comma-equiv-b} is full, fully exact, extension-injective, and, under \Cref{asn}.\ref{asn-ker}, essentially surjective.\\
	To prove \emph{fullness}, consider a morphism $(\tilde f, g) \colon ((j_X^l, q_X^l), \cok_l(X)) \to ((j_Y^l, q_Y^l), \cok_l(Y))$ in $\L^\E_{l+1}(\omega)$ where $X, Y \in \Fac^\E_{l+1}(\omega)$ and $\tilde f = (f^0, f^l)$. Set $U_X := \iota(\cok_l(X))$ and $U_Y := \iota(\cok_l(Y))$. Since $q^l_Y f^l = g^l q_X^l$, the functoriality of pullbacks yields the desired preimage $f \colon X \to Y$:
	\begin{center}
		\begin{tikzcd}[column sep={10mm,between origins}, row sep={12.5mm,between origins}]
			X \ar[rd, dashed, "f"] \ar[dd, two heads, "q_X"] && X^0 \ar[rr, tail] \ar[rd, dashed, "f^0"] \ar[dd, two heads, "q^0_X"' near start] && X^1 \ar[rr, tail] \ar[rd, dashed, "f^1"] \ar[dd, two heads, "q^1_X"' near start] && X^2 \ar[rr, tail] \ar[rd, dashed, "f^2"] \ar[dd, two heads, "q^2_X"' near start] && \cdots \ar[rr, tail] && X^{l-1} \ar[rr, tail] \ar[rd, dashed, "f^{l-1}"] \ar[dd, two heads, "q^{l-1}_X"' near start] && X^l \ar[rd, "f^l"] \ar[dd, two heads, "q^l_X"' near start] & \\
			& Y \ar[dd, two heads, "q_Y"] && Y^0 \ar[rr, tail, crossing over] && Y^1 \ar[rr, tail, crossing over] && Y^2 \ar[rr, tail, crossing over] && \cdots \ar[rr, tail, crossing over] && Y^{l-1} \ar[rr, tail, crossing over] && Y^l \\
			U_X \ar[rd, "g"' near start] && 0 \ar[rr, tail] \ar[rd] && U_X^1 \ar[rr, tail] \ar[rd, "g^1"' near start] && U_X^2 \ar[rr, tail] \ar[rd, "g^2"' near start] && \cdots \ar[rr, tail] && U_X^{l-1} \ar[rr, tail] \ar[rd, "g^{l-1}"' near start] && U_X^l \ar[rd, "g^l"' near start] \\
			& U_Y && 0 \ar[rr, tail] \ar[uu, <<-, crossing over, "q_Y^0"' near end] && U_Y^1 \ar[rr, tail] \ar[uu, <<-, crossing over, "q_Y^1"' near end] && U_Y^2 \ar[rr, tail] \ar[uu, <<-, crossing over, "q_Y^2"' near end] && \cdots \ar[rr, tail] && U_Y^{l-1} \ar[rr, tail] \ar[uu, <<-, crossing over, "q_Y^{l-1}"' near end] && U_Y^l \ar[uu, <<-, crossing over, "q_Y^l"' near end] 
		\end{tikzcd}
	\end{center}
	By uniqueness, the induced morphism $X^0 \to Y^0$ agrees with $f^0$.\\
	The composed functor is \emph{exact}, since $\gamma$ and $\cok_l$ are so, see \Cref{lem: coker-exact}. It also also \emph{reflects exactness}: Given a sequence $(X, \alpha) \xrightarrow{i} (Y, \beta) \xrightarrow{p} (Z, \gamma)$ in $\Fac^\E_{l+1}(\omega)$, suppose that
	\begin{equation} \label{eqn: ses}
		\begin{tikzcd}[sep=large]
			((j_X^l,q_X^l), \cok_l(X)) \ar[r, tail, "{((i^0, i^l), \overline i)}"] & ((j_Y^l,q_Y^l), \cok_l(Y)) \ar[r, two heads, "{((p^0, p^l), \overline p)}"] & ((j_Z^l,q_Z^l), \cok_l(Z)),
		\end{tikzcd}
	\end{equation}
	where $\overline i = \cok_l(i)$ and $\overline p = \cok_l(p)$, is a short exact sequence in $\L^\E_{l+1}(\omega)$. Set $(U_X, \overline \alpha) := \iota(\cok_l(X))$, $(U_Y, \overline \beta) := \iota(\cok_l(Y))$, and $(U_Z, \overline \gamma) := \iota(\cok_l(Z))$. 
	Applying \Cref{prp: Buehler2.12} to \eqref{diag: coker} for $X$, $Y$, and $Z$, yields the commutative diagram
	\begin{center}
		\begin{tikzcd}[row sep={15mm,between origins}, column sep={27.5mm,between origins}]
			X^k \ar[r] \ar[d, tail] & Y^k \ar[r] \ar[d, tail] & Z^k \ar[d, tail] \\
			U_X^k \oplus X^{k+1} \ar[r, tail] \ar[d, two heads] & U_Y^k \oplus Y^{k+1} \ar[r, two heads] \ar[d, two heads] & U_Z^k \oplus Z^{k+1} \ar[d, two heads] \\
			U_X^{k+1} \ar[r, tail] & U_Y^{k+1} \ar[r, two heads] & U_Z^{k+1},
		\end{tikzcd}
	\end{center}

	with short exact columns. For $k=l-1$, the middle and the lower row are also short exact. Hence, due to \Cref{lem: Noether}, applied iteratively for decreasing $k \in \{0, \dots, l-1\}$, the upper row is also short exact. For \emph{extension-injectivity}, suppose that \eqref{eqn: ses} splits, and consider a reverse $\left(((j^0, j^l), \overline j), ((q^0, q^l), \overline q)\right)$. Then, for decreasing $k \in \{0, \dots, l-1\}$, the functoriality of pullbacks yields the dashed morphisms in the commutative diagram
	\begin{equation*}
		\begin{tikzcd}[sep={12mm,between origins}]
			&& Z^{k} \ar[rrr, tail, "\gamma^{k}"] \ar[ddd, two heads, "q_Z^{k}"] \ar[ld, dashed, "j^{k}"'] &&& Z^{k+1} \ar[ddd, two heads, "q_Z^{k+1}"] \ar[ld, tail, "j^{k+1}"' near end] \\
			& Y^{k} \ar[ld, dashed, "q^{k}"'] \ar[rrr, crossing over,tail,"\beta^{k}"] \ar[ddd, two heads,"q_Y^{k}"] &&& Y^{k+1} \ar[ld, two heads, "q^{k+1}"' near end] \\
			X^{k}  \ar[rrr, crossing over, tail, "\alpha^{k}"] \ar[ddd, two heads, "q_X^{k}"] &&& X^{k+1}   \\
			&& U_Z^{k} \ar[rrr, tail, "\overline \gamma^{k}"] \ar[ld, tail, "\overline j^{k}"'] &&& U_Z^{k+1} \ar[ld, tail, "\overline j^{k+1}"] \\
			& U_Y^{k} \ar[ld, two heads, "\overline q^{k}"'] \ar[rrr, tail, "\overline \beta^{k}"] &&& U_Y^{k+1} \ar[ld, two heads, "\overline q^{k+1}"] \ar[uuu, <<-, crossing over, "q_Y^{k+1}"'] \\
			U_X^{k}  \ar[rrr, tail, "\overline \alpha^{k}"] &&& U_X^{k+1}.  \ar[uuu, <<-, crossing over, "q_X^{k+1}"']
		\end{tikzcd}
	\end{equation*}
	By uniqueness, the induced morphisms $Z^k \to Y^k$ and $Y^k \to X^k$ agree with $j^0$ and $q^0$, respectively. By \Cref{lem: restrict-splitting}, $(j^k, q^k)$ is a reverse of $(i^k,p^k)$, for all $k \in \{0,\dots,l\}$, and hence $(j,p)$ a reverse  of $(i,p)$.\\
	Suppose that \Cref{asn}.\ref{asn-ker} holds.
	For \emph{essential surjectivity}, consider an arbitrary object $((\kappa, \sigma), V) \in \L^\E_{l+1}(\omega)$. By successive pullbacks from left to right, we obtain the front layer of the following commutative diagram, consisting of the given and dashed arrows and whose columns are short exact, see \Cref{prp: Buehler2.12}.\ref{prp: Buehler2.12-pull}:
	\begin{align} 
		\begin{tikzcd}[sep={11.5mm,between origins}, ampersand replacement=\&]
			\& \nu^l(X^0) \ar[ld, <-, dotted, "\nu^l(f^0)"'] \ar[dd, tail, "j_X"] \&\& X^0 \ar[ld, <-, dotted, "f^0"' near end] \ar[rr, equal] \ar[dd, equal] \&\& X^0 \ar[ld,<-, dotted, "f^0"' near end] \ar[rr, equal] \ar[dd, tail] \&\& X^0 \ar[ld, <-, dotted, "f^0"' near end] \ar[rr, equal] \ar[dd, tail] \&\& \cdots \ar[rr, equal] \&\& X^0 \ar[ld, <-, dotted, "f^0"' near end] \ar[rr, equal] \ar[dd, tail] \&\& X^0 \ar[ld, <-, dotted, "f^0"' near end] \ar[dd, tail, "j_X^l"] \ar[dd, tail] \\
			\nu^l(Y^0) \ar[dd, tail, dashed, "\tilde j"'] \&\& Y^0 \ar[rr, equal, crossing over] \&\& Y^0 \ar[rr, equal, crossing over] \&\& Y^0 \ar[rr, equal, crossing over] \&\& \cdots \ar[rr, equal, crossing over] \&\& Y^0 \ar[rr, equal, crossing over] \&\& Y^0 \\
			\& X \ar[ld, equal] \ar[dd, two heads, "q_X"] \&\& X^{0} \ar[ld, equal] \ar[rr, tail, "\alpha^{0}" near end] \ar[dd, two heads] \&\& X^{1} \ar[ld, equal] \ar[rr, tail, "\alpha^{1}" near end] \ar[dd, two heads] \&\& X^{2} \ar[ld, equal] \ar[rr, tail, "\alpha^{2}"] \ar[dd, two heads] \&\& \cdots \ar[rr, tail, "\alpha^{l-2}" near start] \&\& X^{l-1} \ar[ld, equal] \ar[rr, tail, "\alpha^{l-1}" near end] \ar[dd, two heads] \&\& \ar[ld, equal] X^l \ar[dd, two heads, "q_X^l"]\\
			X \ar[dd, two heads, dashed, "\tilde q"'] \&\& X^{0} \ar[rr, tail, dashed, crossing over, "\alpha^{0}" near end] \ar[uu, <-<, crossing over, dashed] \&\& X^{1} \ar[rr, tail, dashed, crossing over,"\alpha^{1}" near end] \ar[uu, <-<, crossing over, dashed] \&\& X^{2} \ar[rr, tail, dashed, crossing over,"\alpha^{2}" near end] \ar[uu, <-<, crossing over, dashed] \&\& \cdots \ar[rr, tail, dashed, crossing over,"\alpha^{l-2}"] \&\& X^{l-1} \ar[rr, tail, dashed, crossing over,"\alpha^{l-1}" near end] \ar[uu, <-<, crossing over, dashed] \&\& Y^l \ar[uu, <-<, crossing over, "\kappa"' near end] \\
			\& \iota \cok(X) \&\& 0 \ar[rr, tail] \&\& U_X^{1} \ar[rr, tail] \&\& U_X^{2} \ar[rr, tail] \&\& \cdots \ar[rr, tail] \&\& U_X^{l-1} \ar[rr, tail] \&\& U_X^l\\
			\iota V \ar[ru, dotted, "\iota g"'] \&\& 0 \ar[uu, <<-, crossing over, dashed] \ar[rr, tail] \ar[ru, dotted] \&\& V^{1} \ar[uu, <<-, crossing over, dashed] \ar[rr, tail] \ar[ru, dotted, "g^1"'] \&\& V^{2} \ar[uu, <<-, crossing over, dashed] \ar[rr, tail] \ar[ru, dotted, "g^2"'] \&\& \cdots \ar[rr, tail] \&\& V^{l-1} \ar[uu, <<-, crossing over, dashed] \ar[rr, tail] \ar[ru, dotted, "g^{l-1}"'] \&\& V^l \ar[uu, <<-, crossing over, "\sigma"' near end] \ar[ru, dotted, "g^l"'] 
		\end{tikzcd}
	\end{align}
	The back layer of the diagram is \eqref{diag: coker} for $X = (X, \alpha) \in \mMor_{l}(\A)$, where $X^l := Y^l$. Since $X^l \in \E$ and $\cok(\monic{X^k}{X^l}) \cong \cok(\monic{V^k}{V^l}) \in \E_\omega$, see \Cref{prp: Buehler2.12}.\ref{prp: Buehler2.12-push}, we have $X^k \in \E$ by \Cref{asn}.\ref{asn-ker} for $k \in \{0, \dots, l-1\}$. Hence, $X \in \Fac^\E_{l+1}(\omega)$ due to \Cref{rmk: Fac-cok-A/omega} applied to the short exact sequence
	\[\begin{tikzcd}[sep={30mm,between origins}] X^0 \ar[r, tail, "j_X^l \, = \, \alpha^{l-1} \cdots \alpha^0"] & X^l \ar[r, two heads, "\sigma"] & V^l, \end{tikzcd}\]
	see \Cref{prp: Buehler2.12}.\ref{prp: Buehler2.12-pull}. It remains to construct the dotted isomorphisms: Due to the sequence, $j_X^l$ is a kernel of $\sigma$. This yields an isomorphism $\tilde f=(f^0,\id_{X^l}) \colon \kappa \cong j_X^l$, where $f^0 \colon Y^0 \cong X^0$. It follows that $(\nu^l(f^0), \id_X) \colon \tilde j \cong j_X$ is an isomorphism. Since $(\tilde j, \tilde q)$ and $(j_X, q_X)$ are short exact sequences in $\mMor_l(\A)$, there is then an isomorphism $(\id_X, \iota g) \colon \tilde q \cong q_X$, where $g \colon V \cong \cok_l(X)$. Hence, we obtain the desired isomorphism $(\tilde f,g)\colon ((\kappa, \sigma), V) \cong ((j^l_X, q^l_X), \cok_l(X))$.\\
	All of the previous arguments are unaffected by imposing the condition $X^l \in \Proj(\A)$ on all occurring objects. The claims on the restricted functors follow.
\end{proof}

\section{Categorical Eisenbud--Yoshino Theorem}

In this section, we formulate and prove our categorical version of the matrix factorization theorem due to Eisenbud and Yoshino. 

\begin{prp} \label{prp: cok-restricted}
	Let $(\tau, \omega)$ be a twisted homothety on an exact category $\A$ and $l \in \NN$. Consider fully exact subcategories $\E = \tau \E$ and $\E_\omega$ of $\A$ and $\A/\omega$, respectively. Suppose that \Cref{asn}.\ref{asn-coker} holds. Then the restricted functor $\cok \colon {}^0{\Fac}^\E_{l+1}(\omega) \to \mMor_{l-1}(\E_\omega)$ is full and extension-injective. Under \Cref{asn}.\ref{asn-ker}, it is extension-bijective. If, in addition, $\E$ has enough $\A$-projectives and \Cref{asn}.\ref{asn-epic} holds, it is also essentially surjective.
\end{prp}

\begin{proof}
	By \Cref{cor: cok-L}, it suffices to show the claims for the restricted functor $L \colon {}^0{\L}^\E_{l+1}(\omega) \to \mMor_{l-1}(\E_\omega)$ from \Cref{ntn: L}. To prove \textit{fullness}, consider a morphism $g \colon U \to V$ in $\mMor_{l-1}(\E_\omega)$ for $((\iota,\rho), U), ((\kappa,\sigma), V) \in {}^0\L^\E_{l+1}(\omega)$, where $\iota \colon \monic{X^0}{X^l}$ and $\kappa \colon \monic{Y^0}{Y^l}$. Using that $X^l \in \Proj(\A)$, we obtain a commutative diagram
	\begin{equation*}
		\begin{tikzcd}[sep={15mm,between origins}]
			X^0 \ar[r, tail, "\iota"] \ar[d, "f^0", dashed] & X^l \ar[r, two heads, "\rho"] \ar[d, dashed, "f^l"] & U^l \ar[d, "g^l"] \\
			Y^0 \ar[r, tail, "\kappa"] & Y^l \ar[r, two heads, "\sigma"] & V^l
		\end{tikzcd}
	\end{equation*}
	in $\A$, which yields the desired preimage $(\tilde  f,g) = ((f^0, f^l), g)$ of $g$. For \emph{extension-injectivity}, consider a short exact sequence
	\begin{equation} \label{eqn: ses-2}
		\begin{tikzcd}[sep=large]
			((\iota,\rho), U) \ar[r, tail, "{((i^0, i^l), \overline i)}"] & ((\kappa,\sigma), V) \ar[r, two heads, "{((p^0, p^l), \overline p)}"] & ((\lambda,\tau), W),
		\end{tikzcd}
	\end{equation}
	in ${}^0{\L}^\E_{l+1}(\omega)$, where $\iota \colon \monic{X^0}{X^l}$, $\kappa \colon \monic{Y^0}{Y^l}$, and $\lambda \colon \monic{Z^0}{Z^l}$. Suppose that \begin{tikzcd}[cramped, sep=small] U \ar[r, tail, "\overline i"] & V \ar[r, two heads, "\overline p"] & W \end{tikzcd} splits, and pick a reverse $(\overline j, \overline q)$. Apply \Cref{lem: lift-splitting} to
	\begin{equation*} 
		\begin{tikzcd}[sep={15mm,between origins}]
			X^l \ar[r, tail, "i^l"] \ar[d, two heads, "\rho"] 
			& Y^l \ar[r, two heads, "p^l"] \ar[d, two heads, "\sigma"] 
			& Z^l \ar[d, two heads, "\tau"] \\
			U^l  \ar[r, tail, "\overline i^l"] 
			& V^l \ar[r, two heads, "\overline p^l"] 
			& W^l.
		\end{tikzcd}
	\end{equation*}
	This yields a reverse $(j^l, q^l)$ of $(i^l, p^l)$ such that the following diagram commutes, where the dotted morphisms are due to \Cref{lem: Noether}:
	\begin{equation*} 
		\begin{tikzcd}[sep={15mm,between origins}]
			X^0 \ar[d, tail, "\iota"] 
			& Y^0 \ar[l, two heads, dotted, "q^0"'] \ar[d, tail, "\kappa"] 
			& Z^l \ar[d, tail, "\lambda"] \ar[l, tail, dotted, "j^0"'] \\
			X^l \ar[d, two heads, "\rho"] 
			& Y^l \ar[l, two heads, dashed, "q^l"'] \ar[d, two heads, "\sigma"] 
			& Z^l \ar[d, two heads, "\tau"] \ar[l, tail, dashed, "j^l"'] \\
			U^l  
			& V^l \ar[l, two heads, "\overline q^l"'] 
			& W^l \ar[l, tail, "\overline j^l"'] 
		\end{tikzcd}
	\end{equation*}
	By \Cref{lem: restrict-splitting}, $(j^0, q^0)$ is a reverse of $(i^0,p^0)$. Combining $((j^0,j^l),(q^0,q^l))$ and $(\overline j, \overline q)$, then yields a reverse of \eqref{eqn: ses-2}, due to the termwise exact structure.\\
	Now suppose that \Cref{asn}.\ref{asn-ker} holds. 
	For \emph{extension-surjectivity}, apply the Horseshoe lemma, see {\cite[Thm.~12.8]{Buh10}}, to a short exact sequence $\begin{tikzcd}[cramped, sep=small]U \ar[r, tail, "\overline i"] & V \ar[r, two heads, "\overline p"] & W \end{tikzcd}$ in $\mMor_{l-1}(\E_\omega)$, where $((\iota,\rho), U), ((\lambda,\tau), W) \in {}^0{\L}^\E_{l+1}(\omega)$, $\iota \colon \monic{X^0}{X^l}$, and $\lambda \colon \monic{Z^0}{Z^l}$: We obtain an admissible epic $\sigma \colon \epic{Y^l}{V^l}$ in $\A$, where $Y^l := X^l \oplus Z^l  \in \Proj(\A) \cap \E$. Denote its kernel by $\kappa \colon \monic{Y^0}{Y^l}$. The short exact sequence $(\kappa, \sigma)$ fits into a commutative diagram
	\begin{equation*} 
		\begin{tikzcd}[sep={15mm,between origins}]
			X^0 \ar[r, tail, dotted, "i^0"] \ar[d, tail, "\iota"] & Y^0 \ar[r, two heads, dotted, "p^0"] \ar[d, tail, dashed, "\kappa"] & Z^0 \ar[d, tail, "\lambda"] \\
			X^l \ar[r, tail, dashed, "i^l"] \ar[d, two heads, "\rho"] 
			& Y^l \ar[r, two heads, dashed, "p^l"] \ar[d, two heads, dashed, "\sigma"] 
			& Z^l \ar[d, two heads, "\tau"] \\
			U^l  \ar[r, tail, "\overline i^l"] 
			& V^l \ar[r, two heads, "\overline p^l"] 
			& W^l,
		\end{tikzcd}
	\end{equation*}
	with $i^l = \begin{pmatrix} 1 & 0 \end{pmatrix}$ and $p^l = \begin{pmatrix} 0 \\ 1 \end{pmatrix}$, where the dotted short exact sequence is due to \Cref{lem: Noether}. Note that $Y^0 \in \E$ by \Cref{asn}.\ref{asn-ker}.  Combining $\left((i^0,i^l), (p^0,p^l)\right)$ with $(\overline i, \overline p)$, then yields a preimage of $(\overline i, \overline p)$, due to the termwise exact structure.
	For \emph{essential surjectivity}, suppose, in addition, that $\E$ has enough $\A$-projectives and that \Cref{asn}.\ref{asn-epic} holds. Given $U \in \mMor_{l-1}(\E_\omega)$, this yields an admissible epic $\rho \colon \epic{X^l}{U^l}$ in $\A$ with $X^l \in \Proj(\A) \cap \E$. Due to \Cref{asn}.\ref{asn-ker}, its kernel $\iota$ is a morphism in $\E$. Hence, $((\iota, \rho), U)$ is the desired preimage of $U$.
\end{proof}

\begin{ntn}
	Let $(\tau, \omega)$ be a twisted homothety on an exact category $\A$ and $l \in \NN$. Consider a fully exact subcategory $\E = \tau \E$ of $\A$. We use shorthand notations for the following two quotient categories:
	\[{\widetilde \Fac}^\E_{l+1}(\omega) := \Fac^\E_{l+1}(\omega)/\nu^l(\E)\]
	and
	\[{}^0{\widetilde \Fac}^\E_{l+1}(\omega) := {}^0{\Fac}^\E_{l+1}(\omega)/\nu^l(\Proj(\A) \cap \E).\]	
\end{ntn}

\begin{lem} \label{lem: quot}
	Let $(\tau, \omega)$ be a twisted homothety on an exact category $\A$ and $l \in \NN$. Consider a fully exact subcategory $\E = \tau \E$ of $\A$. If $\E$ has enough $\A$-projectives, then the canonical functor \[{}^0{\widetilde \Fac}^\E_{l+1}(\omega) \to {\widetilde \Fac}^\E_{l+1}(\omega)\] is fully faithful.
\end{lem}

\begin{proof}
	Fullness is clear. To prove faithfulness, consider an object $X \in {}^0{\Fac}^\E_{l+1}(\omega)$ and a morphism $g \colon X \to \nu^l(A)$ in $\Fac^\E_{l+1}(\omega)$ for some $A \in \E$. By assumption, there is an admissible epic $p\colon \epic{P}{A}$ in $\E$ with $P \in \Proj(\A)$. Use that $X^l \in \Proj(\A)$ to obtain a lift $\hat g^l \colon X^l \to P$ of $g^l$ along $p$. Under the adjunction from \Cref{lem: nu-adjunction}.\ref{lem: nu-adjunction-right}, it corresponds to a lift $\hat g \colon X \to \nu^l(P)$ of $g$ along $\nu^l(p)$:
	\begin{equation*}
		\begin{tikzcd}[sep={12.5mm,between origins}]
			X \ar[rd, dashed, "\hat g"] \ar[dd, "g"'] && X^{0} \ar[rr, tail, "\alpha^{0}"] \ar[rd, dashed, "\hat g^{0}"] \ar[dd, "g^{0}"' near start] && X^{1} \ar[rr, tail, "\alpha^{1}"] \ar[rd, dashed, "\hat g^{1}"] \ar[dd, "g^{1}"' near start] && \cdots \ar[rr, tail, "\alpha^{l-2}"] && X^{l-1} \ar[rr, tail, "\alpha^{l-1}"] \ar[rd, dashed, "\hat g^{l-1}"] \ar[dd, "g^{l-1}"' near start] && X^{l} \ar[dd, "g^{l}"' near start] \ar[rd, dashed, "\hat g^{l}"] \\
			& \nu^l(P) \ar[ld, "\nu^l(p)"] && P \ar[rr, equal, crossing over] \ar[ld, two heads, "p"] && P \ar[rr, equal, crossing over] \ar[ld, two heads, "p"] && \cdots \ar[rr, equal, crossing over] && P \ar[rr, equal, crossing over] \ar[ld, two heads, "p"] && P \ar[ld, two heads, "p"] \\
			\nu^l(A) && A \ar[rr, equal] && A \ar[rr, equal] && \cdots \ar[rr, equal] && A \ar[rr, equal] && A 
		\end{tikzcd}
	\end{equation*}
	It follows that a morphism factors through an object of $\nu^l(\Proj(\E))$ if it factors through an object of $\nu^l(\E)$.
\end{proof}

\begin{prp} \label{prp: coker-induced}
	Let $(\tau, \omega)$ be a twisted homothety on an exact category $\A$ and $l \in \NN$. Consider fully exact subcategories $\E = \tau \E$ and $\E_\omega$ of $\A$ and $\A/\omega$, respectively. Suppose that \Cref{asn}.\ref{asn-coker} holds. Then the cokernel induces a faithful functor
	\[{\widetilde \Fac}^\E_{l+1}(\omega) \xlongrightarrow{\cok} \mMor_{l-1}(\E_\omega).\]
	In particular, its restriction to ${}^0{\widetilde \Fac}^\E_{l+1}(\omega)$ is faithful if $\E$ has enough $\A$-projectives.
\end{prp}

\begin{proof}
	The subcategory $\N := \{((\id_A,0), 0) \mid A \in \E\}$ of $\L^\E_{l+1}(\omega)$ is the full image of $\nu^l(\E)$ under the fully faithful functor $\Fac^\E_{l+1}(\omega) \hookrightarrow \L^\E_{l+1}(\omega)$ from \Cref{prp: comma-equiv}. Therefore, it induces a fully faithful functor ${\widetilde \Fac}^\E_{l+1}(\omega) \hookrightarrow \L^\E_{l+1}(\omega)/\N$, see \Cref{lem: equivalence-quotient}. The functor $L$ from \Cref{ntn: L} sends the objects of $\N$ to zero, and thus induces a functor $\L^\E_{l+1}(\omega)/\N \to \mMor_{l-1}(\E_\omega)$.
	These functors fit into a commutative diagram
	\begin{equation*}
		\begin{tikzcd}[row sep={17.5mm,between origins}, column sep={35mm,between origins}]
			\N \ar[r, hook] & \L^\E_{l+1}(\omega) \ar[r] & \L^\E_{l+1}(\omega)/\N \ar[r, "L"]& \mMor_{l-1}(\E_\omega)\\
			\nu^l(\E) \ar[r, hook] \ar[u, "\simeq"] & \Fac^\E_{l+1}(\omega) \ar[u, hook] \ar[r] & {\widetilde \Fac}^\E_{l+1}(\omega) \ar[u, hook] \ar[r, "\cok"] & \mMor_{l-1}(\E_\omega). \ar[u, equal]
		\end{tikzcd}
	\end{equation*}
	Due to the right square, it suffices to prove the claim for $L$ instead of $\cok$.
	Consider a morphism $(\tilde f,g) \colon ((\iota,\rho), U) \to ((\kappa,\sigma), V)$ in $\L^\E_{l+1}(\omega)$, where $\tilde f = (f^0, f^l)$, $\iota \colon \monic{X^0}{X^l}$, and $\kappa \colon \monic{Y^0}{Y^l}$. Supposing that $g=0$, the morphism $f^l$ factors through $\kappa$ as follows:
	\begin{equation*}
		\begin{tikzcd}[sep={17.5mm,between origins}]
			X^0 \ar[r, tail, "\iota"] \ar[d, "f^0"] & X^l \ar[r, two heads, "\rho"] \ar[d, "f^l"] \ar[ld, dashed, "\hat f^l"] & U^l \ar[d, "g^l \, = \, 0"] \\
			Y^0 \ar[r, tail, "\kappa"] & Y^l \ar[r, two heads, "\sigma"] & V^l
		\end{tikzcd}
	\end{equation*}
	Then $\tilde f=(f^0,f^l)=(\id_{Y^0}, \kappa) \circ (f^0, \hat f^l)$ factors though $\id_{Y^0}$:
	\begin{equation*}
		\begin{tikzcd}[sep={17.5mm,between origins}]
			X^0 \ar[r, tail, "\iota"] \ar[d, "f^0"] & X^l  \ar[d, "\hat f^l"] \ar[dd, bend left=15mm, "f^l"] \\
			Y^0 \ar[r, equal] \ar[d, equal] & Y^0 \ar[d, tail, "\kappa"] \\
			Y^0 \ar[r, tail, "\kappa"] & Y^l,
		\end{tikzcd}
	\end{equation*}
	It follows that $(\tilde f,g)$ factors through $((\id_{Y^0},0), 0) \in \N$. The particular claim is due to \Cref{lem: quot}.\qedhere
\end{proof}

Combining \Cref{prp: homomorphism-thm-reversed,prp: cok-restricted,prp: coker-induced} yields

\begin{cor} \label{cor: Fac-mMor-exact}
	Let $(\tau, \omega)$ be a twisted homothety on an exact category $\A$ and $l \in \NN$. Consider fully exact subcategories $\E = \tau \E$ and $\E_\omega$ of $\A$ and $\A/\omega$, respectively. Suppose that $\E$ has enough $\A$-projectives and that \Cref{asn} holds. Then the cokernel functor induces an equivalence of exact categories
	\[{}^0{\widetilde \Fac}^\E_{l+1}(\omega) \xlongrightarrow[\simeq]{\cok} \mMor_{l-1}(\E_\omega),\]
	where ${}^0{\widetilde \Fac}^\E_{l+1}(\omega)$ carries the image exact structure induced by the functor  ${}^0{\Fac}^\E_{l+1}(\omega) \to {}^0{\widetilde \Fac}^\E_{l+1}(\omega)$, which is then extension-bijective. \qed
\end{cor}

\begin{thm} \label{thm: coker-equiv}
	Let $(\tau, \omega)$ be a twisted homothety on an exact weakly idempotent complete category $\A$ and $l \in \NN$. Consider fully exact Frobenius subcategories $\E = \tau \E$ and $\E_\omega$ of $\A$ and $\A/\omega$, respectively. Suppose that $\E$ has enough $\A$-projectives, that $(\tau, \omega)$ is regular on $\E$, and that \Cref{asn} holds.
	Then the cokernel functor induces a triangle equivalence
	\[{}^0{\ul \Fac}^\E_{l+1}(\omega) = {}^0\Fac^\E_{l+1}(\omega) / \langle \nu^k(P) \, \vert \, k \in \{0, \dots, l\}, P \in \Proj(\E) \rangle \xrightarrow[\simeq]{\cok} \ul \mMor_{l-1}(\E_\omega).\]
\end{thm}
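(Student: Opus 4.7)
The plan is to verify that $\cok$ sends projective-injectives to projective-injectives so that it descends by \Cref{prp: stable-functor-triang} to a triangle functor $\ul\cok\colon\ul\Fac^\E_{l+1}(\omega)\to\ul\mMor_{l-1}(\E_\omega)$, and then to check that this induced functor is essentially surjective, full, and faithful.

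By \Cref{prp: Fact-enough}.\ref{prp: Fact-enough-subFrob} and \Cref{thm: mMor}.\ref{thm: mMor-Proj}, the first step reduces to computing $\cok(\nu^k(P))$ for $P\in\Proj(\E)$. Unwinding the successive pushouts in \Cref{con: coker} gives $\cok(\nu^l(P))=0$ and $\cok(\nu^k(P))=\mu_{l-k}(\ol P)$ for $k<l$. The hypotheses $\Proj(\E)\subseteq\Proj(\A)$ and regularity of $(\tau,\omega)$ on $\E$ put \Cref{lem: Proj-A/omega} in force to yield $\ol P\in\Proj(\A/\omega)$, while \Cref{asn}.\ref{asn-coker} applied to $\tau^{-1}P\rightarrowtail P\twoheadrightarrow\ol P$ gives $\ol P\in\E_\omega$; full exactness of $\E_\omega\subseteq\A/\omega$ then promotes $\ol P$ to $\Proj(\E_\omega)$, so $\mu_{l-k}(\ol P)\in\smMor_{l-1}(\Proj(\E_\omega))=\Proj(\mMor_{l-1}(\E_\omega))$. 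Essential surjectivity and fullness of $\ul\cok$ transfer immediately from the corresponding parts of \Cref{prp: coker-induced} by choosing representatives.

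The main obstacle will be faithfulness, for which I would first establish the auxiliary fact that every $Q\in\Proj(\E_\omega)$ is a direct summand of $\ol P$ for some $P\in\Proj(\E)$. Starting from \Cref{asn}.\ref{asn-epic} combined with enough projectives in $\E$, I obtain an admissible epic $P\twoheadrightarrow Q$ in $\A$ with $P\in\Proj(\E)$; naturality of $\omega$ together with $\omega_Q=0$ forces this map to factor through $\ol P$, and \Cref{prp: WIC} promotes the resulting $\ol P\to Q$ to an admissible epic in $\E_\omega$, which then splits by projectivity of $Q$. Consequently, every projective-injective of $\mMor_{l-1}(\E_\omega)$ is a direct summand of some $\cok(Z)$ with $Z$ projective-injective in $\Fac^\E_{l+1}(\omega)$.

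To conclude faithfulness, suppose $\ul\cok(f)=0$ for $f\colon X\to Y$. Then $\cok(f)$ factors through $\cok(Z)$ for some projective-injective $Z$, and fullness from \Cref{prp: coker-induced}.\ref{prp: coker-induced-full} lifts this factorization to a morphism $f'\colon X\to Z\to Y$ satisfying $\cok(f-f')=0$. \Cref{prp: coker-induced}.\ref{prp: coker-induced-faithful} then shows that $f-f'$ factors through some $\nu^l(A)$ with $A\in\E$. Picking a projective cover $p\colon P\twoheadrightarrow A$ in $\E$, the adjunction $\pi^l\dashv\nu^l$ of \Cref{lem: nu-adjunction}.\ref{lem: nu-adjunction-right} turns the lifting of $X\to\nu^l(A)$ along $\nu^l(p)$ into the lifting of $X^l\to A$ along $p$, which is possible because $\Ext^1_\A(\E,\E)=0$ applied to $\ker p\rightarrowtail P\twoheadrightarrow A$. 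This absorbs $f-f'$ into $\langle\nu^k(P)\rangle$, so $f=0$ in $\ul\Fac^\E_{l+1}(\omega)$, completing the equivalence.
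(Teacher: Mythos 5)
Your overall strategy is genuinely different from the paper's. The paper works at the level of ideals: it starts from the equivalence $\Fac^\E_{l+1}(\omega)/\nu^l(\Proj(\E)) \xrightarrow{\simeq} \mMor_{l-1}(\E_\omega)$ obtained from \Cref{prp: coker-induced} together with \Cref{lem: quot}, shows this equivalence carries $\mathcal I=\langle \nu^k(P):k<l\rangle$ onto $\mathcal J=\langle\mu_k(\ol P)\rangle$, and then invokes \Cref{lem: iterated-quotient} and \Cref{lem: equivalence-quotient} to pass to the stable categories. You instead verify directly that $\cok$ preserves projective-injectives (so descends to a triangle functor by \Cref{prp: stable-functor-triang}), then check essential surjectivity, fullness, and faithfulness of the descended functor. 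The ingredients are the same --- the computation of $\cok(\nu^k(P))$, \Cref{prp: coker-induced}, the adjunction $\pi^l\dashv\nu^l$, and \Cref{lem: Ext-lift} --- and in fact your lifting argument in the faithfulness step is precisely the content of the paper's \Cref{lem: quot}.\ref{lem: quot-equal}. Your route is more hands-on and arguably more transparent about what must be shown; the paper's is cleaner because the ideal bookkeeping is isolated in self-contained lemmas.

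There is, however, a gap in your auxiliary fact that every $Q\in\Proj(\E_\omega)$ is a direct summand of some $\ol P$. You write that \Cref{prp: WIC} ``promotes the resulting $\ol P\to Q$ to an admissible epic in $\E_\omega$,'' but \Cref{prp: WIC} only makes $\ol P\to Q$ an admissible epic in $\A$ (equivalently, in $\A/\omega$). Since $\E_\omega$ is fully exact in $\A/\omega$, this morphism is admissible in $\E_\omega$ only if its kernel $K$ lies in $\E_\omega$, and neither \Cref{asn}.\ref{asn-coker} nor \Cref{asn}.\ref{asn-ker} applies directly to the sequence $K\rightarrowtail\ol P\twoheadrightarrow Q$, because its middle term $\ol P$ lies in $\E_\omega$ rather than $\E$. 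Closing this requires the Noether-lemma argument of \Cref{lem: epic-overline-P}: take the kernel $i\colon Y\rightarrowtail P$ of $P\twoheadrightarrow Q$, use \Cref{asn}.\ref{asn-ker} to get $Y\in\E$, and then identify $K$ with the cokernel of $\tau^{-1}P\rightarrowtail Y$, which lands in $\E_\omega$ by \Cref{asn}.\ref{asn-coker}. Only then is $Q$ projective with respect to the epic $\ol P\twoheadrightarrow Q$, which gives the splitting you want. With that repair, the rest of your proof goes through.
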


\begin{proof}
	Both stable categories are triangulated and the equality holds due to \Cref{thm: stable-Frobenius,thm: mMor} and \Cref{prp: Fact-enough}. By \Cref{cor: Fac-mMor-exact}, there is an equivalence
	\begin{gather*}
		{}^0{\widetilde\Fac}^\E_{l+1}(\omega)  \xrightarrow[\simeq]{\cok} \mMor_{l-1}(\E_\omega),
	\end{gather*}
	which sends $\cI := \langle \nu^k(P) \, \vert \, k \in \{0, \dots, l-1\}, P \in \Proj(\E) \rangle$ onto $\mathcal J := \langle \mu_k(\ol P) \, \vert \, k \in \{1, \dots, l\}, P \in \Proj(\E) \rangle$, see \Cref{ntn: overline-omega} and \Cref{rmk: tau-restricted}. Due to \Cref{lem: Proj-A/omega} and \Cref{asn}.\ref{asn-coker}, $\ol P \in \Proj(\A/\omega) \cap \E_\omega \subseteq \Proj(\E_\omega)$ for any $P \in \Proj(\E)= \E \cap \Proj(\A)$, see \Cref{lem: sub-Frobenius}.\ref{lem: sub-Frobenius-a}. Hence, $\mathcal J \subseteq \Proj(\mMor_{l-1}(\E_\omega))$ by \Cref{thm: mMor}.\ref{thm: mMor-Proj}, and $\mMor_{l-1}(\E_\omega) / \mathcal J = \ul \mMor_{l-1}(\E_\omega)$ due to \Cref{con: mMor-enough} and \Cref{lem: epic-overline-P}. By \Cref{prp: Fact-enough} and \Cref{lem: iterated-quotient}, ${}^0{\widetilde\Fac}^\E_{l+1}(\omega)/\cI \cong {}^0{\ul\Fac}^\E_{l+1}(\omega)$. By \Cref{lem: equivalence-quotient}, the induced functor
	\[{}^0{\ul\Fac}^\E_{l+1}(\omega) \xrightarrow[\simeq]{\cok} \ul \mMor_{l-1}(\E_\omega)\]
	is an equivalence and triangulated due to \Cref{lem: coker-E_omega} and \Cref{prp: stable-functor-triang}. Its quasi-inverse is automatically a triangle functor, see {\cite[Prop.~1.4]{BK89}} for a more general statement.
\end{proof}

\begin{lem} \label{lem: epic-overline-P}
	Let $(\tau, \omega)$ be a twisted homothety on an exact weakly idempotent complete category $\A$, $\E = \tau \E$ and $\E_\omega$ fully exact subcategories of $\A$ and $\A/\omega$, respectively. Suppose that $(\tau, \omega)$ is regular on $\E$, that $\E$ has enough projectives and that \Cref{asn} holds. Then each $X \in \E_\omega$ admits an admissible epic $\epic{\ol P}{X}$ in $\E_\omega$ with $P \in \Proj(\E)$.
\end{lem}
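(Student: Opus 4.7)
The plan is to first construct an admissible epic $p \colon P \twoheadrightarrow X$ in $\A$ with $P \in \Proj(\E)$, and then to factor $p$ through the canonical quotient $\ol\omega_P \colon P \twoheadrightarrow \ol P$ from \Cref{ntn: overline-omega}. For the first step, \Cref{asn}.\ref{asn-epic} supplies an admissible epic $Y \twoheadrightarrow X$ in $\A$ with $Y \in \E$, and the enough-projectives hypothesis on $\E$ covers $Y$ by an admissible epic $P \twoheadrightarrow Y$ with $P \in \Proj(\E)$. Composing yields the desired $p$.

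To obtain the factorization, I apply naturality of $\omega$ to $\tau^{-1}(p) \colon \tau^{-1}P \to \tau^{-1}X$, giving $p \circ \omega_{\tau^{-1}P} = \omega_{\tau^{-1}X} \circ \tau^{-1}(p) = 0$, where the last equality uses $\tau^{-1}X \in \A/\omega$ (\Cref{rmk: tau-A/omega}). Consequently, $p = \bar p \circ \ol\omega_P$ for a unique $\bar p \colon \ol P \to X$, and \Cref{prp: WIC} forces $\bar p$ to be an admissible epic in $\A$. Moreover, \Cref{asn}.\ref{asn-coker} applied to $\tau^{-1}P \rightarrowtail P \twoheadrightarrow \ol P$, combined with $\tau^{-1}P \in \tau^{-1}\E = \E$, establishes $\ol P \in \E_\omega$.

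The main obstacle is to upgrade $\bar p$ from an admissible epic in $\A$ to one in $\E_\omega$, i.e., to verify that its kernel $K_{\bar p}$ lies in $\E_\omega$. Let $K_p$ denote the kernel of $p$. Since $P \in \E$ and $X \in \E_\omega$, \Cref{asn}.\ref{asn-ker} applied to $K_p \rightarrowtail P \twoheadrightarrow X$ yields $K_p \in \E$. I then invoke the Noether lemma (\Cref{lem: Noether}) on the commutative $3 \times 3$ diagram with rows $\tau^{-1}P \rightarrowtail \tau^{-1}P \twoheadrightarrow 0$, $K_p \rightarrowtail P \twoheadrightarrow X$, and $K_{\bar p} \rightarrowtail \ol P \twoheadrightarrow X$, middle column $\tau^{-1}P \rightarrowtail P \twoheadrightarrow \ol P$, and right column $0 \rightarrowtail X \twoheadrightarrow X$; this produces the short exact sequence $\tau^{-1}P \rightarrowtail K_p \twoheadrightarrow K_{\bar p}$ as the remaining column. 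A second application of \Cref{asn}.\ref{asn-coker} to this sequence, using $K_{\bar p} \in \A/\omega$ (\Cref{lem: A/omega-fully-exact}), then gives $K_{\bar p} \in \E_\omega$. Since $\E_\omega$ is fully exact in $\A/\omega$, the sequence $K_{\bar p} \rightarrowtail \ol P \twoheadrightarrow X$ is short exact in $\E_\omega$, exhibiting $\bar p$ as the required admissible epic.
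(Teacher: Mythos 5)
Your proof is correct and follows essentially the same route as the paper's: cover $X$ by $P\in\Proj(\E)$, factor $p$ through $\ol\omega_P$ to get $\bar p$, and use the Noether/$3\times 3$ lemma together with \Cref{asn} to identify the kernel of $\bar p$ as an object of $\E_\omega$. Two observations on the details. First, your invocation of \Cref{lem: Noether} is in the \emph{dual} orientation: the paper's statement produces the \emph{right} column from the rows and the left and middle columns, whereas you want to induce the \emph{left} column from the rows and the middle and right columns. This form is valid (apply the paper's lemma in $\A^{\op}$ after a $180^\circ$ flip of the grid), but it is not literally what \Cref{lem: Noether} says, so you should either note the dualization or set the diagram up as the paper does: take $j\colon \tau^{-1}P\to K_p$ induced by $p\omega_{\tau^{-1}P}=0$, observe $j$ is an admissible monic by \Cref{prp: WIC}.\ref{prp: WIC-comp-monic}, set $Z:=\cok(j)$, and apply the stated form of the Noether lemma with top row $\tau^{-1}P\rightarrowtail K_p\twoheadrightarrow Z$ to produce the right column $Z\rightarrowtail\ol P\twoheadrightarrow X$; this simultaneously yields admissibility of $\bar p$ and identifies $Z=K_{\bar p}$. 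Second, your argument that $K_{\bar p}\in\A/\omega$ via \Cref{lem: A/omega-fully-exact} applied to $K_{\bar p}\rightarrowtail\ol P\twoheadrightarrow X$ is cleaner than the paper's, which instead checks that $\omega_{\tau^{-1}Y}$ factors through $j$ and cites \Cref{lem: A/omega-coker}; both are fine, and yours saves a small computation.
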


\begin{proof}
	Since \Cref{asn}.\ref{asn-epic} holds and $\E$ has enough projectives, for each $X \in \E_\omega$, there is an admissible epic $p \colon \epic{P}{X}$ in $\A$ with $P \in \Proj(\E)$. With $i\colon \monic{Y}{P}$ denoting its kernel, $Y$ lies in $\E$ due to \Cref{asn}.\ref{asn-ker}. By \Cref{lem: A/omega-coker}, $\omega_{\tau^{-1}P}=i j$ for a morphism $j \colon \monic{\tau^{-1} P}{Y}$, which is a monic in $\A$, see \Cref{prp: WIC}.\ref{prp: WIC-comp-monic}. With $Z$ denoting its cokernel, \Cref{lem: Noether} then yields a commutative diagram
	\begin{equation} \label{eqn: epic-overline-P}
		\begin{tikzcd}[sep={17.5mm,between origins}]
			\tau^{-1} P \ar[r, tail, "j"] \ar[d, equal] & Y \ar[r, two heads] \ar[d, tail, "i"] & Z \ar[d, tail, dashed]\\
			\tau^{-1} P \ar[r, tail, "\omega_{\tau^{-1} P}"] \ar[d, two heads] & P \ar[r, two heads, "\ol \omega_{P}"] \ar[d, two heads, "p"] & \ol P \ar[d, two heads, dashed, "\ol p"]\\
			0 \ar[r, tail] & X \ar[r, equal] & X
		\end{tikzcd}
	\end{equation}
	in $\A$ with short exact rows and columns. By \Cref{asn}.\ref{asn-coker}, we have $\ol P \in \E_\omega$, see \Cref{ntn: overline-omega}. Since $i$ is monic, $i\omega_{\tau^{-1}Y}=\omega_{\tau^{-1}P} \tau^{-1}(i)=ij \tau^{-1}(i)$ implies that $\omega_{\tau^{-1}Y} = j \tau^{-1}(i)$ factors through $j$, and hence $Z \in \A/\omega$ due to \Cref{lem: A/omega-coker}. By \Cref{asn}.\ref{asn-coker} applied to the upper row of \eqref{eqn: epic-overline-P}, this means that $Z \in \E_\omega$, which makes $\ol p$ the desired epic.
\end{proof}

We include the following statements for lack of reference:

\begin{lem} \label{lem: iterated-quotient}
	Let $\A$ be a category. Given any two subcategories $\cS$ and $\T$ of $\A$, closed under biproducts, there is a commutative diagram
	\begin{equation*} 
		\begin{tikzcd}[row sep={17.5mm,between origins}, column sep={35mm,between origins}]
			\A \ar[r]\ar[d] & \A/\cS \ar[d] \\
			\A/ \langle \cS \cup \T \rangle \ar[r, dashed, "\cong"] & (\A/\cS)/\T,
		\end{tikzcd}
	\end{equation*}
	where the solid arrows denote the canonical quotient functors. \qedhere
\end{lem}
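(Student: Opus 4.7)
The plan is to invoke the universal property of the categorical quotient, which is immediate from \Cref{dfn: quot}: an additive functor out of $\A$ factors (uniquely) through $\A/\B$ if and only if it annihilates every morphism of $\A$ factoring through an object of $\B$. I apply this to the composite $Q \colon \A \to \A/\cS \to (\A/\cS)/\T$ to produce the dashed functor and then show that it is an isomorphism by comparing kernel ideals.

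First I would note that $Q$ is additive, identity on objects, and kills both (i) every morphism factoring through an object of $\cS$ (annihilated already by $\A \to \A/\cS$) and (ii) every morphism factoring through an object of $\T$ (annihilated by $\A/\cS \to (\A/\cS)/\T$). By additivity, $Q$ therefore vanishes on every morphism of the form $rs + tu$ with $s \colon A \to S$, $r \colon S \to B$, $S \in \cS$, and $u \colon A \to T$, $t \colon T \to B$, $T \in \T$. By \Cref{rmk: factor-through-biproduct}, these are precisely the morphisms factoring through an object of $\langle \cS \cup \T \rangle$. The universal property then delivers a unique functor $F \colon \A/\langle \cS \cup \T \rangle \to (\A/\cS)/\T$ making the diagram commute.

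To see that $F$ is an isomorphism, it suffices to check that it is the identity on objects and bijective on each Hom-group. Surjectivity on Homs is automatic since both canonical quotients are full. For faithfulness, suppose $f \colon A \to B$ in $\A$ satisfies $Q(f) = 0$. Then its class $[f]$ in $\A/\cS$ factors through some $T \in \T$, so there exist $u \colon A \to T$ and $t \colon T \to B$ with $[f] = [tu]$ in $\A/\cS$. Equivalently, $f - tu$ factors through some $S \in \cS$ in $\A$, say $f - tu = rs$, so $f = rs + tu$. Via \Cref{rmk: factor-through-biproduct}, this exhibits $f$ as factoring through $S \oplus T \in \langle \cS \cup \T \rangle$, hence $f$ already vanishes in $\A / \langle \cS \cup \T \rangle$.

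Everything else is a formal unwinding of \Cref{dfn: quot}; the only (mild) obstacle is this bidirectional kernel comparison, which crucially uses \Cref{rmk: factor-through-biproduct} together with the closure of $\cS$ and $\T$ under biproducts (ensuring that the characterization of the ideal generated by $\langle \cS \cup \T \rangle$ is by sums of two factorizations rather than arbitrarily long ones).
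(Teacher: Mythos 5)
Your proof is correct and follows essentially the same strategy as the paper's: both arguments reduce to comparing the ideals killed by the two composite quotient functors, using \Cref{rmk: factor-through-biproduct} to identify morphisms factoring through $\langle \cS \cup \T \rangle$ with sums $rs+tu$. The only cosmetic difference is that the paper builds the inverse functor explicitly via the universal property in both directions and appeals to uniqueness, whereas you build only the forward functor $F$ and verify directly that it is bijective on objects and on Hom-groups; the bidirectional kernel comparison you perform is exactly the substance of the paper's two universal-property applications.
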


\begin{proof} Let $P$ denote the composition $\A \to \A/\cS \to (\A/\cS) / \T$. Since $\cS \subseteq \langle \cS \cup \T \rangle$, there is a unique functor $Q \colon \A/\cS \to \A/ \langle \cS \cup \T \rangle$, compatible with the respective quotient functors:
	\begin{equation*}
		\begin{tikzcd}[sep={15mm,between origins}]
			& \A \ar[rd] \ar[ld] \\
			\A/\cS \ar[rr, dashed, "Q"] && \A/\langle \cS \cup \T \rangle
		\end{tikzcd}
	\end{equation*}
	Now, consider a morphism $f \colon A \to B$ in $\A$ which is zero in $\A/ \langle \cS \cup \T \rangle$. By \Cref{rmk: factor-through-biproduct}, there are objects $S \in \cS$, $T \in \T$ and morphisms $r \colon S \to B$, $s \colon A \to S$, $t \colon T \to B$, and $u \colon A \to T$ such that $f=rs+tu$. Thus, the morphism $\ol f = \ol{rs} + \ol{tu} = \ol{tu}$ in $\A/\cS$ factors through $T$ and $f$ is zero in $(\A/\cS)/\T$. Hence, $P$ induces a unique functor $\overline P \colon \A/ \langle \cS \cup \T \rangle \to (\A/\cS) / \T$.\\
	Conversely, suppose that the class $\ol f$ of $f$ in $\A/ \cS$ is zero in $(\A/\cS)/\T$. This means that $\ol f = \ol{tu}$ for an object $T \in \T$ and morphisms $\ol t \colon T \to B$ and $\ol u \colon A \to T$. Then $f-tu$ is zero in $\A/\cS$, that is, $ f - tu= rs$ for an object $S \in \cS$ and morphisms $r \colon S \to B$ and $s \colon A \to S$. Thus, $ f = rs+tu$ is zero in $\A/\langle \cS \cup \T \rangle$ due to \Cref{rmk: factor-through-biproduct}. Hence, $Q$ induces a unique functor $\overline Q \colon (\A/\cS)/\T \to \A/ \langle \cS \cup \T \rangle$.
	All of these functors fit into a diagram
	\begin{equation*}
		\begin{tikzcd}[sep={20mm,between origins}]
			\A \ar[rd] \ar[rrd, "P"] \ar[rdd] & & \\
			& \A/\cS \ar[d, "Q"', near start] \ar[r] & \left(\A/\cS\right)/\T \ar[ld, "\overline Q", dashed, shift left=1mm] \\
			& \A / \langle \cS \cup \T \rangle. \ar[ru, "\overline P", dashed, shift left=1mm]
		\end{tikzcd}
	\end{equation*}
	By precomposition with the full functors $\A \to \A/\langle \cS \cup \T \rangle$ and $P$, we obtain that $\overline {QP}$ and $\overline{PQ}$ are the identity, respectively, and the claim follows.
\end{proof}

\begin{lem} \label{lem: equivalence-quotient}
	Let $F \colon \A \hookrightarrow \B$ be a fully faithful functor, $\cS$ a subcategory of $\A$, closed under biproducts, and $\T$ its full image in $\B$. Then the induced functor $\ol F \colon \A/\cS \to \B/\T$ between quotient categories remains fully faithful, and is an equivalence if $F$ is so.
\end{lem}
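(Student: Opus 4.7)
The plan is to verify the three claims (well-definedness, fully faithfulness, essential surjectivity) directly, exploiting the fact that $F$ is additive so that $\T=F(\cS)$ is automatically closed under biproducts and that $F$ preserves the property of a morphism factoring through a chosen object.

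First I would check that $\ol F$ is well-defined: an additive functor sends a morphism of the form $f = rs \colon A \to S \to A'$ with $S \in \cS$ to $Ff = (Fr)(Fs)$ factoring through $FS \in \T$. Hence $F$ descends to a functor $\ol F\colon \A/\cS \to \B/\T$ on the quotients, which clearly acts as the identity on objects via the identification of objects of a quotient with those of the original category.

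Fullness of $\ol F$ is immediate from fullness of $F$: given a class $\bar g \in \Hom_{\B/\T}(FA,FA')$ represented by $g\colon FA \to FA'$ in $\B$, choose a preimage $f \colon A \to A'$ with $Ff=g$, then $\ol F(\bar f)=\bar g$. For faithfulness, I would take $f \colon A \to A'$ with $\ol F(\bar f) = 0$, which means $Ff$ factors through some object of $\T$. Since $\T$ by definition has objects $FS$ with $S\in\cS$, this factorization reads $Ff = rs$ with $s\colon FA \to FS$ and $r\colon FS \to FA'$. By full faithfulness of $F$, lift $r$ and $s$ uniquely to $r'\colon S \to A'$ and $s'\colon A \to S$ in $\A$. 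Then $F(r's') = rs = Ff$, and faithfulness of $F$ yields $f=r's'$. Thus $f$ factors through $S \in \cS$, so $\bar f = 0$ in $\A/\cS$.

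Finally, for essential surjectivity: assume $F$ is essentially surjective. For any object $B \in \B = \Ob(\B/\T)$, pick $A \in \A$ with an isomorphism $\varphi \colon FA \cong B$ in $\B$; its image in $\B/\T$ remains an isomorphism, so $B$ is isomorphic to $\ol F(A)$ in $\B/\T$. The only non-routine step is the faithfulness argument, where one must use crucially that the \emph{full} image (not merely the essential image) of $\cS$ is taken so that every object of $\T$ lifts strictly to some $S\in \cS$, and then combine this with the bijection on hom-sets induced by $F$ to lift the whole factorization. The remaining steps are essentially tautological given the definition of the quotient.
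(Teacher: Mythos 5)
Your proof is correct and follows essentially the same route as the paper: the only non-routine step is faithfulness, which both you and the paper handle by observing that a zero morphism in $\B/\T$ factors through a single object $F(S)$ with $S \in \cS$, lifting the two factors along the fully faithful $F$, and then invoking faithfulness to conclude that the original morphism equals the lifted composite. Your added remarks on well-definedness and essential surjectivity are routine and consistent with the paper's brief treatment.
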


\begin{proof}
	Fullness carries over immediately. For faithfulness, consider a morphism $f \colon A \to B$ in $\A$ such that $F(f)$ factors through an object $F(S) \in \T$, where $S \in \cS$:
	\begin{equation*}
		\begin{tikzcd}[sep={15mm,between origins}]
			F(A) \ar[rr, "F(f)"] \ar[rd, "u"] && F(B) \\
			& F(S) \ar[ru, "t"]
		\end{tikzcd}
	\end{equation*}
	Since $F$ is full, there are morphisms $r \colon S \to B$ and $s \colon A \to S$ such that $t=F(r)$ and $u=F(s)$. Then $F(f)=F(r)F(s)=F(rs)$ implies $f=rs$ since $F$ is faithful. The claim on essential surjectivity is obvious.
\end{proof}


\printbibliography

\end{document}